\documentclass[a4paper,11pt]{amsart}

\usepackage{amssymb,amsthm}  
\usepackage{mathtools}      
\usepackage{mathabx}        
\usepackage[bb=fourier,cal=euler,scr=rsfs]{mathalfa}	
\usepackage{enumitem}       
\usepackage[colorlinks=true]{hyperref}
\usepackage[ansinew]{inputenc}

\numberwithin{equation}{section}     

\setlist[enumerate,1]{label={\upshape(\roman*)},ref=\roman*}
\setlist[enumerate,2]{label={\upshape(\alph*)},ref=\alph*}

\def\QQ{{\mathbb Q}} \def\RR{{\mathbb R}}  \def\TT{{\mathbb T}}

\def\cA{\mathcal{A}}  \def\cG{\mathcal{G}}  
\def\cB{\mathcal{B}}    
\def\cC{\mathcal{C}}  \def\cI{\mathcal{I}} \def\cO{\mathcal{O}} \def\cU{\mathcal{U}}
\def\cD{\mathcal{D}}    
    \def\cW{\mathcal{W}}

\newtheorem*{teo*}{Theorem}

\newtheorem*{teoA}{Theorem A}
\newtheorem*{teoB}{Theorem B}

\newtheorem*{teoC}{Theorem C}
\newtheorem{teo}{Theorem}[section]

\newtheorem*{conj*}{Conjecture}
\newtheorem{quest}{Question}
\newtheorem{cor}[teo]{Corollary}
\newtheorem*{corD}{Corollary D}
\newtheorem*{af}{Claim}
\newtheorem{lema}[teo]{Lemma}
\newtheorem{prop}[teo]{Proposition}

\newcommand{\bi}{\begin{itemize}}
\newcommand{\ei}{\end{itemize}}

\theoremstyle{definition}
\newtheorem{defi}[teo]{Definition}
\theoremstyle{remark}
\newtheorem{obs}[teo]{Remark}
\newtheorem{ej}[teo]{Example}

\newcommand{\eps}{\varepsilon}

\DeclareMathOperator{\PH}{PH}

\author[S. Crovisier]{Sylvain Crovisier}
\author[R. Potrie]{Rafael Potrie}

\title[Robust transitivity versus trapping regions]{Robust transitivity \emph{versus} trapping regions\\ for partially hyperbolic diffeomorphisms}

\thanks{S.C. was partially supported by LIA-IFUMI and the ERC project 692925 \emph{NUHGD}. 
R.P. was partially supported by CSIC.}

\begin{document}
\maketitle

\begin{abstract}
We show that among partially hyperbolic diffeomorphisms with one dimensional center there is a $C^1$-open and dense subset for which either there is a proper quasi-attractor or both strong foliations are minimal. The same result holds among volume-preserving partially hyperbolic diffeomorphisms and has consequences about robust transitivity beyond the conservative setting. The proofs involve a careful study of minimal $\cW^u$-saturated sets and interact with recent results in the subject. 
\bigskip

\noindent
{\bf Keywords:} Differentiable dynamics, partial hyperbolicity, robust transitivity. 
\medskip

\noindent {\bf MSC 2010:} 37C70, 37C20, 37D30.   
\end{abstract}

\section{Introduction}\label{SectionIntroduction}
An important property of a topological dynamical system is \emph{transitivity}. This asserts that the dynamics is strongly indecomposable: there exists a point whose forward orbit is dense in the entire space.
In this paper we study differentiable dynamics, so the dynamical system will be given by a diffeomorphism $f$ of a closed manifold $M$ (i.e. compact, connected, boundaryless).

An obvious obstruction to transitivity is the existence of a \emph{trapping region}: a non-empty open subset $U \neq M$ which satisfies $f(\overline{U}) \subset U$. For $C^1$-generic systems, thanks to Conley's theory and the connecting lemma for pseudo-orbits of \cite{BC-rec} one knows the following property:
\bigskip

\noindent
{\bf Generic dichotomy.} \emph{There exists a dense $G_\delta$ subset $\cG \subset \mathrm{Diff}^1(M)$ such that every $f \in \cG$ either is transitive or admits a trapping region.}
\bigskip

We remark that having a trapping region is an open property (even in the $C^0$-topology) but transitivity may be fragile (for a local version of this, see \cite{BCGP}). It is worth noticing that a stronger conclusion can be achieved, promoting transitivity to topological mixing, a stronger indecomposability property (see \cite{AC}). Existence of a trapping region is equivalent to the existence of a proper \emph{quasi-attractor} (see \S\ref{s.prelim}). In fact, every proper quasi-attractor is contained in a trapping region.

Having a result which is not robust has the drawback that in occasions, one is lead to work with other classes of diffeomorphisms which may be meager in $\mathrm{Diff}^1(M)$ (e.g. $C^2$-diffeomorphisms are meager in $\mathrm{Diff}^1(M)$, volume-preserving ones too, etc..). For this reason, one typically aims to promote generic properties to open and dense ones. The purpose of this paper is to make progress in this direction. 

It is well known that $C^1$-generic transitive diffeomorphisms admit weak forms of hyperbolicity (see \cite{BDP}) so it makes sense to restrict attention to diffeomorphisms having some weak form of hyperbolicity. Here, we will work with the strongest possible form beyond uniform hyperbolicity and obtain a quite satisfactory picture. 

We consider the (open) subset $\PH^1_{c=1}(M)$ of $C^1$-diffeomorphisms $f$ of $M$
which preserve a \emph{partially hyperbolic} decomposition, with a one-dimensional center.
More precisely, they preserve a splitting $T M = E^s \oplus E^c \oplus E^u$, $\dim(E^c)=1$, with the property that for some integer $\ell >0$ and for every unit vector  $v^\bullet \in E^\bullet_x$ (where $\bullet=s,c,u$) we have:
\begin{equation*}
\|Df_x^\ell v^s \| < \min \{1, \|Df_x^\ell v^c \|\} \leq \max \{ 1, \|Df_x^\ell v^c \| \} < \|Df_x^\ell v^u \|.
\end{equation*}
Our main result is:

\begin{teoA}
There exists an open and dense subset $\cO\subset \PH^1_{c=1}(M)$ such that if $f \in \cO$ then either $f$ is robustly transitive or there exists a non-empty open subset $U\subsetneq M$ such that $f(\overline U) \subset U$. 
\end{teoA}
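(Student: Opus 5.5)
The plan is to go through the minimality of the strong foliations, following the abstract. I would prove a stronger dichotomy. There is an open and dense $\cO\subset \PH^1_{c=1}(M)$ such that every $f\in\cO$ satisfies one of the following: either $f$ has a proper quasi-attractor, hence a trapping region $U$ with $f(\overline U)\subset U$; or both strong foliations $\cW^s$ and $\cW^u$ are minimal for every $g$ in a neighborhood of $f$. The second alternative gives robust transitivity. Indeed, if $\cW^u$ is minimal, every nonempty open set $V$ has $f^n(V)$ containing an unstable disc for large $n$, since $V$ contains a small $cu$-disc and $\dim E^c=1$ lets us take a $u$-disc inside. Minimality of $\cW^u$ then means that unstable discs of a uniform size $R$ are $\eps$-dense. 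Hence $f^n(V)$ meets any open $W$, which gives transitivity, and the same holds for all nearby $g$ once minimality is robust.

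\textbf{Step 1 (generic part).} Take $f$ in the residual set $\cG$ of the Generic dichotomy, and assume $f$ has no proper quasi-attractor. Then $M$ is the unique quasi-attractor, and similarly the unique quasi-repeller. Consider a minimal $\cW^u$-saturated compact set $\Lambda$; such sets exist by Zorn's lemma. Using $\dim E^c=1$, I would analyse the center behaviour of $\Lambda$. If $\Lambda\ne M$, either $\Lambda$ contains a hyperbolic periodic point of stable index $\dim E^s$, whose strong unstable manifold is then dense in $\Lambda$; or the center direction is neutral or expanding on $\Lambda$, which generically produces a proper attracting region through a center-expansion argument. The aim is to show that for generic $f$ without trapping regions, some periodic point $p$ of index $\dim E^s$ has $\cW^{uu}(p)$ dense in $M$. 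Symmetrically, some periodic $q$ of index $\dim E^s+1$ has $\cW^{ss}(q)$ dense. These are what I will call \emph{blenders-like robust density} certificates.

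\textbf{Step 2 (openness).} Next I would turn the density of $\cW^{uu}(\mathcal O(p))$ into a robust statement. By the connecting lemma, and by producing a $cu$-blender associated with $p$ and $q$ (they have indices differing by one, and heterodimensional cycles are created generically with $c=1$), every $\cW^u$-leaf of every nearby $g$ meets the stable set of the blender. Hence every strong unstable leaf accumulates on $\overline{\cW^{uu}(p_g)}$. The remaining task is to show that $\overline{\cW^{uu}(p_g)}=M$ robustly. This follows if $\cW^{uu}(p)$ is $\eps$-dense with a uniform length bound, and the $\cW^{ss}$-leaves of $q$ are $\eps$-dense in the same way. Discs of $\cW^s$ then cross $cu$-blender regions robustly, and the blender maps them into the closure of $\cW^{uu}$. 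The result is robust minimality of both foliations on an open set $\cO_1$ whose closure contains the generic $f$ without trapping regions. Together with the open set $\cO_2$ of diffeomorphisms with trapping regions, $\cO=\cO_1\cup\cO_2$ is open and dense, because $\cG\subset\overline{\cO_1\cup\cO_2}$.

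\textbf{Main obstacle.} The hard part is Step 1: excluding proper minimal $\cW^u$-saturated sets $\Lambda\ne M$ without a trapping region. The danger is the case where $\Lambda$ has center Lyapunov exponents that are all nonnegative but not uniformly so. My first attempt would be a dichotomy on invariant measures supported on $\Lambda$. If some measure has a negative center exponent, Pesin theory and Mañé's ergodic closing lemma give generically a periodic point of index $\dim E^s$ inside $\Lambda$, and its $\cW^{uu}$ is dense in $\Lambda$. If all exponents are nonnegative, $\Lambda$ is robustly center-expanding in a weak sense. I would then build a neighborhood $U$ of $\Lambda$ foliated by center-unstable plaques which is mapped into itself, giving a trapping region and a contradiction. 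Controlling this second case uniformly, namely turning the absence of negative exponents into an actual attracting neighborhood, is where I expect the real difficulty to lie.
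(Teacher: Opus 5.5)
Your outline has the right global shape: analyse the minimal $\cW^u$-saturated sets of a generic $f$, then upgrade to robust minimality of the strong foliations. But the step you single out as the main obstacle is the core of the theorem, and the mechanism you propose for it cannot work.

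\textbf{The generic step.} Expansion along $E^c$ pushes points near $\Lambda$ away from it. So a neighborhood built from center-unstable plaques is enlarged by $f$ in the center direction, not mapped into itself. Center exponents alone cannot settle the question either. For instance, $A\times\mathrm{Id}_{S^1}$ on $\TT^2\times S^1$ is chain-transitive, hence has no trapping region, yet the tori $\TT^2\times\{t\}$ are proper minimal unstable laminations with zero center exponent.

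What the paper uses to exclude this is non-joint integrability, which your plan never invokes. The attraction then comes from $\Lambda$ containing a disc tangent to $E^c\oplus E^u$, not from any center contraction. Concretely, for generic $f$:
\begin{enumerate}
\item The $f$-minimal unstable laminations are strongly non-jointly integrable \cite{CPoS} (Theorem~\ref{T.RobustSH}).
\item If $\Lambda$ contains a periodic point, or is not transversally Cantor (Proposition~\ref{prop-lamtransvcantor}), it is a quasi-attractor by Proposition~\ref{prop-QAgeneric}.
\item Otherwise $\Lambda$ is transverse (Theorem~\ref{t.transverse-periodic}). A section $K\subset\Lambda$ meeting each unstable leaf at most once (Proposition~\ref{p.section}) is hyperbolic by Theorem~\ref{teo-hyp}, with expanding center because $\Lambda$ has no periodic point. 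This gives the SH property (Theorem~\ref{T.SHorQA}).
\item The result of \cite{ACW} (Corollary~\ref{cor-ACW}) then puts a $cu$-disc inside $\Lambda$. Its stable leaves fill an open set attracted to $\Lambda$, so Proposition~\ref{prop-QAgeneric} again makes $\Lambda$ a quasi-attractor.
\end{enumerate}
Hence, with no proper quasi-attractor, every such $\Lambda$ equals $M$.

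Your index bookkeeping is also reversed. A measure with negative center exponent gives periodic points of stable index $\dim E^s+1$ in $\Lambda$. What such a point buys you is not density of its strong unstable manifold but, via Proposition~\ref{prop-QAgeneric}, that $\Lambda$ is a quasi-attractor.

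\textbf{The robustness step.} Step 2 has a second gap. Uniform $\eps$-density of the strong unstable manifold $\cW^u(p)$ persists under perturbation, but it does not give density of $\cW^u(p_g)$. The sentence about $\cW^s$-discs crossing blender regions is not an argument. Moreover, arguments of the type in \cite{BDU} take robust transitivity as an input and yield only one minimal foliation.

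The paper instead upgrades minimality through the SH property. When $\cW^s$ and $\cW^u$ are both minimal and both satisfy SH, the criterion of \cite{PS-SH} (Proposition~\ref{prop-robustminimal}) gives robust minimality of both. Theorem~\ref{T.SHorQA}, applied to $f$ and $f^{-1}$, provides SH for both foliations as soon as $f$ is not Anosov. When $f$ is Anosov, SH fails for one of the two foliations, and robust minimality of both is not obtained (see the remark after Theorem C). So your intermediate dichotomy should read ``transitive Anosov, or both foliations robustly minimal''; the Anosov case is then settled by structural stability.
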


The perturbation techniques we use to prove Theorem A all work in the conservative setting. We denote by $\PH^1_{c=1,\mathrm{vol}}(M)$ the subset of volume-preserving diffeomorphisms $f$ in $\PH^1_{c=1}(M)$. Among $\PH^1_{c=1,\mathrm{vol}}(M)$ those which are \emph{stably ergodic}\footnote{A volume-preserving diffeomorphism is \emph{stably ergodic} if any volume-preserving $C^2$-diffeomorphism in a $C^{1}$-neighborhood is ergodic with respect to the volume.} are dense \cite{BMVW}. However, this does not provide information about perturbations \emph{outside} the volume-preserving setting. To this end, we obtain the following result.

\begin{teoB}
There exists an open and dense subset $\cO_{\mathrm{vol}}\subset \PH^1_{c=1,\mathrm{vol}}(M)$ such that every $f \in \cO_{\mathrm{vol}}$ is robustly transitive in $\PH^1_{c=1}(M)$ (i.e. remains transitive under $C^1$-perturbations which may not preserve the volume). 
\end{teoB}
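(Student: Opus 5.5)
The plan is to derive Theorem B from the open and dense dichotomy that is the core of Theorem A, as announced in the abstract. Specifically, I would establish a stronger, more structural version: there is an open and dense set $\cO'\subset\PH^1_{c=1}(M)$ such that every $f\in\cO'$ either has a proper quasi-attractor (hence a trapping region), or has both strong foliations $\cW^s$ and $\cW^u$ minimal. I would also show that the perturbations used to reach $\cO'$ can be carried out inside the volume-preserving class, so that $\cO'\cap\PH^1_{c=1,\mathrm{vol}}(M)$ is open and dense in $\PH^1_{c=1,\mathrm{vol}}(M)$. Granting this, I take $\cO_{\mathrm{vol}}:=\cO'\cap\PH^1_{c=1,\mathrm{vol}}(M)$.

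The first step excludes the trapping-region alternative for conservative maps. If $f$ preserves volume and $U\neq M$ is open with $f(\overline U)\subset U$, then $\mathrm{vol}(U)=\mathrm{vol}(f(U))\le\mathrm{vol}(f(\overline U))$, while $U\setminus f(\overline U)$ is a non-empty open set. This forces $\mathrm{vol}(f(U))<\mathrm{vol}(U)$, a contradiction. So every $f\in\cO_{\mathrm{vol}}$ has both strong foliations minimal.

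The second step upgrades this to robust transitivity in the full space $\PH^1_{c=1}(M)$. Here I would use the robustness mechanism behind $\cO'$. The minimality of $\cW^u$ should be produced by a robust criterion, for instance the existence of a hyperbolic periodic point $p$ of stable index $\dim E^s+1$ (or a blender) whose stable manifold meets every unstable leaf, together with the symmetric property for $\cW^s$. Such criteria persist under arbitrary $C^1$-perturbations, conservative or not. So for $g$ near $f$ both strong foliations of $g$ remain minimal. Minimality of $\cW^u$ and $\cW^s$ then gives transitivity of $g$ by the standard argument: take open sets $V,W$. Every unstable leaf is dense, so some unstable disc in $V$ is iterated forward, and by minimality the iterates $g^n(V)$ eventually contain unstable discs that cross $W$. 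More carefully, I would use the robust hyperbolic point $p$: $W^u(p)$ is dense and meets $W^s$-leaves, and $W^s(p)$ is dense too. The $\lambda$-lemma then gives $g^n(V)\cap W\ne\emptyset$.

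The main obstacle is the structural dichotomy itself, namely that generically, when there is no proper quasi-attractor, both strong foliations are minimal in a robust way, with perturbations that preserve volume. The difficulty is the analysis of minimal $\cW^u$-saturated sets. For these I would try to show that a $C^1$-open and dense condition, coming from accessibility-type arguments and from the one-dimensionality of the center, forces each such set to contain a periodic point with a robust blender-like intersection property. Transferring the perturbation lemmas (Franks-type, connecting) to the volume-preserving category should be standard but must be checked.
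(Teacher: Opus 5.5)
\textbf{There is a genuine gap: the proposal defers all the substance to an unproven dichotomy, and the mechanism you sketch for it does not give robustness.}

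Your two outer steps are fine. A volume-preserving map has no trapping region. Minimality of $\cW^u_g$ alone already makes $g$ topologically mixing (Corollary~\ref{c.minimaltransitive}), so the $\lambda$-lemma detour is not needed. The problem is the ``structural dichotomy'' you grant, which is essentially Theorem~C.

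\emph{Your robustness criterion is insufficient.} Take your criterion: a periodic point $p$ of stable index $\dim E^s+1$ whose stable manifold meets every unstable leaf. Even in a uniform (hence robust) form, applying it to the leaves through $g^{-n}(x)$ only shows that every $\cW^u_g$-leaf accumulates on the orbit of $p_g$. So the minimal unstable laminations of $g$ are among the sets $\overline{\cW^u_g(g^i(p_g))}$. Nothing forces these sets to be all of $M$. For a dissipative $g$ such a closure could a priori be a proper attractor, and excluding that is precisely Theorem~B.

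\emph{The tools you invoke do not close this.} The blender arguments of \cite{BDU} that give robust minimality assume robust transitivity from the start. Brin's accessibility argument gives transitivity only under $\Omega(f)=M$, which a non-conservative perturbation need not satisfy.

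\emph{Finding periodic points first is not a reduction.} For generic conservative $f$, a proper invariant lamination contains no periodic point (remark after Proposition~\ref{prop-QAgeneric}). So forcing each minimal lamination to contain a periodic point is equivalent to the minimality you are trying to prove. Also, your dichotomy asserts both foliations minimal on an open set of Anosov maps, which the paper does not obtain; the paper handles that case by structural stability.

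\textbf{The missing ingredient is the SH property.} It is an open condition (Remark~\ref{r.SH-robust}) that forces center-unstable discs to grow, and it is what survives non-conservative perturbations. The paper's route is as follows.
\begin{enumerate}
\item For generic $f$, also among conservative maps, each $f$-minimal unstable lamination either is a hyperbolic attractor with contracting center (in the conservative case only if $f$ is Anosov) or has SH. This is Theorem~\ref{T.SHorQA}, proved via the section $K$ without unstable connections of Proposition~\ref{p.section} and the hyperbolicity of such sets (Theorem~\ref{teo-hyp}).
\item $\cW^u$ and $\cW^s$ are $\varepsilon$-transverse on an open dense set of conservative maps (Corollary~\ref{C.QAorTrans}, built on the strong non-joint integrability from \cite{CPoS}).
\item For $C^2$ conservative maps in the resulting open dense set, Theorem~\ref{teo-ACW} and Proposition~\ref{prop-njifinite} give minimality of $\cW^s$ and $\cW^u$.
\item Proposition~\ref{prop-robustminimal} (Pujals--Sambarino: minimality plus SH) then makes both foliations robustly minimal in all of $\PH^1_{c=1}(M)$, hence $f$ is robustly mixing.
\end{enumerate}
Taking $\cO_{\mathrm{vol}}$ to be a neighborhood of these $C^2$ maps gives an open dense set, and Anosov maps are covered by structural stability.
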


Theorem A of this paper was announced in \cite[\S 5]{CroPot} where a sketch of the proof was provided. We mention that the simplest context on which the results are new deals with partially hyperbolic diffeomorphisms with one dimensional center in the isotopy class of Anosov diffeomorphisms of $\TT^3$. Some specific partially hyperbolic examples in dimension 3 admit simpler proofs of analogous results (see e.g. \cite[\S 5.6]{BGHP}) but in general for 3-dimensional partially hyperbolic diffeomorphisms the results are new. See also \cite{JYang} for other results ensuring robust transitivity of volume-preserving partially hyperbolic diffeomorphisms. 

The proofs of the results are based on a careful study of $f$-invariant laminations saturated by strong unstable (or by strong stable) manifolds. It is a classical result (see \cite{HPS, CroPot}) that if $f$ is a partially hyperbolic diffeomorphism on $M$ then, there exist $f$-invariant foliations $\cW^s$ and $\cW^{u}$ tangent to $E^s$ and $E^u$ respectively. 

Since the results were announced, a new technique has been developed in \cite{ACW} that allows us to improve and simplify our results via the use of the SH property of \cite{PS-SH}. The use of that property already appeared in our initial approach by showing that under some assumptions, the foliations $\cW^s$ or $\cW^u$ are minimal (i.e. every leaf is dense in $M$). We refer the reader to \cite{CroPot} for an outline on the previous approach for which we are also filling the details here.  

We prove a more precise technical statement (Theorem \ref{T.SHorQA}) involving property SH that will be defined later. This allows to combine the results of \cite{CPoS} (that need to be strengthened, see Corollary \ref{C.QAorTrans}) and \cite{ACW} to obtain the following statement which is stronger than Theorems A and B.

\begin{teoC}
There is an open and dense subset $\cO\subset\PH^1_{c=1}(M)$ which decomposes as two disjoint open sets $\cO = \cO_m \cup \cO_a $ such that: 
\begin{itemize}
\item If $f\in\cO_m$ then either $f$ is a transitive Anosov or both $\cW^s$ and $\cW^u$ are robustly minimal.
In particular, $f$ is robustly topologically mixing\footnote{A diffeomorphism $f: M \to M$ is topologically mixing if for every pair of open sets $U,V$ there is $n_0>0$ so that $f^n(U)\cap V \neq \emptyset$ for every $n>n_0$.}. 
\item If $f \in \cO_a$ then the minimal $\cW^u$-saturated compact sets are proper and their number is finite.
Each of them either is contained in a connected component of some proper quasi-attractor. The same holds for minimal $\cW^s$-saturated compact sets and $f^{-1}$.
\end{itemize}
The set $\cO_{\mathrm{vol}}:=\cO_m\cap \PH^1_{c=1,\mathrm{vol}}(M)$ is open and dense in
$\PH^1_{c=1,\mathrm{vol}}(M)$.
\end{teoC}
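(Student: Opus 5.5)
The plan is to deduce Theorem C from three ingredients that the introduction announces: Theorem~\ref{T.SHorQA}, a strengthened version of \cite{CPoS} (Corollary~\ref{C.QAorTrans}), and the minimality criterion of \cite{ACW}. Theorem~\ref{T.SHorQA} should say that, for $f$ in an open and dense subset of $\PH^1_{c=1}(M)$, each minimal $\cW^u$-saturated compact set either has property SH of \cite{PS-SH} or lies inside a connected component of a proper quasi-attractor. The analogous statement should hold for $\cW^s$ and $f^{-1}$.

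\textbf{Step 1: robustness and finiteness.} I would fix the open dense set $\cO'$ on which Theorem~\ref{T.SHorQA} holds for both $f$ and $f^{-1}$. I would then show that minimal $\cW^u$-saturated compact sets vary semicontinuously and are finite in number. Finiteness comes from compactness and uniform size of unstable plaques: two distinct minimal sets are disjoint, and any minimal set must meet the finitely many neighbourhoods where the SH/quasi-attractor alternative is witnessed. The key input here is that SH is an open property, since it is witnessed by finitely many hyperbolic periodic points and uniform expansion along center curves. Likewise, lying in a trapping region is an open condition.

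\textbf{Step 2: the dichotomy.} I would define $\cO_a$ as the interior of the set of $f\in\cO'$ having a proper minimal $\cW^u$- or $\cW^s$-saturated set contained in a trapping region. I would define $\cO_m$ as the interior of the set where every minimal $\cW^u$-saturated set and every minimal $\cW^s$-saturated set has SH.

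For $f\in\cO_m$, I would invoke \cite{ACW}: if every minimal $\cW^u$-saturated set has SH (and likewise for $\cW^s$), then, after a further open dense restriction that excludes the Anosov case, $\cW^u$ and $\cW^s$ are minimal. Otherwise $f$ is a transitive Anosov diffeomorphism. Minimality of both strong foliations is robust by the openness from Step 1. Robust topological mixing then follows by a standard argument. Given open sets $U,V$, take a disc $D\subset U$ inside an unstable leaf. Its iterates $f^n(D)$ contain unstable discs of arbitrarily large size, and these become $\varepsilon$-dense by uniform minimality. Hence $f^n(U)\cap V\ne\emptyset$ for all large $n$.

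For $f\in\cO_a$, the claims on $\cW^u$- and $\cW^s$-minimal sets come directly from Theorem~\ref{T.SHorQA} together with Step 1. The sets $\cO_a$ and $\cO_m$ are disjoint, because a proper quasi-attractor contradicts minimality of $\cW^u$: a leaf through the attractor stays in the trapping region.

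The main obstacle is density of $\cO_m\cup\cO_a$, that is, excluding mixed situations. For example, $\cW^u$ could have all minimal sets with SH while some $\cW^s$-minimal set is only contained in a repeller, or SH could hold non-robustly. I would first try the following. If some $\cW^s$-minimal set lies in a proper quasi-repeller, then $\cW^s$ is not minimal. By the \cite{ACW} argument applied to $f^{-1}$ (Corollary~\ref{C.QAorTrans}), $f$ cannot be transitive, so generically a proper quasi-attractor exists. Its $\cW^u$-saturated minimal subset cannot have SH, since SH forces unstable leaves to escape every trapping region. This should place $f$ in $\cO_a$.

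\textbf{Step 3: the volume-preserving case.} Here no proper trapping region exists, since $f(\overline U)\subset U$ contradicts preservation of volume. So $\cO_a\cap\PH^1_{c=1,\mathrm{vol}}=\emptyset$. It remains to check that the perturbations used to produce $\cO'$ and density can be made conservative. By the introduction's remark, the techniques all work in the conservative setting. This yields that $\cO_m\cap\PH^1_{c=1,\mathrm{vol}}(M)$ is open and dense in $\PH^1_{c=1,\mathrm{vol}}(M)$.
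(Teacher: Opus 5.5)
There is a genuine gap, and it sits at what you call the main obstacle.

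\textbf{The false dichotomy.} Theorem~\ref{T.SHorQA} is a residual statement. Its alternative is ``uniformly hyperbolic attractor with contracting center (possibly $M$)'' versus SH. It is not ``SH'' versus ``inside a proper quasi-attractor''. Your claim that SH forces unstable leaves to escape every trapping region is false. SH only asks that each large unstable disc of $\Lambda$ contain points with expanded center. The proof of Theorem~\ref{T.SHorQA} establishes SH precisely for quasi-attractors: hyperbolic ones with expanding center, and non-hyperbolic ones via Corollary~\ref{c.palis}. These may well be proper.

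\textbf{The missing idea.} The paper uses SH in the opposite direction. For generic $f$, an $f$-minimal unstable lamination with SH is transverse by Corollary~\ref{C.QAorTrans}. Applying \cite{ACW} through $C^2$ approximations (Corollary~\ref{cor-ACW}) shows it contains a disc tangent to $E^c\oplus E^u$. It therefore attracts an open set, and Proposition~\ref{prop-QAgeneric} makes it a quasi-attractor. Thus generically \emph{every} $f$-minimal unstable lamination is a quasi-attractor.

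The dichotomy is then organized by chain-transitivity. This is symmetric in $f$ and $f^{-1}$, which disposes of your ``mixed situations''.
\begin{itemize}
\item If $f$ is chain-transitive, $M$ is the only quasi-attractor, so $\cW^u$ is $f$-minimal. It is then a finite union of minimal laminations, hence minimal since $M$ is connected. The same holds for $\cW^s$. Robust minimality follows from Proposition~\ref{prop-robustminimal}, which needs minimality \emph{and} SH for both foliations, not just openness of SH.
\item If $f$ is not chain-transitive, every minimal $\cW^u$-lamination is a component of a proper quasi-attractor, and symmetrically for $\cW^s$ and $f^{-1}$. One must still show this persists for all nearby $g$. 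The paper uses the semicontinuity of Theorem~\ref{T.RobustSH} and a case analysis: hyperbolic with contracting center, hyperbolic with expanding center, and non-hyperbolic with a periodic point of stable dimension $d_s+1$.
\end{itemize}

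\textbf{Your definitions of $\cO_a$ and $\cO_m$.} Your $\cO_a$ is defined by the existence of \emph{one} minimal set in a trapping region, so it gives no control on the other minimal sets. Your $\cO_m$ excludes transitive Anosov diffeomorphisms, for which one of the two foliations fails SH. These form an open set that your $\cO_m\cup\cO_a$ misses, so density fails as set up.

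\textbf{Other steps that fail as stated.}
\begin{itemize}
\item Finiteness in Step~1 cannot follow from compactness and uniform plaque size. For $\alpha=0$ in the example of \S\ref{s.unstablelam} there is a continuum of minimal unstable laminations. Finiteness is the perturbative result of \cite{CPoS} (Theorem~\ref{T.RobustSH}).
\item Your use of \cite{ACW} drops its hypotheses ($\eps$-transversality and $C^2$ regularity). Its conclusion is only a $cu$-disc in $\Lambda$; getting $\Lambda=M$ needs transitivity or the quasi-attractor argument above.
\item In the conservative case, the absence of trapping regions together with density of $\cO$ in $\PH^1_{c=1}(M)$ says nothing about density of $\cO_m\cap\PH^1_{c=1,\mathrm{vol}}(M)$. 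Volume-preserving maps form a nowhere dense set, so a direct argument is needed:
\begin{enumerate}
\item Conservative versions of Corollary~\ref{C.QAorTrans} and Theorem~\ref{T.SHorQA} give an open dense set where both foliations are $\eps$-transverse and either $f$ is Anosov or both have SH.
\item Theorem~\ref{teo-ACW} and Proposition~\ref{prop-njifinite} then give minimality of both foliations for $C^2$ elements.
\item Proposition~\ref{prop-robustminimal} makes this robust under arbitrary, possibly non-conservative, perturbations.
\end{enumerate}
\end{itemize}
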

\begin{obs} The proof (\S\ref{s.proofsAB}) gives additional properties:
\smallskip

\noindent
(a) Each minimal $\cW^u$-saturated compact set $\Lambda$ has some period $\tau\geq 1$. It satisfies both an $\varepsilon$-transversality property (Definition~\ref{def.transverse}) and the property SH (for $f^\tau$) unless it is the connected component of a uniformly hyperbolic attractor with contracting center bundle.
\smallskip

\noindent
(b) When $f\in \cO_m$ is Anosov, the minimal $\cW^u$-saturated and $\cW^s$ compact sets are unique. One of the foliations is minimal, but (using the $\varepsilon$-transversality) \cite{ACW} implies that if $f$ is also $C^2$ (or belongs to a dense G$_\delta$ subset of $\cO_m$ or $\cO_{\mathrm{vol}}$) then both $\cW^s$ and $\cW^u$ are minimal.
\smallskip

\noindent
(c) When $f\in \cO_a$ each connected component $K$ of a quasi-attractor contains one minimal $\cW^u$-saturated compact set $\Lambda$, which is unique. If $f$ belongs to a dense G$_\delta$ subset of $\cO_a$, then $\Lambda=K$.\smallskip

\noindent
(d) Note that when $f\in \cO_a$, minimal $\cW^u$-saturated sets are disjoint from minimal $\cW^s$-saturated sets.
\end{obs}

Theorem C improves \cite{BDU} (see also \cite{HHU} for higher dimensions) which shows that among partially hyperbolic robustly transitive diffeomorphisms with one dimensional center, the set of those for which one of the two strong foliations $\cW^s$ or $\cW^u$ is minimal is open and dense. 
Here, the improvement is that we do not assume a priori that the diffeomorphisms are robustly transitive (just that it is not approximated by a diffeomorphism with a proper trapping region) and we obtain robust minimality of both foliations simultaneously. It should be pointed out that neither \cite{BDU} nor \cite{HHU} are used in our proof.

As mentioned before, our results were known in some specific settings, but for instance, they were unknown in all generality for derived from Anosov diffeomorphisms (see \cite{HUY} for examples of derived from Anosov for which both strong foliations are minimal). 
In the setting of derived from Anosov diffeomorphisms of $\TT^3$, \cite{CPo} proves that
the dynamics is chain-transitive, so:

\begin{corD}
There is an open and dense subset $\cO_{DA}$ of the space of $C^1$ partially hyperbolic diffeomorphisms of $\TT^3$ isotopic to a linear Anosov automorphism such that for every $f \in \cO_{DA}$ both foliations $\cW^s$, $\cW^u$ are minimal. 
\end{corD}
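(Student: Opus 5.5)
Corollary D follows from Theorem C once we show that, for derived from Anosov maps of $\TT^3$, the alternative $\cO_a$ never occurs near the relevant set. Concretely, I would take $\cO_{DA}:=\cO_m\cap \mathcal{DA}$, where $\mathcal{DA}$ denotes the $C^1$-open set of partially hyperbolic diffeomorphisms of $\TT^3$ isotopic to a linear Anosov automorphism. Since $\dim \TT^3=3$, the center is automatically one-dimensional, so $\mathcal{DA}$ is an open subset of $\PH^1_{c=1}(\TT^3)$. It is open because partial hyperbolicity is open and isotopy classes are open in $\mathrm{Diff}^1$. Openness of $\cO_{DA}$ is then immediate, and the work lies in density.

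For density, the main input is the result of \cite{CPo}: every $f\in\mathcal{DA}$ is chain-transitive. Chain-transitivity on the whole manifold rules out proper trapping regions. Indeed, if $f(\overline U)\subset U$ with $\emptyset\neq U\neq M$, then no $\eps$-pseudo-orbit with $\eps$ small can go from a point of $U$ to a point of $M\setminus \overline{f(U)}$. Hence no $f\in\mathcal{DA}$ has a proper quasi-attractor, since such a quasi-attractor would be contained in a trapping region by the discussion in the introduction.

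Now take $f\in\mathcal{DA}$. By density of $\cO$ in Theorem C, pick $g\in \cO\cap\mathcal{DA}$ arbitrarily close to $f$. If $g\in\cO_a$, then minimal $\cW^u$-saturated compact sets are proper and each lies in a connected component of a proper quasi-attractor. Such sets always exist by Zorn's lemma, since $M$ is $\cW^u$-saturated and compact. So $g$ would have a proper quasi-attractor, contradicting the previous step. Hence $g\in\cO_m$, and $\cO_{DA}$ is dense in $\mathcal{DA}$.

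Finally, for $g\in\cO_{DA}$, Theorem C gives that either both $\cW^s,\cW^u$ are (robustly) minimal, or $g$ is a transitive Anosov diffeomorphism. The second case is the main obstacle, since Theorem C only guarantees minimality of one foliation there (Remark (b)). To handle it, I would use the classical fact for Anosov diffeomorphisms of $\TT^3$ with the splitting $E^s\oplus(E^c\oplus E^u)$ or $(E^s\oplus E^c)\oplus E^u$. Such a $g$ is conjugate to a linear hyperbolic automorphism, whose strong foliations are linear irrational foliations and hence minimal. The strong foliations of $g$ should then be minimal as well, via the argument of \cite{BDU}: the one-dimensional strong foliation of a codimension-one Anosov on $\TT^3$ is minimal. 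Failing that, I would shrink $\cO_{DA}$ to the open and dense subset given by the $\eps$-transversality combined with \cite{ACW}, as in Remark (b), if that subset can be taken open.
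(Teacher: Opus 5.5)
Your first three paragraphs are exactly the paper's deduction. Corollary D is obtained from Theorem C and the chain-transitivity proved in \cite{CPo}, which excludes proper quasi-attractors and hence gives $\cO\cap\mathcal{DA}\subset\cO_m$. Your checks of openness, density and the exclusion of $\cO_a$ are correct.

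The gap is in your last paragraph, when $g\in\cO_{DA}$ is a transitive Anosov diffeomorphism, and neither proposed fix closes it. Assume $E^c$ is uniformly expanded; the other case is symmetric.
\begin{itemize}
\item The conjugacy $h$ with the linear automorphism maps the full one-dimensional stable foliation to the linear one, so $\cW^s$ is minimal. But that is the foliation already known to be robustly minimal (Remark~\ref{r.uniqueness-minimal}).
\item The strong unstable foliation $\cW^u$ is a subfoliation of the two-dimensional unstable foliation, defined by expansion rates rather than topologically. Nothing forces $h$ to send it to the linear strong unstable foliation, so minimality does not transfer from the linear model.
\item Likewise \cite{BDU} only yields robust minimality of \emph{one} of the two strong foliations.
\end{itemize}

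Your fallback does not supply an open set. By Remark (b) after Theorem C, the route through $\varepsilon$-transversality and \cite{ACW} (Theorem~\ref{teo-ACW}) gives minimality of $\cW^u$ only for $C^2$ diffeomorphisms or on a dense $G_\delta$ subset. Nothing in the paper makes this an open condition:
\begin{itemize}
\item Proposition~\ref{prop-robustminimal} requires the SH property for both foliations. SH for $\cW^s$, that is for $f^{-1}$, fails when $E^c$ is uniformly expanded.
\item The passage from $C^2$ to $C^1$ in Corollary~\ref{cor-ACW} needs Hausdorff convergence of the approximating laminations, which is only available at continuity points.
\end{itemize}

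So what your argument actually proves is this: on an open dense subset of $\mathcal{DA}$, either both strong foliations are robustly minimal, or $f$ is a transitive Anosov diffeomorphism. In the Anosov case one strong foliation is robustly minimal, and the other contains a unique minimal sublamination, which equals $\TT^3$ when $f$ is $C^2$ or $C^1$-generic.

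The paper's one-line deduction from Theorem C does not treat this Anosov subcase beyond Remark (b) either. To get the statement as written on the (open, non-empty) set of Anosov maps, you would need an extra input giving $C^1$-robust minimality of the strong foliation inside the two-dimensional (un)stable bundle.
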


Related results to the existence and uniqueness of Gibbs u-states and physical measures in dimension 3, which use the recent results from \cite{Katz,ALOS,EPZ}, will be studied in \cite{ACEPWZ}. 

\subsection{Techniques and some questions} 

Many parts of the result are independent of $C^1$-perturbations, so it is natural to ask (or even conjecture?):

\begin{quest} 
Do the same results hold in the $C^r$-topology? 
\end{quest}

The strategy of the proof of our main results is to try to develop a careful study of minimal $\cW^u$-saturated sets. It follows from \cite{CPoS} that there are finitely many such sets and that the $\cW^u$-leaves cannot be jointly integrable with the strong stable foliation.
By discussing carefully the possible geometric configurations, we are able to show that if
$f$ is not Anosov and if the minimal $\cW^u$-lamination is not contained in a proper trapping region, then, it must have some expanding property along the center and satisfies the SH property. This also produces some transversality which, combined with the recent results of \cite{ACW}, allows to prove minimality of the strong unstable foliation. Both the perturbation results of \cite{CPoS} and the analysis of proper attracting regions rely on perturbation techniques that are known only in the $C^1$-topology. 

Let us discuss the technique to obtain the center expansion along minimal $\cW^u$-saturated sets. The key idea is to understand the behavior transverse to such laminations: after taking the quotient by the direction $E^u$, we are reduced to a setting analogous to the case of a diffeomorphism preserving a splitting $E^s\oplus E^c$ with $\dim(E^c)=1$.
The strategy is then to apply techniques from \cite{PS-IHP, CPS} and prove that either the center bundle is contracting and $\Lambda$ is a (quasi)-attractor, or the center bundle is expanding, which gives the SH property for the lamination. In this paper, we are able to construct invariant transverse sections to the lamination which allow to directly apply \cite{PS-IHP}.

\subsection*{Organization of the paper}
\S\ref{s.prelim} is devoted to preliminaries about generic dynamics. In \S\ref{s.surface} we state results about dimension reduction and we explain how to apply the results from \cite{PS-IHP} in our context to deduce hyperbolicity.
In \S\ref{s.NJI} we explore the notions of transversality and non-joint integrability; we also state one of the main generic results, Corollary \ref{C.QAorTrans}. It is based on a general result (Theorem \ref{t.transverse-periodic}) that establishes transversality or existence of a periodic orbit for a general minimal unstable lamination. The other main generic result, Theorem \ref{T.SHorQA} allows to obtain the SH property. It is stated in \S\ref{s-SH} where we also present the results from \cite{ACW}.
The main theorems are then proved in \S\ref{s.proofsAB}. Finally in \S\ref{s.transverse} and \S\ref{s.sectiondynamics} we prove the generic results, Corollary \ref{C.QAorTrans} and Theorem \ref{T.SHorQA} respectively.
\medskip 
\medskip

{\small {\em Acknowledgements:} This paper grew out of an attempt to complete some preliminary results obtained by the first author with F. Abdenur who later abandoned the project. It is a pleasure to acknowledge his input and discussions about this subject. }

\section{Preliminaries}\label{s.prelim}
In all this paper $f$ denotes a partially hyperbolic diffeomorphism with $\dim E^c =1$ of a closed manifold $M$. At some points, we will also  assume that $f$ is volume-preserving. Some general references can be \cite{BDV,Crov-habilitation,CroPot}, where proofs of classical properties may be found.

\subsection{Partially hyperbolic notations}\label{ss.localproductstructure}

For $f \in \PH^1_{c=1}(M)$ there are $f$-invariant foliations $\cW^s, \cW^u$ tangent respectively to $E^s$ and $E^u$. We will write $\cW^s_f, \cW^u_f$, $E^s_f,E^c_f,E^u_f$ when we want to mention the dependence with respect to the diffeomorphism. We denote by $\cW^\bullet(x)$ the leaf of $\cW^\bullet$ ($\bullet=s,u$) through the point $x$. We can endow each leaf of $\cW^\bullet$ with the Riemannian metric  induced by its immersion and we denote by $\cW^\bullet_L(x)$ the set of points in $\cW^\bullet(x)$ which can be joined to $x$ by an arc of length smaller than $L$ inside $\cW^\bullet(x)$. Let $d_\bullet = \mathrm{dim}(E^\bullet)$. 

A center stable (resp. center unstable) disk is a disk of dimension $d_{cs}=d_s +1$ (resp. $d_{cu}=d_u+1$) tangent everywhere to a cone-field arbitrarily close to $E^s \oplus E^c$ (resp. $E^c \oplus E^u$).
Such disks with small diameter can be constructed using the exponential map. There are uniform distances $\delta_0,r_0>0$ such that if points $x,y$ are at distance less than $\delta_0$ then $\cW^s_{r_0}(x)$ (resp. $\cW^u_{r_0}(x)$) intersects at a unique point the center unstable (resp. center stable) disks centered at $y$ of size $r_0$. This property is sometimes called a \emph{local product structure}. We will fix such scales and denote $\cW^{\bullet}_{loc}(x):=\cW^{\bullet}_{r_0}(x)$. 

In general, for a partially hyperbolic diffeomorphism $f: M \to M$ we can (and will) assume that $M$ is endowed with an \emph{adapted metric} implying that there is $\lambda>1$ such that for every $v \in E^u$ we have $\|Df v \| \geq \lambda \|v\|$.
One can also assume that the metric is adapted with respect to the other invariant bundles (see \cite{CroPot}) but we will not use this. 

\subsection{Unstable laminations}\label{s.unstablelam}

An \emph{unstable lamination} $\Lambda$ is a compact non-empty set which is saturated by leaves of the foliation $\cW^u$. A \emph{minimal unstable lamination} $\Lambda$  is an unstable lamination which does not contain a proper unstable lamination.

\begin{defi}
An unstable lamination $\Lambda$ is called \emph{$f$-minimal} if it is $f$-invariant and if
$f$-invariant unstable laminations $\Lambda' \subset \Lambda$ coincide with $\Lambda$. 
\end{defi} 

Every unstable lamination contains at least one minimal unstable lamination and every $f$-invariant unstable lamination contains an $f$-minimal one. Note that being an $f$-invariant minimal unstable lamination is stronger than being an $f$-minimal unstable lamination. 

\begin{ej} 
Consider the diffeomorphism $f: \TT^2 \times S^1 \to \TT^2 \times S^1$ given by $f(x,t) = (g (x), R_\alpha(x))$ where $g: \TT^2 \to \TT^2$ is an Anosov diffeomorphism and $R_\alpha$ denotes the rotation of $S^1$ of angle $\alpha \in \RR$. If $\alpha \in \RR \setminus \QQ$ then $\TT^2 \times S^1$ is an $f$-minimal unstable lamination which is not a minimal unstable lamination. If $\alpha=0$ then the minimal unstable laminations coincide with the $f$-minimal ones and are of the form $\TT^2 \times \{t\}$. 
\end{ej}

\subsection{Minimality and recurrence} 
We will use basic properties about these laminations. The following properties are simple exercises and left to the reader (see \cite{CroPot}). 
We say that a set $\Gamma \subset M$ is $\eps$-dense on $\Lambda \subset M$ if it intersects
each ball $B_\eps(x)$ with $x \in \Lambda$. (We do not require $\Gamma\subset \Lambda$.)

\begin{lema}\label{p.fminimalit}
For every $f$-minimal unstable lamination $\Lambda$ and every $\eps,r>0$ there exists $n>0$ such that for every $d_u$-dimensional disk $D\subset \Lambda$ with internal radius larger than $r$ inside
a leaf of $\cW^u$, the union $D \cup f(D) \cup \ldots f^{n}(D)$ is $\eps$-dense on $\Lambda$. 
\end{lema}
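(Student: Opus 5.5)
Fix $\eps,r>0$. I argue by contradiction combined with compactness. Suppose no $n$ works. Then for each $n$ there is a $d_u$-disk $D_n\subset\Lambda$ of internal radius $>r$ in a $\cW^u$-leaf, and a point $x_n\in\Lambda$, such that $B_\eps(x_n)$ misses $D_n\cup f(D_n)\cup\dots\cup f^n(D_n)$. Shrinking, I may take $D_n=\cW^u_r(y_n)$ with $y_n\in\Lambda$. By compactness of $\Lambda$, pass to a subsequence with $y_n\to y\in\Lambda$ and $x_n\to x\in\Lambda$. Then for $n$ large, $B_{\eps/2}(x)$ misses $f^k(\cW^u_r(y_n))$ for all $0\le k\le n$.

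Next I pass to the limit disk. Unstable plaques $\cW^u_r(\cdot)$ vary continuously with the base point. Hence for any fixed $k$, $f^k(\cW^u_r(y_n))\to f^k(\cW^u_r(y))$ in the Hausdorff topology. Therefore no point of $f^k(\cW^u_{r/2}(y))$ lies in $B_{\eps/4}(x)$, for every $k\ge 0$. Consequently the set
\[
\Gamma=\overline{\bigcup_{k\ge0} f^k(\cW^u_{r/2}(y))}
\]
is a compact subset of $\Lambda$ missing $B_{\eps/4}(x)$, and in particular $\Gamma\neq\Lambda$.

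It remains to show that $\Gamma$, or a set derived from it, is an $f$-invariant unstable lamination. Because $f$ uniformly expands $E^u$ (adapted metric, factor $\lambda>1$), $f^k(\cW^u_{r/2}(y))$ contains $\cW^u_{\lambda^k r/2}(f^k y)$. So the $\omega$-limit type set
\[
\Gamma'=\bigcap_{N}\overline{\bigcup_{k\ge N} f^k(\cW^u_{r/2}(y))}\subset\Gamma
\]
is nonempty, compact, and satisfies $f(\Gamma')=\Gamma'$. It is also $\cW^u$-saturated: any point $z\in\Gamma'$ is a limit of points $z_j$ in disks $f^{k_j}(\cW^u_{r/2}(y))$ with $k_j\to\infty$. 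These points lie in plaques of radius growing like $\lambda^{k_j}$, although $z_j$ need not be centered. The fix is that the disks $f^{k}(\cW^u_{r/2}(y))$ contain a uniform-radius plaque around every point at distance $\geq R$ from their boundary, and the points within distance $R$ of the boundary can be handled by replacing $\cW^u_{r/2}(y)$ with $\cW^u_{r/4}(y)$ and using expansion. Then continuity of plaques gives $\cW^u_R(z)\subset\Gamma'$ for every $R$. Thus $\Gamma'$ is an $f$-invariant unstable lamination strictly contained in $\Lambda$, which contradicts $f$-minimality.

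I expect this saturation check to be the only delicate step. Everything else is compactness and continuity of the local unstable plaques.
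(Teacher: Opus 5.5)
The paper leaves this lemma as an exercise (pointing to \cite{CroPot}), so there is no proof to compare with. Your contradiction-plus-compactness scheme is the natural one, and everything up to the construction of $\Gamma'$ is correct. The gap is in the saturation step. You claim that $\Gamma'$ itself, the $\omega$-limit of the disc $\cW^u_{r/2}(y)$, is $\cW^u$-saturated, but your sketch does not establish this.

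A point $z\in\Gamma'$ may only be accumulated by points $z_j=f^{k_j}(p_j)$ whose preimages $p_j\in\cW^u_{r/2}(y)$ approach the boundary of the disc fast enough that $z_j$ stays at bounded leaf-distance from $f^{k_j}(\partial\cW^u_{r/2}(y))$. Near such $z_j$ the discs $f^{k_j}(\cW^u_{r/2}(y))$ contain only a one-sided neighbourhood; for $d_u=1$, only a half-leaf. Nothing then forces $\cW^u_R(z)\subset\Gamma'$. Passing to radius $r/4$ does not rescue these points, since they need not belong to the $\omega$-limit of the smaller disc.

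Write $\Gamma'_s:=\bigcap_N\overline{\bigcup_{k\ge N}f^k(\cW^u_s(y))}$. What your expansion estimate actually proves is the inclusion
\[
\cW^u(\Gamma'_{r/4})\subset\Gamma'_{r/2}.
\]
Indeed, if $z=\lim f^{k_j}(p_j)$ with $p_j\in\cW^u_{r/4}(y)$, then $\cW^u_{\lambda^{k_j}r/4}(f^{k_j}p_j)\subset f^{k_j}(\cW^u_{r/2}(y))$. Continuity of plaques then gives $\cW^u_R(z)\subset\Gamma'_{r/2}$ for every $R$. This does not say that $\Gamma'_{r/4}$ or $\Gamma'_{r/2}$ is saturated.

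The repair is immediate: take
\[
\Gamma'':=\overline{\cW^u(\Gamma'_{r/4})},
\]
the closure of the union of the full leaves through $\Gamma'_{r/4}$. It has all the properties you need:
\begin{itemize}
\item It is non-empty and compact.
\item It is $f$-invariant, because $\Gamma'_{r/4}$ is and $f$ maps $\cW^u$-leaves to $\cW^u$-leaves.
\item It is $\cW^u$-saturated, because the closure of a saturated set is saturated by continuity of the foliation.
\item By the inclusion above, $\Gamma''\subset\Gamma'_{r/2}\subset\Gamma\subset\Lambda\setminus B_{\eps/4}(x)$.
\end{itemize}
Hence $\Gamma''$ is a proper $f$-invariant unstable sublamination of $\Lambda$, contradicting $f$-minimality. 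With this modification your proof is complete.
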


\begin{cor}\label{c.minimaltransitive}
If $\Lambda$ is an $f$-minimal unstable lamination then $f|_\Lambda$ is transitive. If $\Lambda$ is a minimal unstable lamination, then $f|_\Lambda$ is topologically mixing.  
\end{cor}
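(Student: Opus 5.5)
We use Lemma~\ref{p.fminimalit}, which says that iterates of large unstable disks in $\Lambda$ become $\eps$-dense, together with the uniform expansion of $E^u$ for the adapted metric.

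\emph{Transitivity of $f|_\Lambda$ for an $f$-minimal $\Lambda$.} Let $U,V$ be non-empty relatively open subsets of $\Lambda$, and pick $x\in U$. Since $\Lambda$ is $\cW^u$-saturated, $\Lambda\cap U$ contains a small unstable disk $D_0\subset\cW^u(x)\cap\Lambda$ of some radius $\rho>0$. Choose $\eps>0$ and $y\in V$ with $B_{\eps}(y)\cap\Lambda\subset V$. Apply Lemma~\ref{p.fminimalit} with this $\eps$ and $r=1$, which gives an integer $n$.

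For the adapted metric, $f^k(D_0)$ contains a disk of internal radius at least $\lambda^k\rho$. Fix $k$ with $\lambda^k\rho>1$, and let $D\subset f^k(D_0)$ be a disk of radius $>1$. It is contained in $\Lambda$ because $\Lambda$ is $f$-invariant. Then $D\cup\dots\cup f^n(D)$ meets $B_\eps(y)$. Since every iterate lies in $\Lambda$, it meets $V$. Hence $f^{k+j}(U)\cap V\neq\emptyset$ for some $j\le n$, which gives topological transitivity. As $\Lambda$ is compact metric without isolated points (it contains disks) or finite, transitivity yields a dense orbit by Baire.

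\emph{Mixing for a minimal unstable lamination $\Lambda$.} Here $\Lambda$ need not be $f$-invariant a priori, so this is the main point to settle first. Since $f(\Lambda)$ is again a minimal unstable lamination, either it equals $\Lambda$ or the two are disjoint. The statement implicitly concerns $f|_\Lambda$, so we assume $f(\Lambda)=\Lambda$.

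Then $\Lambda$ is also $f$-minimal: any invariant sub-lamination is a sub-lamination, hence all of $\Lambda$. The key gain is that the lemma can be upgraded to a single iterate. Every leaf of $\cW^u$ in $\Lambda$ is dense in $\Lambda$, since its closure is a sub-lamination. By compactness there is $R$ such that every unstable disk in $\Lambda$ of radius $R$ is $\eps$-dense in $\Lambda$. This is proved by contradiction: if radius-$m$ disks $D_m$ around $z_m$ all miss $B_\eps(y_m)$, take limits $z_m\to z$ and $y_m\to y$. Then the whole leaf through $z$ misses $B_{\eps/2}(y)$, contradicting density, using continuity of the unstable foliation on compact pieces.

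Now take $D_0\subset U$ as before. For all $N$ with $\lambda^N\rho>R$, the disk $f^N(D_0)$ contains an $R$-disk in $\Lambda$, which meets $V$. So $f^N(U)\cap V\neq\emptyset$ for all large $N$, which is mixing.

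The only delicate step is the compactness argument for uniform density. It relies on local leaves $\cW^u_R(z)$ varying continuously with $z$, which holds for the strong unstable foliation.
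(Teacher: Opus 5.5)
Your argument is correct and follows the route the paper intends: the paper leaves this corollary as an exercise deduced from Lemma~\ref{p.fminimalit}, and your mixing step is the $n=0$ analogue of that lemma for a minimal lamination (uniform $\eps$-density of large unstable disks), combined with uniform expansion along $\cW^u$. You are also right that $f(\Lambda)=\Lambda$ must be assumed for the second claim: in the paper's own Example with $\alpha$ irrational, the minimal laminations $\TT^2\times\{t\}$ are not $f$-invariant.
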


\subsection{Quasi-attractors and $C^1$-generic dynamics}\label{ss.prelimgeneric}
Let $Q$ be a non-empty invariant compact set.
It is a \emph{transitive attractor} if it can be written as $Q= \bigcap f^n(U)$ where $U$ is an open set satisfying
$f(\overline U)\subset U$ and if $Q$ contains a dense forward orbit.
It is a \emph{quasi-attractor} if it can be written as
$Q= \bigcap_n U_n$ where $U_n$ are open sets satisfying $f(\overline{U_n}) \subset U_n$,
and if no proper subset $Q'\subsetneq Q$ has this property. (Equivalently it satisfies the following: given any $x\in Q$, a point $y$ belongs to $Q$ if and only if for any $\varepsilon>0$ there exists a $\varepsilon$-pseudo-orbit from $x$ to $y$.) The quasi attractor $Q$ is \emph{proper} if it is not equal to $M$.

\begin{obs} If $f$ is partially hyperbolic, then every quasi-attractor $Q$
is an unstable lamination.
Indeed, $Q$ is $f$-invariant, and for every $\delta>0$ the leaf $\cW^u(x)$ is the limit of the plaques $f^n(\cW^u_{\delta}(f^{-n}(x)))$. Given a trapping region $U \supset \Lambda$,
there is $\delta$ such that for every $z \in \Lambda$ we have $\cW^u_\delta(z) \subset U$, hence $\cW^u(x)\subset U$ for each $x\in \Lambda$. (See also \cite[Lemme 5.2]{BC-rec}.) In particular, every quasi-attractor contains at least one $f$-minimal unstable lamination. 
\end{obs}

The following result is essentially contained in \cite[\S{5}]{BC-rec} and gives a weak converse to the previous remark in the generic setting.

\begin{prop}\label{prop-QAgeneric}
There exists a dense $G_\delta$ subset $\cG \subset \mathrm{PH}^1_{c=1}(M)$ such that if $f \in \cG$ and if $\Lambda$ is an $f$-minimal unstable lamination which verifies one of the following properties:
\begin{itemize}
\item $\Lambda$ contains a periodic point $p$, or, 
\item the limit $\omega(y)$ is included in $\Lambda$ for every $y$ in a non-empty open set,
\end{itemize} \noindent then $\Lambda$ is a quasi-attractor.
\end{prop}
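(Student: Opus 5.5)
Both cases will be handled with the connecting lemma for pseudo-orbits of \cite{BC-rec}. The generic set $\cG$ is taken as the intersection of the standard residual sets from that theory: every chain-recurrence class containing a hyperbolic periodic point $p$ coincides with its homoclinic class $H(p)$; the chain-recurrence classes vary semicontinuously; and for every $x$, $\omega(x)$ is contained in a chain-recurrence class. We also use the generic fact from \cite[\S5]{BC-rec} that a chain-recurrence class containing the unstable set of one of its points, or attracting the forward orbits of an open set, is a quasi-attractor. The reduction common to both cases is this: it suffices to find a quasi-attractor $Q$ with $Q\cap\Lambda\neq\emptyset$ and $Q\subset\Lambda$. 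Minimality does the rest, because $Q$ is an $f$-invariant unstable lamination (Remark above), and $f$-minimality of $\Lambda$ then forces $Q=\Lambda$.

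\emph{Periodic case.} Let $p\in\Lambda$ be periodic; generically it is hyperbolic. Since $\Lambda$ is $\cW^u$-saturated, $\cW^u(p)\subset\Lambda$. By Lemma~\ref{p.fminimalit}, iterates of any large unstable disk in $\Lambda$ are $\eps$-dense in $\Lambda$, so every point of $\Lambda$ is reached by $\eps$-pseudo-orbits from $p$ (go along $\cW^u(p)$, then iterate). Conversely, $\Lambda$ is transitive (Corollary~\ref{c.minimaltransitive}), so $\Lambda$ lies in the chain-recurrence class $C(p)$.

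\emph{Main obstacle.} We must show that $C(p)$ does not escape $\Lambda$, i.e.\ that nothing outside $\Lambda$ is chain-reachable from $\Lambda$. The plan is to use \cite[Lemme 5.2]{BC-rec}-type arguments. Generically, the chain-unstable set of $p$ equals the closure of $W^u(p)$, by the connecting lemma for pseudo-orbits applied to the chain-unstable set. The center direction is the delicate part, since $W^u(p)$ may be larger than $\cW^u(p)$ when $p$ has center-unstable index. Here we use that $\overline{W^u(p)}$ is saturated and that points of $W^u(p)$ accumulate, under backward iteration, on $\cW^u$-leaves in $\Lambda$. Generically, the closure of the unstable manifold of $p$ is then the quasi-attractor containing $p$ (\cite[\S5]{BC-rec}). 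This closure is contained in $\Lambda$ as long as the $W^u(p)$ direction stays in $\Lambda$; otherwise we replace $p$ by a periodic point of the other index given by the $C^1$ generic Kupka--Smale and homoclinic arguments. I expect this index issue to be the hardest point.

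\emph{Open-set case.} Let $V$ be open with $\omega(y)\subset\Lambda$ for all $y\in V$. Generically, \cite{BC-rec} yields that for a generic $y$ in an open set, $\omega(y)$ is a quasi-attractor; this uses that generic points of $M$ have $\omega$-limits which are quasi-attractors, a residual subset of $M$. Pick such a $y\in V$. Then $Q=\omega(y)$ is a quasi-attractor contained in $\Lambda$, and the minimality reduction gives $Q=\Lambda$.
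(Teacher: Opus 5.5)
Your open-set case is fine and essentially matches the paper. The paper gets a quasi-attractor inside $\Lambda$ from \cite[Corollaire 1.8]{BC-rec} and concludes by $f$-minimality, exactly as you do.

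The periodic case has a genuine gap, located exactly at what you call the main obstacle, and your fallback cannot close it. A quasi-attractor contains the unstable set of each of its points. So if some periodic $p\in\Lambda$ has $W^u(p)\not\subset\Lambda$, then $\Lambda$ is not a quasi-attractor, whichever other periodic point you select. This configuration has to be \emph{excluded}, not bypassed, and it really occurs for non-generic systems. Take $f=A\times h$ on $\TT^2\times S^1$, with $A$ hyperbolic and $h$ a north--south map with hyperbolic repelling point $N$ and $h'$ between the two eigenvalues of $A$. Then:
\begin{itemize}
\item $\Lambda=\TT^2\times\{N\}$ is a minimal unstable lamination;
\item all its periodic points have expanded center, so there is no other index to switch to;
\item $W^u(p)\not\subset\Lambda$;
\item $\overline{W^u(p)}=M$ is chain-stable but is not a quasi-attractor.
\end{itemize}
So the identification ``$\overline{W^u(p)}$ is the quasi-attractor containing $p$'' is not supplied by \cite{BC-rec}. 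Generically, \cite{BC-rec} only gives that $\overline{W^u(p)}$ is the chain-unstable set of $p$. That it is a quasi-attractor is essentially the statement to be proved, and your sketch contains no mechanism that rules out the configuration above.

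The missing idea is to use genericity on the \emph{strong} unstable manifold, which lies in $\Lambda$ for free by saturation.
\begin{enumerate}
\item The map $g\mapsto\overline{\cW^u_g(p_g)}$ is lower semicontinuous, so a generic $f$ is a continuity point of it.
\item Suppose some $y$ outside the orbit of $\overline{\cW^u(p)}$ were reachable from $p$ by $\varepsilon$-pseudo-orbits for every $\varepsilon>0$. The connecting lemma for pseudo-orbits of \cite{BC-rec} would then give $g_n\to f$ such that the closures of the strong unstable manifolds of the orbit of $p_{g_n}$ all contain $y$. This contradicts continuity at $f$.
\item Hence everything chain-reachable from $p$ lies in the orbit of $\overline{\cW^u(p)}$, which is contained in $\Lambda$.
\item Since $\Lambda$ is chain-transitive (Corollary~\ref{c.minimaltransitive}), every point chain-reachable from a point of $\Lambda$ is chain-reachable from $p$, hence lies in $\Lambda$. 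So $\Lambda$ is a quasi-attractor.
\end{enumerate}
This is the paper's argument. No index distinction is needed, and $W^u(p)\subset\Lambda$ comes out as a consequence rather than being required as an input.
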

\begin{obs}
The same result holds among volume-preserving diffeomorphisms with a stronger conclusion (since there is no proper quasi-attractor): there is a dense $G_\delta$ subset $\cG \subset \mathrm{PH}^1_{c=1, \mathrm{vol}}(M)$ such that if $f \in \cG$ and if $\Lambda\neq M$ is an $f$-invariant lamination, then $\Lambda$ does not contain any periodic point. (For the perturbation results in the conservative setting, see \cite[\S 6]{BC-rec}.) 
\end{obs}

\begin{proof}
Let us first assume that $\Lambda$ contains a periodic point $p$.
The generic diffeomorphism $f$ is a continuity point of the map
$g \mapsto \overline{\cW^u_g(p_g)}$. If $\overline{\cW^u(p)}$ is not a quasi-attractor, then the connecting lemma for pseudo-orbits in~\cite{BC-rec} allows to build a sequence of diffeomorphisms $g_n\to f$ such that the sets $\overline{\cW^u_{g_n}(p_{g_n})}$
contain a same point $y\not\in \overline{\cW^u(p)}$; this contradicts the continuity at $f$.
This concludes the proof of the proposition under the first assumption.

Let us now assume that $\omega(y) \subset \Lambda$ for every $y$ in a non-empty open set. Then \cite[Corollaire 1.8]{BC-rec} concludes that $\Lambda$ contains a quasi-attractor $C$.
Since $C$ contains an $f$-minimal unstable lamination, we get $C=\Lambda$, concluding the proof in this case also.
\end{proof}

The next statement is~\cite[Theorem 15]{CP}. (The conservative version is simpler, see \cite{BC-rec}.)
\begin{teo}\label{t.palis}
Let $f$ be a diffeomorphism in a dense $G_\delta$ subset of $\mathrm{PH}^1_{c=1}(M)$. If $\Lambda$ is a quasi-attractor which is not uniformly hyperbolic, then it contains hyperbolic periodic points with stable dimension $\dim(E^s)$ and $\dim(E^s)+1$.
\end{teo}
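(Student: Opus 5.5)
Since this is \cite[Theorem 15]{CP}, I only sketch how I would re-derive it from standard $C^1$-generic tools. I would first restrict to a dense $G_\delta$ set of diffeomorphisms satisfying three properties:
\begin{itemize}
\item quasi-attractors are chain-recurrence classes (this is the genericity of \cite{BC-rec});
\item a chain-recurrence class containing a hyperbolic periodic point $p$ coincides with its homoclinic class, and contains every periodic point that is Hausdorff-close to it and chain-related to it;
\item every ergodic measure supported on a chain-recurrence class is the limit, both weak-$*$ and in Hausdorff distance, of periodic orbits whose Lyapunov exponents converge to those of the measure (the ergodic closing lemma combined with genericity, as in Abdenur--Bonatti--Crovisier).
\end{itemize}

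The core case analysis uses the center bundle. On $\Lambda$ the splitting $E^s\oplus E^c\oplus E^u$ has $E^s$ uniformly contracted and $E^u$ uniformly expanded. If every invariant measure on $\Lambda$ had negative center exponent, a standard semi-uniformity argument (the Birkhoff averages of $\log\|Df|_{E^c}\|$ being negative for all invariant measures) would make $E^c$ uniformly contracted, so $\Lambda$ would be hyperbolic. The same holds for positive exponents and expansion. Since $\Lambda$ is not hyperbolic, it therefore carries an ergodic measure $\mu^+$ with center exponent $\geq 0$ and an ergodic measure $\mu^-$ with center exponent $\leq 0$; they may coincide if the exponent is zero.

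Next I would approximate $\mu^\pm$ by periodic orbits $O^\pm$ lying in a small neighbourhood of $\Lambda$ with center exponents close to those of $\mu^\pm$. The periodic orbits must then be placed inside $\Lambda$. Because $\Lambda$ is a quasi-attractor, it is Lyapunov stable and saturated by unstable leaves, so the unstable manifold of a periodic point close to $\Lambda$ accumulates on $\Lambda$. By the connecting lemma for pseudo-orbits and genericity, a periodic orbit that is Hausdorff-close to $\Lambda$ and shadows a measure of $\Lambda$ is chain-related to $\Lambda$, hence belongs to it. A Franks-lemma perturbation, which can move a weak center exponent to either sign, together with a Kupka--Smale/semicontinuity argument then shows that, generically, $\Lambda$ contains hyperbolic periodic points of stable dimension $\dim E^s$, with center expanding, and of stable dimension $\dim E^s+1$, with center contracting. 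When only one sign of exponent appears but hyperbolicity fails, the exponents approach zero, and Franks' lemma produces both indices from a single weak orbit.

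\textbf{Main obstacle.} The delicate step is guaranteeing that the approximating periodic orbits, and their perturbed index-changed versions, really lie in $\Lambda$ and not merely near it. This is where Lyapunov stability of quasi-attractors and the genericity of chain-class continuity must be used carefully. Here I would follow the Crovisier--Pujals approach of \cite{CP}.
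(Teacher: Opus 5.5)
The paper gives no argument for this statement; it imports it verbatim as \cite[Theorem 15]{CP}. Your preliminary reduction is fine: a non-hyperbolic chain-transitive $\Lambda$ carries ergodic measures $\mu^+,\mu^-$ with center exponents $\geq 0$ and $\leq 0$. The gap is the step that places periodic orbits inside $\Lambda$, which is wrong as stated. Neither the connecting lemma for pseudo-orbits nor genericity turns ``Hausdorff-close to $\Lambda$ and weak-$*$ close to a measure on $\Lambda$'' into ``chain-related to $\Lambda$''. Nothing produces pseudo-orbits from $\Lambda$ back to the nearby orbit. In general the claim is false: $C^1$-generic diffeomorphisms can have aperiodic chain-recurrence classes. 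By the very approximation result you quote, their ergodic measures are weak-$*$ and Hausdorff limits of periodic orbits, none of which lie in the class. Your second genericity bullet is a tautology and does not help.

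What does work is geometric, and only when the exponents are non-zero.
\begin{itemize}
\item If the center exponent of $\mu^-$ is negative, the approximating periodic orbits have center-stable manifolds of uniform size \cite[\S 8]{ABC}. These cross the $\cW^u$-leaves of $\Lambda$, so the orbits belong to $\Lambda$; this is the argument of Proposition~\ref{prop-lyapunovexponents}.
\item If the center exponent of $\mu^+$ is positive, $\mu^+$-typical points have center-unstable manifolds of uniform size at Pliss times. These are contained in $\Lambda$ by Lyapunov stability of the quasi-attractor, and the strong stable leaves of nearby center-expanding periodic orbits cross them.
\end{itemize}

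When a center exponent vanishes, neither mechanism is available. This is exactly the situation of your last sentence. It includes the case where every ergodic measure on $\Lambda$ has zero center exponent, in which $\Lambda$ is not even known to contain a periodic point. There are then no uniform invariant manifolds, and a Franks-lemma perturbation of a weak orbit only yields orbits of the desired index \emph{near} $\Lambda$.

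Keeping such orbits inside the class would require already knowing two things: a periodic point $p\in\Lambda$ with $\Lambda$ equal to the homoclinic class of $p$, and weak periodic orbits inside that class. That is what has to be proved. This zero-exponent case is the real content of \cite[Theorem 15]{CP}. Your ``main obstacle'' paragraph defers precisely this case, so the sketch proves nothing beyond the citation the paper already makes.
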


\begin{cor}\label{c.palis}
In the setting of Theorem~\ref{t.palis}, there exists a uniformly hyperbolic set $K\subset \Lambda$ with expanding center bundle $E^c|_K$ and a non-empty open set $U$ of $\Lambda$ such that for any $y\in U$ the unstable leaf $\cW^u_{loc}(y)$ intersects
some $\cW^s_{loc}(x)$, $x\in K$.
\end{cor}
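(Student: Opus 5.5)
Taking $f$ generic as in Theorem~\ref{t.palis} and $\Lambda$ a non-hyperbolic quasi-attractor, the plan is to take $K$ to be the orbit of a hyperbolic periodic point $q\in\Lambda$ of stable dimension $\dim(E^s)$, which Theorem~\ref{t.palis} provides. For such a point the stable bundle is exactly $E^s$ and the center direction is expanded, so $E^c|_K$ is expanding. Then $\cW^s(q)$ is the strong stable leaf, $\cW^u(q)\subset\Lambda$ (since $\Lambda$ is an unstable lamination), and $K$ is a finite hyperbolic set with expanding center. (If one wants $K$ less trivial, one could take a horseshoe from a homoclinic class, but the periodic orbit should suffice.)

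The key step is to produce the open set $U$ of $\Lambda$. Since $\Lambda$ is a quasi-attractor for a generic $f$, it is a chain-recurrence class and in particular chain-transitive, so the plan is to exploit the structure of $\Lambda$ near $q$. I will use the point of stable dimension $\dim(E^s)+1$ as well: for generic $f$, $\Lambda$ contains a periodic point $p$ of stable index $d_s+1$, and $\cW^u(p)$ is a strong unstable leaf of dimension $d_u$ contained in $\Lambda$. By Lemma~\ref{p.fminimalit} applied to an $f$-minimal unstable lamination inside $\Lambda$ (generically, by Proposition~\ref{prop-QAgeneric}, $\Lambda$ coincides with the $f$-minimal lamination containing a periodic point, hence is itself $f$-minimal), unstable disks of radius $r$ in $\Lambda$ become $\eps$-dense in $\Lambda$ after boundedly many iterates. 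In particular, for every $y\in\Lambda$ some iterate $f^{j}(\cW^u_{r}(y))$, with $0\le j\le n$, comes $\delta_0$-close to $q$. By the local product structure it then meets $\cW^s_{loc}$ of the orbit of $q$ if it also lies in the center–unstable "sector" of $q$. Since $q$ has one-dimensional expanding center, its local unstable manifold is a $(d_u+1)$-disk, and its strong stable manifold of size $r_0$ meets every center-unstable disk near $q$.

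The main obstacle is this last point. An unstable plaque near $q$ has only dimension $d_u$, so it meets $\cW^s_{loc}(q)$ only if it lies on the right center level, which is a codimension-one condition. To handle this I will instead use the center-unstable manifold $W^u(q)$ (of dimension $d_u+1$), which is contained in $\Lambda$ because $\Lambda$ is a quasi-attractor. The set of $y\in W^u_{loc}(q)$ lies in $\Lambda$ and is foliated by strong unstable plaques, each of which meets $\cW^s_{loc}$ of a nearby point of $K$ once $K$ is enlarged to a small horseshoe. So I will take $K$ to be a hyperbolic basic set containing $q$ with expanding center, obtained from transverse homoclinic points of $q$ (these exist generically by Kupka–Smale/connecting arguments, since $W^u(q)\subset\Lambda$ accumulates on $q$ when $q$ is not isolated). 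Then the points of $K$ have strong stable leaves filling a Cantor family of center levels, and I expect some flexibility in choosing $U$.

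Because the center levels met by $K$ form only a Cantor family, I still need an open piece of $\Lambda$ rather than just $W^u(q)$. I will argue that $W^u(q)$ has nonempty interior in $\Lambda$ relative to $\Lambda$: otherwise $\Lambda$ accumulates on $W^u_{loc}(q)$ from both center sides, and the generic connecting lemma would create index $d_s+1$ behavior contradicting the expansion at $q$. If this fails, I will instead take $U$ to be a small neighborhood in $\Lambda$ of a point of $W^u_{loc}(q)$ and check directly, via the local product structure, that every strong unstable plaque through $U$ crosses $\cW^s_{loc}$ of the continuation orbit of $q$ or of a nearby point of $K$.
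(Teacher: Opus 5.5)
Your proposal has a genuine gap. You correctly isolate the obstacle: the dimensions of a plaque $\cW^u_{loc}(y)$ and a leaf $\cW^s_{loc}(x)$ add up to $\dim M-1$, so the local product structure never forces them to meet (it only intersects $\cW^s_{loc}$ with $(d_u+1)$-dimensional center-unstable disks). None of your proposed fixes overcomes this.

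Enlarging the orbit of $q$ to a horseshoe $K$ built from transverse homoclinic points does not help. Consider the points of $W^u_{loc}(q)$ whose strong unstable plaque meets $\cW^s_{loc}(K)$; they form the union of the plaques through $W^u_{loc}(q)\cap\cW^s_{loc}(K)$. In the one-dimensional quotient of $W^u_{loc}(q)$ by its $\cW^u$-plaques, this set is the projection of a Cantor set, and for a general horseshoe it is again a Cantor set, hence nowhere dense. Since $W^u_{loc}(q)\subset\Lambda$, every relatively open subset of $\Lambda$ meeting $W^u_{loc}(q)$ contains points at levels in the gaps, and their plaques miss $\cW^s_{loc}(K)$. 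So no $U$ of the kind you describe works, whether or not $W^u(q)$ has interior in $\Lambda$. That interior claim is in any case unjustified and false in general: when $\Lambda=M$, the immersed $(d_u+1)$-dimensional manifold $W^u(q)$ has empty interior. Nor is there a contradiction to extract from ``index $d_s+1$ behavior'', since $\Lambda$ already contains the point $p$ of stable dimension $d_s+1$ given by Theorem~\ref{t.palis}.

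The missing idea is a \emph{blender}, and this is where $p$, which you introduce but never use, becomes essential.
\begin{enumerate}
\item Since $p$ and $q$ lie in the same chain-recurrence class, the connecting lemma for pseudo-orbits shows that $\Lambda$ is the chain-recurrence class of both.
\item By $C^1$-perturbation one creates a heterodimensional cycle between them, and from it a blender $K$ containing $q$. Thus $K$ has stable dimension $d_s$, $E^c|_K$ is expanding, and $K\subset\Lambda$.
\item The blender is activated by $p$: there is $N\geq 1$ such that $f^N(\cW^u_{loc}(p))$ crosses the superposition region of $K$. In that region every $d_u$-disk tangent to the strong unstable cone meets $\cW^s_{loc}(x)$ for some $x\in K$.
\item These properties are $C^1$-robust, so after shrinking the dense $G_\delta$ set they hold for $f$ itself.
\item For $y\in\Lambda$ close to $p$, the disk $f^N(\cW^u_{loc}(y))$ is $C^1$-close to $f^N(\cW^u_{loc}(p))$. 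Hence it crosses the superposition region and meets $\cW^s_{loc}(K)$.
\end{enumerate}
The open set $U$ is therefore a neighborhood of $p$ in $\Lambda$ (after applying $f^N$), not of a point near $q$. Your appeal to Lemma~\ref{p.fminimalit} to reach every $y\in\Lambda$ is not needed here; that propagation is done later, in the proof of Theorem~\ref{T.SHorQA}.
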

\begin{proof}
Let us consider two periodic points $p,q\in \Lambda$ with stable dimensions $\dim(E^s)+1$ and $\dim(E^s)$ respectively.
Since they belong to a common quasi-attractor, they are contained in a same chain-recurrence class.
The connecting lemma for pseudo-orbits in~\cite{BC-rec} implies that $\Lambda$ coincides with the chain-recurrence classes of $p$ and $q$. By $C^1$-perturbations one can create a blender, see~\cite[Section 6.2]{BDV}, which contains $q$ and is activated by $p$. This is a robust property and, up to restrict the dense G$_\delta$ subset, the diffeomorphism satisfies the following property: there exists $N\geq 1$ and a hyperbolic set $K\subset \Lambda$ containing $q$ such that for any point $y\in \Lambda$ close to $p$ the plaque $f^N\cW^u_{loc}(y)$ intersects some leaf $\cW^s_{loc}(x)$ with $x\in K$.
\end{proof}

\section{Subsets without stable connections}\label{s.surface} 

In this section we deal with invariant sets which do not have \emph{stable connections}, i.e. sets which intersect each strong stable leaves in at most one point. The following is proved in~\cite{BC-Center}.
\begin{teo}\label{teo-surface}
Let $f$ be a partially hyperbolic diffeomorphism and $K$ be a compact $f$-invariant set satisfying $\cW^s(x) \cap K = \{x\}$ for every $x \in K$. Then, there exists a submanifold $S$ such that:
\begin{itemize}
\item[--] The manifold $S$ contains $K$, is tangent to $E^{c}\oplus E^u$
and is locally invariant: $f(S)\cap S$ contains a neighborhood of $K$ in $S$.
\item[--] For any diffeomorphism $g$ $C^1$-close to $f$, any $g$-invariant compact set $K_g$ in a small neighborhood of $K$ satisfies $\cW^s_g(x) \cap K_g = \{x\}$ for every $x \in K_g$. Moreover $K_g$ is contained in
a submanifold $S_g$ which is arbitrarily $C^1$-close to $S$ as $g$ gets closer to $f$.
\end{itemize}
\end{teo}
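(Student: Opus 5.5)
The plan is to build $S$ as a locally invariant center-unstable plaque family, glued along $K$ via the hypothesis that $K$ has no stable connections. First, using the partially hyperbolic splitting, take a locally invariant plaque family $\{\cW^{cu}_x\}_{x\in K}$ tangent to $E^c\oplus E^u$. This is the Hirsch--Pugh--Shub plaque family theorem, obtained by a graph transform in the exponential charts. These plaques satisfy $f(\cW^{cu}_{x,\rho})\subset \cW^{cu}_{f(x)}$ for small $\rho$, but they need not be unique, and they need not be coherent: plaques at nearby points of $K$ need not fit together.

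The key step is to prove that, because $\cW^s(x)\cap K=\{x\}$, for nearby $x,y\in K$ the point $y$ lies in $\cW^{cu}_x$. This can be arranged after a suitable choice of plaques, and then the plaques glue into a single $C^1$ submanifold. I would argue by contradiction. If points $x_n,y_n\in K$ converge with $d(x_n,y_n)\to0$ and $y_n$ is far from $\cW^{cu}_{x_n}$ relative to $d(x_n,y_n)$, then the displacement of $y_n$ has a definite component along $E^s$. By domination, iterating backwards makes the $E^s$ component grow relative to the $E^c\oplus E^u$ component. Taking the first backward time at which the distance reaches a fixed scale, and then passing to limits, produces distinct points $x,y\in K$ with $y\in\cW^s_{loc}(x)$. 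This contradicts the hypothesis.

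Applied at every scale, this gives a uniform cone condition: the vector from $x$ to $y$, for $x,y\in K$, lies in a thin cone around $E^c\oplus E^u$. The set $K$ is then contained in a $C^1$ graph over $E^{cu}$. To build it, I would use a Whitney-type extension, or equivalently choose the plaque family by a graph transform applied to a section containing $K$. That section is the projection of $K$ along local stable leaves, which is injective on $K$ by the hypothesis. Local invariance then follows from the invariance of the plaque family together with the cone estimates.

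For the robustness part, I would note that the contradiction argument above is uniform in the limiting process. If $g_n\to f$ and $K_{g_n}$ accumulates in a small neighbourhood of $K$ with stable connections $y_n\in\cW^s_{g_n}(x_n)$, then the backward-iteration argument gives a limit stable connection in some invariant compact set near $K$. To conclude I need that set to lie in $K$, or at least to have no stable connections. So I would first extend the statement to the maximal invariant set in a small neighbourhood: the hypothesis is open under shrinking neighbourhoods, by the same compactness argument. Continuity of the plaque families in $g$ then gives $S_g$ $C^1$-close to $S$.

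The main obstacle is the step that turns pointwise absence of stable connections into a uniform cone estimate at all small scales, and in particular handling the center direction, which has no uniform expansion. The domination $E^s\prec E^c$ should be enough to make the ratio argument work. What I would try first is a normalized-ratio sequence, followed by a limit argument in the Grassmannian of the displacement directions.
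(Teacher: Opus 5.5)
Your first step is sound, and contrary to what you say at the end it is the routine half of the statement. Suppose nearby $x,y\in K$ had a secant with a definite $E^s$-component relative to $\cW^{cu}_x$. Backward iteration amplifies that ratio by domination until the stable displacement reaches a fixed scale, and a limit gives a stable connection in $K$. The same backward-iteration-and-limit argument, with $g_n\to f$ and neighbourhoods shrinking to $K$, correctly gives the first robustness claim.

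All this yields, however, is uniform first-order tangency, $d(y,\cW^{cu}_x)=o(d(x,y))$, and hence (via Whitney) some $C^1$ submanifold $\Sigma\supset K$ tangent to $E^c\oplus E^u$ along $K$. The property you call the key step, $y\in\cW^{cu}_x$, is never proved (``can be arranged after a suitable choice of plaques''). Nor does ``local invariance then follows from the invariance of the plaque family together with the cone estimates'' follow. Center-unstable plaques are not unique when $E^c$ is contracted, so an HPS plaque at $x$ has no reason to contain nearby points of $K$. Conversely, $\Sigma$ contains $K$ but need not be locally invariant: already for $K$ a fixed point, a generic disc tangent to $E^c\oplus E^u$ is not. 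This local invariance is the real content of the theorem, which the paper does not prove but imports from \cite{BC-Center}.

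Your suggested repair, a graph transform started from a section containing $K$, does not work as stated. The iterates $f^n(\Sigma)$ do contain $K$, but when $E^c$ is contracted they are only guaranteed to contain a neighbourhood of $K$ of size of order $\|Df^n|_{E^c}\|$, so there is no limit manifold of uniform size. The usual HPS cure is to modify the dynamics away from $K$ so that the transform is globally defined, and that is exactly what can destroy the constraint that the graphs contain $K$. You need a construction that does both at once.

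One way to do this:
\begin{itemize}
\item Take sections $\sigma$ of a smooth tubular neighbourhood of $\Sigma$ with fibres close to $E^s$, in the closed class $\sigma|_K=0$, $\|\sigma(w)\|\le\epsilon\, d(w,K)$, $\mathrm{Lip}(\sigma)\le L$.
\item Set $\Gamma\sigma=\beta\,\hat\sigma$, where $f(\mathrm{graph}\,\sigma)=\mathrm{graph}\,\hat\sigma$ near $K$ and $\beta$ is a bump equal to $1$ near $K$ with $|D\beta|\le C\rho^{-1}$.
\item Since $f(\Sigma)$ and $\Sigma$ both contain $K$ and are tangent to $E^c\oplus E^u$ there, they are at distance $o(d(\cdot,K))$. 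Together with domination, this keeps the class invariant and the term $\hat\sigma\otimes D\beta$ small.
\item Contraction along $E^s$ then gives a fixed point that contains $K$, is locally invariant where $\beta=1$, and is $C^1$ by fibre contraction.
\end{itemize}
Until something of this kind is supplied, your argument stops at ``$K$ lies in a $C^1$ submanifold tangent to $E^c\oplus E^u$ along $K$''. For the same reason, the claim that $S_g$ is $C^1$-close to $S$ is unsupported. To get it, you must run the construction on the maximal invariant set of a neighbourhood, using that the cone estimate is uniform in $g$.
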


When $f$ is $C^1$-generic and $\dim E^c=1$, this can be combined with the results of \cite{PS-IHP} (see also \cite[Corollary 2.31]{CP} or \cite{CPS}).

\begin{teo}\label{teo-hyp}
There is a dense $G_\delta$ subset $\cG \subset \mathrm{PH}^1_{c=1}(M)$ (resp. $\cG \subset \mathrm{PH}^1_{c=1,\mathrm{vol}}(M)$) such that if $f \in \cG$, then any compact $f$-invariant set $K \subset M$ satisfying $\cW^s(x) \cap K = \{x\}$ for every $x \in K$ is hyperbolic.

If $E^c|_{K}$ is uniformly expanding, then $K$ is a finite set of hyperbolic saddles.
If $E^c|_{K}$ is uniformly contracting and $K$ is an unstable lamination, then $K$ is
an attractor: there exists a trapping region $U$ such that $\bigcap_n f^n(U)=K$.
\end{teo}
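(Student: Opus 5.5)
The plan is to reduce Theorem~\ref{teo-hyp} to the two–dimensional-type dominated setting treated in \cite{PS-IHP}, via the center-unstable submanifold of Theorem~\ref{teo-surface}, and then read off the two extreme cases from the dynamics of the center.

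\emph{Reduction.} Take $K$ compact, $f$-invariant, with no stable connections. Theorem~\ref{teo-surface} gives a locally invariant submanifold $S\supset K$ tangent to $E^c\oplus E^u$. On $S$ the restriction $f|_S$ is a local diffeomorphism near $K$ with a dominated splitting $E^c\oplus E^u$ over $K$, where $\dim E^c=1$ and $E^u$ is uniformly expanding. Applying the same argument to $f^{-1}$ (i.e. taking $f^{-1}$ restricted to $S$), we obtain a set with splitting $E^u\oplus E^c$ in which the strong bundle is contracting and the weak bundle $E^c$ is one-dimensional. This is exactly the framework of \cite{PS-IHP} (codimension one, center one-dimensional, extreme bundle hyperbolic; see also \cite[Corollary 2.31]{CP} and \cite{CPS}). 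Their $C^1$-generic conclusion is that such a set is hyperbolic, i.e.\ $E^c|_K$ is uniformly contracting or expanding on each piece.

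\emph{Genericity.} We need the robustness clause of Theorem~\ref{teo-surface}. It guarantees that for $g$ near $f$ and any $g$-invariant $K_g$ close to $K$, the set $K_g$ again lies in a surface-like submanifold $S_g$. Because of this, the perturbations used in \cite{PS-IHP} (Franks' lemma, closing lemma, and the selection of periodic points with weak center exponent) can be carried out inside the family $\PH^1_{c=1}(M)$, and also inside the volume-preserving class. A Baire argument then produces the residual set $\cG$: on $\cG$, every periodic orbit without stable connections has center exponent bounded away from $0$ near $K$, and generic continuity of the relevant Hausdorff-limit maps lets us transfer this to $K$.

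I expect this step to be the main obstacle. The difficulty is checking that \cite{PS-IHP} applies to a submanifold that is only locally invariant, where $S$ depends on $K$, and that the perturbations respect both the volume and the partially hyperbolic structure.

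\emph{The two extreme cases.}

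\begin{itemize}
\item If $E^c|_K$ is expanding, then $K$ is a hyperbolic set of $f|_S$ that is expanding in every direction of $S$. An expanding invariant set of a local diffeomorphism with local product structure in $S$ has no room for nontrivial recurrence: points of $K$ are isolated in $S$, and by compactness $K$ is finite. Hence $K$ is a finite union of hyperbolic periodic saddles of index $\dim E^s$.
\item If $E^c|_K$ is contracting and $K$ is an unstable lamination, then $K$ is a hyperbolic set with stable bundle $E^s\oplus E^c$ containing the unstable manifolds of its points. The standard criterion (a hyperbolic set containing its unstable manifolds is an attractor, via local product structure) yields a trapping neighborhood $U$ with $\bigcap_n f^n(U)=K$.
\end{itemize}
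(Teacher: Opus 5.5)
Your treatment of the two extreme cases is fine. In the expanding case a covering argument gives finiteness, since $f^{-1}$ uniformly contracts $S$ near $K$. In the contracting case $K$ is a hyperbolic set saturated by its unstable manifolds, hence has local product structure and is an attractor. This differs from the paper only in working directly in $M$ rather than in $S$ followed by normal contraction. The gap is in the first assertion. The result of \cite{PS-IHP} that is needed is a $C^2$ statement, for $C^2$ maps under hypotheses on periodic points and invariant curves. It is not a $C^1$-generic result whose ``perturbations'' could be transported to $\PH^1_{c=1}(M)$. Since the submanifold $S$ of Theorem~\ref{teo-surface} is only $C^1$, $f|_S$ is only $C^1$ even when $f$ is smooth, so nothing in your reduction lets you apply it. Your substitute for the genericity step does not close the gap either. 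Take a uniform bound on the center exponents of periodic orbits near $K$, and propagate it to all ergodic measures on $K$ by the ergodic closing lemma. This still gives neither uniform contraction or expansion of $E^c$, nor a splitting of $K$ into a center-contracting and a center-expanding part.

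The missing idea, which is the core of the paper's proof, is to produce a $C^2$ surface-like system by a $C^1$-small perturbation and then use that hyperbolicity is an open property. After reducing to smooth $f$, approximate $S$ in the $C^1$ topology by smooth submanifolds $S_k$. Choose smooth $h_k\to\mathrm{Id}$ with $h_k\circ f(S_k)\cap V\subset S_k$. For $g_k=h_k\circ f$ the maximal invariant set near $K$ lies in $S_k$, $g_k|_{S_k}$ can be made Kupka--Smale, and \cite{PS-IHP} applies. To return to generic $f$, take a countable family of small open sets $U_n$. Let $\cA_n$ be the open set of $g$ whose maximal invariant set $K_n(g)$ in $\overline{U_n}$ is hyperbolic, and set $\cG=\bigcap_n(\cA_n\cup(\overline{\cA_n})^c)$. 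For $f\in\cG$ the $g_k$ give $f\in\overline{\cA_n}$, hence $f\in\cA_n$. For a given $K$, pick $U_n\supset K$ so small that $K_n(f)$ still has no stable connection, by the robustness clause of Theorem~\ref{teo-surface}. In the volume-preserving class this further requires \cite{Avila} and Moser's trick, with $V$ chosen so that $M\setminus V$ is connected. That choice is possible because no closed component of $S$ is contained in $K_n(f)$. Your assertion that the perturbations ``can be carried out'' there does not address this.

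Finally, a caution on scope that affects any route through \cite{PS-IHP}: for arbitrary $K$, some extra restriction (recurrence, or excluding sources of $f|_S$) seems unavoidable. Let $q,p$ be hyperbolic fixed points with expanded, resp.\ contracted, center. The $(d_u+1)$-dimensional unstable manifold of $q$ and the $(d_s+1)$-dimensional stable manifold of $p$ can intersect transversally, which is a $C^1$-open situation, e.g.\ in Ma\~n\'e's derived-from-Anosov example. Take $x$ in this intersection and off $\cW^s(p)$. The set formed by $p$, $q$ and the orbit of $x$ is compact, invariant and has no stable connection, yet it is not hyperbolic. Indeed, the stable dimension on a hyperbolic set is continuous and invariant, while this orbit accumulates on both $p$ and $q$, whose stable dimensions differ.
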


\begin{proof}
Let us consider a countable basis of the topology of $M$, which consists in open sets
whose diameter is smaller than the scale  $\delta_0$ of the local product structure.
Let $\cB= \{U_n\}$ be the set of finite unions of elements of the base. Given $U_n \in \cB$
and a diffeomorphism $g$, we consider $K_n(g)$ the maximal invariant set of the closure of $U_n$. Let also $\cA_n \subset \mathrm{PH}^1_{c=1}(M)$ (resp. $\cA_n \subset \mathrm{PH}^1_{c=1,\mathrm{vol}}(M)$) be the open set of partially hyperbolic diffeomorphisms for which $K_n(g)$ is hyperbolic: $K_n(g)$ decomposes as a disjoint union of (possibly empty) invariant compact sets $K_n(g)=K_n^1(g) \cup K_n^2(g)$ such that $E^c$ is contracting on $K_n^1(g)$ and expanding in $K_n^2(g)$. Then $\cO_n := \cA_n \cup (\overline{\cA_n})^c$ is open and dense. We define $\cG = \bigcap_n \cO_n$ which is $G_\delta$ and dense.

\begin{af}
For any $f \in \cG$ and $U_n \in \cB$, if the set $K_n(f)$ satisfies $\cW^u_f(x) \cap K_n(f) = \{x\}$ for every $x \in K_n(f)$ then it is hyperbolic.
\end{af}

This concludes. Indeed if $f \in \cG$ and if $K$ is an $f$-invariant compact set which has no stable connection, we consider some $U_n\in \cB$ which is a small neighborhood of $K$.
By Theorem~\ref{teo-surface}, the set $K_n(f)$ has no stable connection, hence, by the claim, is hyperbolic. One deduces that $K\subset K_n(f)$ is hyperbolic.
When $E^c|_K$ is uniformly expanded,
the set $K$ is a uniformly expanded invariant set for the induced system $(S,f)$,
hence it is a finite union of sources for $f|_S$.
When $E^c|_K$ is uniformly contracted and $K$ is an unstable lamination,
it is a hyperbolic set saturated by its unstable manifolds for the induced system $(S,f)$,
hence is an attractor; since $f$ uniformly contracts transversally to $S$, the set $K$
is also an attractor for $(M,f)$.

\smallskip

It remains to prove the claim: we will show that there are arbitrarily $C^1$-small perturbations $g$ of $f$ such that $K_n(g)$ is hyperbolic; since $f$ belongs to the open set  $\cA_n \cup (\overline{\cA_n})^c$ this implies $f \in \cA_n$ and thus $K_n(f)$ is hyperbolic.

We first explain how to perturb in the non-conservative case and then how to adapt it to the conservative setting. Using Theorem \ref{teo-surface} we know that $K_n(f)$ is contained in a locally invariant submanifold $S$ for $f$ tangent to $E^c \oplus E^u$ at $K_n(f)$. By perturbation we can assume without loss of generality that $f\in \cA_n \cup (\overline{\cA_n})^c$ is smooth. By local invariance, there exists a neighborhood $V$ of $K_n(f)$ such that $f(S)\cap V\subset S$.
We can consider a sequence of smooth submanifold $S_k$ which converges to $S$ in the $C^1$-topology (and can be seen as a graphs over $S$ with respect to the projection along $\cW^s$). For each $k$ there exists a smooth diffeomorphism $h_k$ such that $h_k\circ f(S_k)\cap V\subset S_k$. One can furthermore require that $(h_k)$ converges to the identity in the $C^1$-topology.

We then set $g_k=h_k\circ f$. The maximal invariant set $K_{n}(g_k)$ is close to $K_n(f)$ and has to be contained in $S_k$ (because points outside $S_k$ separate from $K_n(g_k)$ by forward iterations up to a
uniform distance). The diffeomorphism $g_k$ can be chosen so that inside $S_k\cap V$ all periodic orbits are hyperbolic and each invariant one-dimensional compact submanifold contains a periodic orbit. It follows that one can apply the results of \cite{PS-IHP} to deduce that the set $K_n(g_k)$ is hyperbolic, concluding the proof. 

In the conservative case, the difficulty is to show that one can choose the perturbations $g_k$ as above to be still conservative. First, we can apply~\cite{Avila} and reduce to the case where the initial diffeomorphism $f$ is conservative and smooth. Note that there is no connected component $S'$ of $S$ which is a compact submanifold without boundary such that $K_n(f)\cap S'=S'$: this would imply that $S'$ is fixed by an iterate of $f$ and normally contracted, contradicting the volume-preservation. In particular the one can choose the neighborhood $V$ to be a submanifold with boundary such that $M\setminus V$ is connected. We build as before diffeomorphisms $h_k$ but require in addition that $h_k\circ f$ preserves the volume form on $V$. By applying Moser's trick (see~\cite[Theorem 5.1.27]{KH})
we can modify $h_k$ outside $V$ so that it is volume-preserving everywhere and still satisfies that $h_k\to \text{Id}$
for the $C^1$-topology. In particular $g_k$ is conservative too.
\end{proof}


\section{Joint integrability and transversality}\label{s.NJI}

\subsection{Definitions}\label{ss.def-transvserse}
We first discuss stable holonomy maps.

Given two points $x, y \in M$ which belong to the same stable manifold $\cW^s(x)=\cW^s(y)$ and small disks $D_x,D_y$ transverse to $E^s$ of dimension $d_{cu}$ and containing $x,y$ respectively, we consider a holonomy map $\Pi^s_{x,y} : U \subset D_x \to D_y$ along the leaves of $\cW^s$, where $U$ is a small neighborhood of $x$ that depends on $D_y$ (and the choice of a path from $x$ to $y$). For general foliations, the holonomy map may depend on the choice of an homotopy class of path $\gamma\subset \cW^s(x)$ connecting $x$ to $y$.
In our setting the leaves of $\cW^s$ are uniformly contracted by $f$, hence they are simply connected.
Consequently the homotopy class is unique and the holonomy map $\Pi^s_{x,y}$ is well-defined (i.e. given two paths, the holonomy coincides in the intersection of their domains).

There is a strong notion of not being jointly integrable which is implicit in \cite{CPoS}
(note that it includes laminations with no stable connections):

\begin{defi}\label{def.NJI}
We say that an unstable lamination $\Lambda$ is \emph{strongly non-jointly integrable} if for any $x\neq y$ in  $\Lambda$ with $\cW^s(x)=\cW^s(y)$ and for any transverse discs $D_x,D_y$
containing neighborhoods of $x$ and $y$ in $W^u_{loc}(x)$ and $W^u_{loc}(y)$,
the associated holonomy map $\Pi^s_{x,y}$ has the following property:
there exists $z\in \cW^u_{loc}(x)$ arbitrarily close to $x$ such that $\Pi^s_{x,y}(z) \notin \cW^u_{loc}(y)$.
\end{defi}

When $\dim E^c =1$ we can define another related notion which has been studied in \cite{ACW}. This definition is implicit in \cite{CP}. See also  \cite{CroPot}. 

\begin{defi}\label{def.transverse}
Given $\varepsilon>0$ small,
an unstable lamination $\Lambda$ is  $\eps$-\emph{transverse} if there exists $R>0$ such that every unstable disk inside $\Lambda$ with internal radius $R$ contains points
$x\neq y$ in the same local stable manifold so that the holonomy $\Pi^s_{x,y}$ maps $\cW^u_{\eps}(x)$ into both connected components of $D_y \setminus \cW^{u}_{loc}(y)$.  We say that $\Lambda$ is (\emph{locally}) transverse if it is $\eps$-transverse for all small $\eps$. 
\end{defi}

\begin{figure}[htbp]
\begin{center}
\includegraphics[scale=0.5]{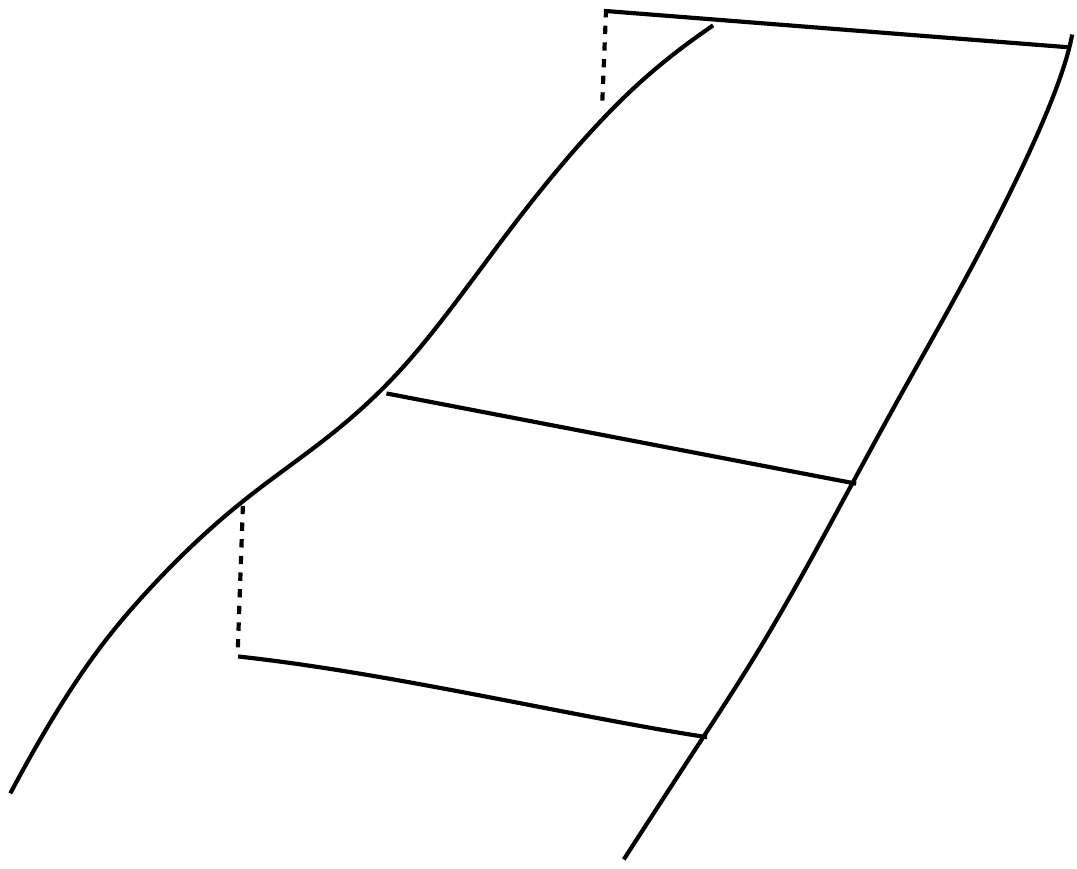}
\begin{picture}(0,0)
\put(-57,79){$y$}
\put(-250,16){\small$\cW^{u}(x)$}
\put(-100,7){\small$\cW^{u}(y)$}
\put(-180,115){$x$}
\put(-124,90){$\cW^{s}(x)$}
\end{picture}
\end{center}
\vspace{-0.5cm}
\caption{{\small Transversality of the foliation $\cW^u$.}\label{f.transverse}}
\end{figure}

We will only use this notion of transversality, so we will omit to mention the ``locally'' when referring to this concept. In \cite{ACW} a notion of global transversality is introduced which is relevant in many cases but that we shall not need. Note that $\eps$-transversality is a $C^1$-open property, while local transversality is a priori not.

\subsection{Finiteness}\label{ss.finiteness}

We state a consequence of \cite{CPoS}.

\begin{teo}\label{T.RobustSH}
There is a dense $G_\delta$ subset $\cG \subset \PH^1_{c=1}(M)$  (resp. $\cG_{\mathrm{vol}} \subset \PH^1_{c=1, \mathrm{vol}}(M)$) such that any $f \in \cG$ (resp. $f \in \cG_{\mathrm{vol}}$) has finitely many $f$-minimal unstable laminations; they are strongly non-jointly integrable.

Moreover, $f$ admits a neighborhood $\cU\subset\PH^1_{c=1}(M)$  (resp. $\PH^1_{c=1, \mathrm{vol}}(M)$)
such that any $g\in \cU$ has the same number of $f$-minimal unstable laminations as $f$,
denoted by $\Lambda_{1,g},\dots,\Lambda_{k,g}$. The maps $g\mapsto \Lambda_{i,g}$ are semi-continuous for the Hausdorff topology and are continuous at $f$.
\end{teo}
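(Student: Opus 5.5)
We derive Theorem~\ref{T.RobustSH} from the perturbative results of \cite{CPoS} by a semicontinuity argument, treating the conservative and dissipative cases in parallel (the conservative perturbations are those of \cite[\S 6]{BC-rec} and \cite{Avila}).

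\emph{Step 1: perturbative input and semicontinuity.} From \cite{CPoS} we assume the following. If an $f$-minimal unstable lamination $\Lambda$ is not strongly non-jointly integrable, there are $C^1$-small perturbations $g$ of $f$, which can be taken volume-preserving when $f$ is, for which every $g$-minimal unstable lamination near $\Lambda$ is strongly non-jointly integrable. Moreover, this configuration is robust enough that no accumulation of minimal laminations persists.

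The key semicontinuity is this. If $g_n\to f$ and $\Lambda_n$ are $g_n$-invariant unstable laminations, then any Hausdorff limit of the $\Lambda_n$ is an $f$-invariant unstable lamination. This holds because unstable plaques vary continuously with the diffeomorphism.

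Let $N(g)\in\{1,2,\dots,\infty\}$ be the number of $g$-minimal unstable laminations. Minimal laminations of a nearby $g$ lie close to $f$-invariant laminations, and by Lemma~\ref{p.fminimalit} the $f$-minimal ones are uniformly ``spread''. So distinct $f$-minimal laminations are pairwise disjoint compact sets, and each persists as at least one $g$-minimal lamination in its neighborhood. This gives lower semicontinuity of $N$ wherever $N$ is finite.

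\emph{Step 2: finiteness.} Suppose $f$ had infinitely many $f$-minimal unstable laminations. They are disjoint, so some sequence of them accumulates. The limit is a lamination $\Lambda_\infty$ containing an $f$-minimal $\Lambda'$, and nearby laminations $\Lambda_n$ approach $\Lambda'$ in the Hausdorff sense.

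This forces pairs of points $x\in\Lambda_n$ and $y\in\Lambda'$ on a common local stable leaf whose stable holonomy nearly maps $\cW^u_{loc}(x)$ into a plaque of $\Lambda'$. That is exactly the joint-integrability configuration excluded by the strong non-joint integrability of $\Lambda'$. Indeed, strong NJI together with the $C^1$-robust version from \cite{CPoS} gives an $\varepsilon$-neighborhood of $\Lambda'$ containing no other minimal lamination. This is an open condition, so it carries over to all $g$ near $f$.

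\emph{Step 3: the generic set.} For each $k$, let $\cO_k$ be the set of diffeomorphisms having a neighborhood on which every $g$ has at most $k$ minimal laminations, each contained in an isolating neighborhood as in Step~2. Take $\cG=\bigcap_k(\cO_k\cup \overline{\cO_k}^{\,c})$ intersected with the continuity points of the semicontinuous set-valued maps $g\mapsto\Lambda_{i,g}$.

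At a point of $\cG$, the perturbation result shows $f\in\cO_k$ for some $k$, and the number of minimal laminations is locally constant. Each $\Lambda_{i,g}$ is defined as the unique minimal lamination in the $i$-th isolating neighborhood, hence is upper semicontinuous. The continuity points of such a map form a residual set.

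\emph{Main obstacle.} The delicate step is Step~2: converting an accumulation of minimal laminations into a failure of strong NJI in a form robust under perturbation. Here I rely entirely on the structure theorem from \cite{CPoS}.
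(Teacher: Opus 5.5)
Your skeleton matches the paper's: finiteness and isolation from \cite{CPoS}, a Baire argument to make the count locally constant, then continuity points of a semicontinuous set-valued map. However, both semicontinuity claims you make point the wrong way, and as stated they are false.

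In Step~1 you assert that each $f$-minimal lamination persists as a nearby $g$-minimal lamination, so that $N$ is lower semicontinuous. Lemma~\ref{p.fminimalit} gives nothing of the sort. Take $f=A\times h$ on $\TT^2\times S^1$, with $A$ Anosov and $h$ close to the identity having a saddle-node fixed point $p$ and finitely many hyperbolic fixed points. Then $\TT^2\times\{p\}$ is $f$-minimal, and a perturbation of $h$ near $p$ either destroys it ($N$ drops) or splits it ($N$ jumps).

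What your Hausdorff-limit observation actually gives is the opposite inequality. A limit of $g_n$-minimal laminations is an $f$-invariant unstable lamination, hence contains some $\Lambda_i$. So for $g$ near $f$, every $g$-minimal lamination is $\varepsilon$-\emph{dense on} some $\Lambda_i$. The missing input is the uniform-in-$g$ version of the argument of \cite[\S 2.5]{CPoS}: for each $\Lambda_i$ there is $\varepsilon>0$ such that every $g$ near $f$ has \emph{at most one} minimal unstable lamination $\varepsilon$-dense on $\Lambda_i$. Together these give $N(g)\le N(f)$, hence local constancy on an open dense subset. A pigeonhole argument then shows that each such $g$ has exactly one lamination $\varepsilon$-dense on each $\Lambda_i$, and that is how $\Lambda_{i,g}$ must be defined.

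In Step~3 you instead define $\Lambda_{i,g}$ as the unique minimal lamination \emph{contained in} an isolating neighborhood of $\Lambda_i$, and call the map upper semicontinuous. Nothing prevents a $g$-minimal lamination from being much larger than $\Lambda_i$, so this $\Lambda_{i,g}$ need not exist. Moreover, if upper semicontinuity held, any Hausdorff limit as $g\to f$ would be an $f$-invariant unstable lamination inside the $f$-minimal set $\Lambda_i$, hence equal to it. Continuity would then be automatic at every point, and no residual set would be needed. The correct property is lower semicontinuity, and it is the residual set of its continuity points that yields continuity at $f$.

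Three smaller points:
\begin{itemize}
\item $f\in\bigcap_k(\cO_k\cup\overline{\cO_k}^{\,c})$ does not force $f\in\cO_k$ for some $k$. You must also intersect with the open dense set $\bigcup_k\cO_k$ coming from \cite[Theorem C]{CPoS}.
\item Strong non-joint integrability for $f\in\cG$ should be taken directly from \cite[Theorem 2.1]{CPoS}. Your perturbative formulation in Step~1 does not make it hold on a residual set, since it is not evidently an open condition.
\item Your Step~2 heuristic pairs a point of $\Lambda_n$ with a point of $\Lambda'$, which Definition~\ref{def.NJI} does not constrain. Like the paper, though, you ultimately defer finiteness to \cite{CPoS}.
\end{itemize}
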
 

\begin{proof}
By \cite[Theorem C] {CPoS}, there is a $C^1$-open and dense set $\cO$ in $\PH^{1}_{c=1}(M)$ such that for $f \in \cO$ there are finitely many minimal unstable laminations $\Lambda_1, \ldots, \Lambda_k$.
Note that the following semi-continuity holds: for any $\varepsilon>0$ and any $g$ $C^1$-close to $f$,
each unstable lamination $\Lambda$ of $g$ is $\varepsilon$-dense on one of the laminations
$\Lambda_i$ of $f$. Moreover the arguments in \cite[\S 2.5]{CPoS} are uniform with respect to the diffeomorphism
and show that for any lamination $\Lambda_i$ there exists $\varepsilon>0$ such that a diffeomorphism $g$ close to $f$ admits at most one minimal unstable lamination which is $\varepsilon$-dense on $\Lambda_i$.
Consequently the number of minimal unstable lamination is a semicontinuous function, hence is locally constant on an open and dense subset $\cO' \subset \cO$.

Each $g$ close to $f$ admits exactly one minimal unstable lamination
which is $\varepsilon$-dense on $\Lambda_i$, it is denoted by $\Lambda_{i,g}$.
As noticed above, the maps $g\mapsto \Lambda_{i,g}$ are lower semi-continuous for the Hausdorff topology.
Therefore, there is a dense G$_\delta$ subset $\cG\subset\cO'$ where these maps are continuous.
The strong non-joint integrability is given by \cite[Theorem 2.1]{CPoS}.

The proof in the volume-preserving case is the same as the results \cite{CPoS} are obtained via conservative perturbations.
\end{proof}

The following is \cite[Proposition 1.5]{ACW} and is a variant of~\cite{CPoS}.

\begin{prop}\label{prop-njifinite}
If an $f$-minimal unstable lamination is transverse, then it is a finite union of minimal unstable laminations. 
\end{prop}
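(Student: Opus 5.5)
The plan is to exploit the fact that $\Lambda$ is $f$-minimal, so $f$ permutes its minimal unstable sublaminations, and to combine this with the finiteness mechanism of \cite{CPoS}, made uniform by transversality.

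\textbf{Step 1: a minimal sublamination has a non-empty interior relative to $\Lambda$.} Let $\Lambda_0\subset\Lambda$ be a minimal unstable lamination. By Lemma~\ref{p.fminimalit}, unstable disks of radius $R$ in $\Lambda_0$ have finitely many iterates that are $\eps$-dense in $\Lambda$. The key claim is that $\Lambda_0$ contains every point of $\Lambda$ close enough to $\Lambda_0$ in the transverse direction.

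Let $z\in\Lambda$ be close to some $x\in\Lambda_0$. By $\eps$-transversality, any unstable disk of radius $R$ in $\Lambda_0$ contains points $x'\neq y'$ on a common local stable leaf. Their holonomy $\Pi^s_{x',y'}$ sends $\cW^u_\eps(x')$ to both sides of $\cW^u_{loc}(y')$ in the center direction. Since $\dim E^c=1$, the center-unstable transversal is split by the unstable plaque into exactly two sides, and this two-sidedness is what we use.

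Consider the unstable leaf of $z$ and its local stable holonomy images. Nearby leaves of $\Lambda$ that pass between these two sides must intersect stable leaves through $\Lambda_0$. Iterating forward, stable distances contract, while the leaf through $z$ accumulates on a closed unstable set. Hence every accumulation point of $\cW^u(z)$ lies in $\Lambda_0$. Minimality of the closure of that leaf then gives $\overline{\cW^u(z)}\supset\Lambda_0$.

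To obtain containment in the other direction, we use the same argument with the roles of $z$ and $\Lambda_0$ swapped. This follows the strategy of \cite[\S2.5]{CPoS}: minimal unstable laminations are separated by a uniform distance. The uniform constant $R$ given by transversality replaces the genericity used there.

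\textbf{Step 2: uniform separation.} Conclude that there is $\delta>0$ such that two distinct minimal unstable laminations contained in $\Lambda$ are at Hausdorff distance at least $\delta$. More precisely, their union is disjoint from the $\delta$-neighborhood of the other.

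\textbf{Step 3: finiteness.} By compactness, $\Lambda$ contains only finitely many minimal unstable laminations. The map $f$ permutes them, so their union is a compact $f$-invariant unstable lamination contained in $\Lambda$. By $f$-minimality of $\Lambda$, this union equals $\Lambda$.

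\textbf{Main obstacle.} The hardest point is Step 1, namely turning the local transversality at scale $\eps$ into a uniform separation between distinct minimal sublaminations. A nearby leaf might lie on one side only. The $\eps$-transversality gives crossings in both directions inside every $R$-disk, and this must force any leaf within $\delta\ll\eps$ of $\Lambda_0$ to meet a stable leaf of $\Lambda_0$ in a way that survives forward iteration. I would use forward iteration, under which unstable disks grow and stable distances shrink, and pass to the limit.
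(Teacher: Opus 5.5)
\textbf{There is a genuine gap in Step 1.} It sits at the sentence \emph{Iterating forward, stable distances contract\dots}.

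The mixed crossing you describe is fine, once one caveat is handled. $\eps$-transversality gives no uniform lower bound on how much the holonomy crosses, so ``$z$ close enough'' must be measured against one fixed configuration $x_*,y_*$ with a definite crossing. With that, it produces $a\in\Lambda_0$ and $b\in\cW^u(z)$ on a common local stable leaf.

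Forward iteration, however, only gives $d(f^n(a),f^n(b))\to 0$ with $f^n(a)\in f^n(\Lambda_0)$. The lamination $\Lambda_0$ is not known to be $f$-invariant; proving that some iterate of it returns to itself is exactly the content of the proposition. So nothing follows about $\Lambda_0$. Even for invariant $\Lambda_0$, this would describe $\omega(b)$, not the accumulation set of the leaf $\cW^u(z)$ at time zero.

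The later inferences fail as well:
\begin{enumerate}
\item The closure of a leaf need not be minimal.
\item $\Lambda_0\subset\overline{\cW^u(z)}$ does not give $z\in\Lambda_0$, since a leaf may accumulate on $\Lambda_0$ without lying in it.
\item Swapping the roles of $z$ and $\Lambda_0$ only reproduces the same stable connection, not a reverse inclusion.
\end{enumerate}
Hence the uniform separation of Step 2 is unsupported. It is essentially equivalent to the conclusion, and the constant $R$ alone does not provide a uniform scale.

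\textbf{The missing idea} is to set up the crossing at the backward time $-m$ for every $m$, using $f$-minimality, and only then push forward. The paper quotes the statement from \cite[Proposition 1.5]{ACW}, but the mechanism is written out in the lemma that ends the proof of Proposition~\ref{p.transverse} (see also Remark~\ref{r.uniqueness-minimal}). It runs as follows.
\begin{enumerate}
\item Fix a crossing pair $x_*,y_*$ that is robust at some scale $\delta$.
\item By Lemma~\ref{p.fminimalit} there is $N$ such that for every $m$ the union of the $f^{-m-i}(\Lambda_0)$, $0\le i\le N$, is $\delta$-dense in $\Lambda$. So some $f^{-m-j}(\Lambda_0)$ passes near $x_*$ and some $f^{-m-k}(\Lambda_0)$ passes near $y_*$.
\item By connectedness, the holonomy of the first plaque meets the second, so these two laminations share a local stable leaf.
\item Applying $f^m$, the distance between the minimal laminations $f^{-j}(\Lambda_0)$ and $f^{-k}(\Lambda_0)$ is bounded by a quantity tending to $0$ with $m$.
\item Since $(j,k)$ ranges in a finite set, some pair recurs. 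Hence $f^{-j}(\Lambda_0)\cap f^{-k}(\Lambda_0)\neq\emptyset$, and the two laminations coincide.
\item If $j\neq k$, then $\Lambda_0$ is periodic with some period $\ell$. This has to be arranged, e.g.\ by running the argument with both $N$ and $N+1$. Then $f$-minimality gives $\Lambda=\bigcup_{0\le i<\ell}f^i(\Lambda_0)$.
\end{enumerate}
Stable contraction can only identify two laminations that are fixed before iterating, with the configuration placed at time $-m$. Your forward-only argument instead compares moving targets $f^n(\Lambda_0)$.
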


\begin{obs}\label{r-njifinite}
A similar statement holds if one replaces the transversality by the strong non-joint integrability, see \S\ref{s-njifinite}
\end{obs}

\subsection{Criteria for transversality}
The state~\cite[Proposition 1.4]{ACW}:
\begin{prop}\label{p.transverse0}
If the foliation $\cW^u$ is $f$-minimal, then it is transverse.
\end{prop}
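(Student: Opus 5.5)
The plan is to argue by contradiction: if the $f$-minimal foliation $\cW^u$ were not transverse, then some form of joint integrability of $\cW^u$ and $\cW^s$ would propagate to the whole manifold and contradict partial hyperbolicity on a compact $M$.

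\emph{Step 1: a non-transverse disk at a fixed scale.} Assume that for some small $\eps>0$ the lamination $\Lambda=M$ is not $\eps$-transverse. Then for every $R$ there is an unstable disk $D_R$ of internal radius $R$ with the following property: for all $x\neq y$ in $D_R$ on a common local stable leaf, the holonomy $\Pi^s_{x,y}$ sends $\cW^u_\eps(x)$ into the closure of only one side of $\cW^u_{loc}(y)$ inside the center-unstable disk $D_y$. Here we use that $\dim E^c=1$, so $D_y\setminus\cW^u_{loc}(y)$ has exactly two components.

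\emph{Step 2: pass to a limit leaf.} Take the limit of $D_R$ as $R\to\infty$, recentred at suitable points. This produces a complete leaf $L=\cW^u(z)$, and the one-sidedness condition passes to it because it is a closed condition. We then use iterates. Applying $f^{-n}$ shrinks unstable plaques while stable distances grow, so we instead apply $f^{n}$, which expands $\cW^u_\eps$ to large plaques while contracting stable distances. In the limit this shows that the condition also holds at every scale, with $\eps$ replaced by any $L$-size plaque.

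\emph{Step 3: use $f$-minimality to spread the property.} Since $\cW^u$ is $f$-minimal, Lemma~\ref{p.fminimalit} shows that finitely many iterates of any large plaque of $L$ are $\delta$-dense. The one-sided property is invariant under $f$ and closed, so it holds for all pairs of nearby points on common stable leaves throughout $M$.

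\emph{Step 4: derive the contradiction.} Pairs $(x,y)$ on which the holonomy pushes strictly to one side, say ``above'', define an order relation transverse to $\cW^u$ inside center-unstable leaves. Combined with the local product structure, this gives one of two outcomes:
\begin{enumerate}
\item $\cW^u$ and $\cW^s$ are jointly integrable on $M$, producing an invariant $\cW^{su}$ foliation of codimension one;
\item there is a coherent co-orientation along which stable holonomy moves unstable leaves monotonically.
\end{enumerate}
In case (i), the center direction gives a circle-like quotient dynamics, and the minimal sets of $\cW^u$ inside $\cW^{su}$-leaves cannot be all of $M$ unless the action is minimal. This would need to be ruled out using the fact that stable holonomy preserves $\cW^u$, which should mean $\cW^u$ is saturated by its $su$-leaves and hence its closures are proper. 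In case (ii), the monotone displacement would accumulate: following a closed $su$-path, we expect to return strictly displaced, contradicting compactness or the density of $\cW^u$.

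The main obstacle is Step 4, namely turning the absence of transversality into a genuine global structure. In particular, one must rule out the purely one-sided tangency configuration using only $f$-minimality. I would try first to show, by a recurrence argument via Corollary~\ref{c.minimaltransitive}, that tangency on one side forces tangency on both sides, and hence local joint integrability. That would land us in case (i), where the contradiction is cleaner.
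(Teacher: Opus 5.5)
There is a genuine gap, and it sits exactly where you expect the contradiction to be cleanest: case (i) of Step~4 cannot be ruled out, because joint integrability of $\cW^s$ and $\cW^u$ is compatible with $f$-minimality.

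Take the paper's own example in \S\ref{s.unstablelam}: $f(x,t)=(g(x),t+\alpha)$ on $\TT^2\times S^1$, with $g$ Anosov and $\alpha\notin\QQ$. Here $\cW^u$ is $f$-minimal. But the $su$-leaves are the tori $\TT^2\times\{t\}$, so every holonomy $\Pi^s_{x,y}$ maps $\cW^u_\eps(x)$ into $\cW^u_{loc}(y)$. Hence $\cW^u$ is $\eps$-transverse for no $\eps$. A linear Anosov automorphism of $\TT^3$ with real eigenvalues $\lambda_1<1<\lambda_2<\lambda_3$ is even worse: $\cW^u$ is minimal, yet $E^s\oplus E^u$ is a constant, hence integrable, plane field.

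So your heuristic that stable holonomy preserving $\cW^u$ makes leaf closures proper is false. No argument built only from $f$-minimality, closedness and recurrence can prove the statement as literally written with Definition~\ref{def.transverse}. The missing ingredient is non-joint integrability. The paper does not prove this proposition itself; it quotes \cite[Proposition 1.4]{ACW}. It invokes it only in the proof of Corollary~\ref{C.QAorTrans}, for generic $f$, where Theorem~\ref{T.RobustSH} makes $\cW^u$ strongly non-jointly integrable in the sense of Definition~\ref{def.NJI}. Under that hypothesis the conclusion follows at once from Proposition~\ref{p.transverse}, since for $\Lambda=M$ every stable leaf meets $\Lambda$ in infinitely many points.

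Adding that hypothesis disposes of case (i), but your case (ii) is then the real content, and for it you only state an expectation (``we expect to return strictly displaced''). Likewise, no mechanism is given for upgrading one-sided tangency to two-sided tangency. The paper handles the one-sided configuration locally rather than through a global $su$-structure, in four steps.
\begin{enumerate}
\item Assume $\Lambda$ is not $\eps$-$f$-transverse. Take $z,z'\in\cW^s_{loc}(z_0)$. The three projected plaques $\cW^u_\eps(z_0)$, $\Pi^s_{z,z_0}\cW^u_\eps(z)$ and $\Pi^s_{z',z_0}\cW^u_\eps(z')$ are distinct (by strong non-joint integrability) and do not cross. So one of them lies between the other two.
\item This nesting traps every nearby plaque of $\Lambda$, forcing it to meet $\cW^s_{loc}(z_0)$ (Lemma~\ref{lem-openset}, via \cite[Lemma 7.1]{CPoS}).
\item Balls are then grown by iteration inside the open dense set $\Omega_k\subset\cW^u_{loc}(x_k)$ of points whose local stable manifold misses $\cW^u_{loc}(y_k)$, until they reach a definite size. $f$-minimality pushes them into that open set, producing a point of $\Omega_\ell$ whose stable manifold meets $\cW^u_{loc}(y_\ell)$, which is a contradiction.
\item A final lemma upgrades $f$-transversality to transversality.
\end{enumerate}

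Your Steps~2--3 are also not justified as written, for three reasons.
\begin{enumerate}
\item Failure of $\eps$-transversality only controls pairs lying in a single disk.
\item A pair of $f(D)$ on a common local stable manifold need not come from such a pair of $D$, since backward iteration expands stable distances.
\item Density of $\bigcup_n f^n(L)$ says nothing about pairs $x\in f^n(\overline L)$, $y\in f^m(\overline L)$ with $n\neq m$.
\end{enumerate}
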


The next results will be proved in \S\ref{s.transverse}.  

\begin{teo}\label{t.transverse-periodic}
If an $f$-minimal unstable lamination is strongly non-jointly integrable and has no periodic points, then it is transverse. 
\end{teo}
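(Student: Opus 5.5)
We argue by contradiction. Let $\Lambda$ be an $f$-minimal unstable lamination which is strongly non-jointly integrable and has no periodic points, and suppose it is not transverse. Then there is a small $\eps>0$ such that $\Lambda$ is not $\eps$-transverse. So for every $R_n\to\infty$ there is an unstable disk $D_n\subset\Lambda$ of internal radius $R_n$ with the following property. For every pair $x\neq y$ in $D_n$ on a common local stable manifold, the holonomy $\Pi^s_{x,y}(\cW^u_\eps(x))$ stays in the closure of a single component of $D_y\setminus\cW^u_{loc}(y)$. Since $\dim E^c=1$, this defines a \emph{side}: locally near $y$, the plaque $\cW^u_{loc}(y)$ separates a center-unstable disk $D_y$ into an upper and a lower half. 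The holonomy image of $\cW^u_\eps(x)$ lies on one side, possibly touching $\cW^u_{loc}(y)$. Strong non-joint integrability forbids the image from being contained in $\cW^u_{loc}(y)$, so it must leave into one side strictly.

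The first step is to pass to a limit. Translate the $D_n$ by iterates and use compactness of $\Lambda$, together with Lemma~\ref{p.fminimalit} to recenter. We obtain a whole leaf $L=\cW^u(z)\subset\Lambda$, or rather a lamination, on which the non-transversality holds at every scale. Non-transversality with a fixed $\eps$ is a closed condition, since "not entering both components" passes to limits. The set $\Lambda'$ of points whose full unstable leaf is non-transverse in this sense is therefore compact and $\cW^u$-saturated. We will also show it is $f$-invariant after a rescaling of $\eps$: $f^{-1}$ contracts $\cW^u$, so $\eps$-plaques map into $\eps$-plaques, and the domination $E^s\prec E^c\prec E^u$ preserves the ordering of sides. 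By $f$-minimality, $\Lambda'=\Lambda$. Hence every pair of stably related points of $\Lambda$ has the one-sided property.

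The second step uses the one-sidedness to build an orientation. For nearby stably related $x,y\in\Lambda$, the holonomy image of $\cW^u_{loc}(x)$ near $y$ lies weakly above or weakly below $\cW^u_{loc}(y)$, and strictly at some points by strong non-joint integrability. This gives a relation "$\cW^u(x)$ is above $\cW^u(y)$" which is locally consistent and defines a local order on plaques along each stable manifold. The plan is to show that this configuration forces the existence of an extremal leaf. Inside each local stable manifold, take the "lowest" plaque of $\Lambda$ in the sense of the order, i.e. a boundary plaque, which has no plaques of $\Lambda$ below it nearby. The union of such boundary leaves should form an $f$-invariant closed sub-lamination on which the center direction is one-sided. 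Using the dynamics along $E^c$, whose iterates under $f^{\pm n}$ are dominated, we then produce an invariant compact set $K\subset\Lambda$ which meets each stable leaf in at most one point. Theorem~\ref{teo-surface} puts $K$ in a locally invariant $cu$-submanifold. On it, the lamination is a codimension-one lamination of a one-sided surface-like object, and the order structure yields a compact invariant center arc whose endpoints are fixed up to iterate. Its endpoints lie in $\Lambda$, so $\Lambda$ would contain a periodic point, which is the desired contradiction.

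The main obstacle is this second step: converting the one-sided holonomy into either a periodic point or a joint integrability. In particular, we must show that the ordering cannot "flip" along a leaf. If it flips, connectedness of leaves and continuity of holonomy should yield points where the image is contained in $\cW^u_{loc}(y)$, contradicting strong non-joint integrability. If it does not flip, we get the coherent orientation above. I would first try to prove the no-flip property by a connectedness argument on the set of $(x,y)$ with image above versus below. I would then obtain the periodic point by a Denjoy/Poincaré–Bendixson type argument on the one-dimensional center quotient restricted to the boundary leaves.
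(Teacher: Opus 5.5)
Your first step (non-crossing is a closed condition, and by $f$-minimality no pair of stably related points of $\Lambda$ has crossing holonomy) matches the paper's reduction, which uses $\eps$-$f$-transversality to absorb the scale bookkeeping you allude to. The genuine gap is your second step. All the content of the theorem lies there, and the mechanism you sketch does not work.

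First, the ``boundary leaves'' construction produces nothing. A closed, $f$-invariant, $\cW^u$-saturated subset of an $f$-minimal $\Lambda$ is $\Lambda$ itself, and $\Lambda$ in general has stable connections. After that, nothing in your plan produces the set $K$ without stable connections, nor the ``compact invariant center arc''.

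Second, and more seriously, the hypotheses allow $\Lambda$ to have \emph{no} stable connections at all. Then strong non-joint integrability holds vacuously and $\Lambda$ is trivially not transverse. So in this case the theorem says exactly that $\Lambda$ contains a periodic point, for an arbitrary, non-generic, $C^1$ diffeomorphism. Theorem~\ref{teo-surface} only places $\Lambda$ in a locally invariant $cu$-submanifold of dimension $d_u+1$, with no hyperbolicity along $E^c$; Theorem~\ref{teo-hyp} needs genericity. There is no order left to exploit, and no Denjoy or Poincar\'e--Bendixson argument yields a periodic orbit there.

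The missing idea is ergodic-theoretic: Proposition~\ref{prop-lyapunovexponents}. Take an ergodic u-Gibbs measure on $\Lambda$; it has full support and positive entropy.
\begin{itemize}
\item If its center exponent is negative, \cite{ABC} gives periodic points with uniform $cs$-plaques, and these must lie in $\Lambda$.
\item If the exponent is nonnegative, the entropy of $f^{-1}$ is carried by $\cW^s$, so $\cW^s_{loc}(z_0)\cap\Lambda$ is uncountable for a dense set of $z_0$.
\end{itemize}
This is exactly where ``no periodic points'' is used, and it is absent from your plan.

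In the remaining regime, a local stable leaf meets $\Lambda$ in at least three points. There non-transversality contradicts strong non-joint integrability directly, and no periodic point is involved (Proposition~\ref{p.transverse}). The argument runs as follows.
\begin{enumerate}
\item Since the three projected plaques through $\cW^s_{loc}(z_0)$ do not cross, one of them is sandwiched between the other two.
\item By \cite[Lemma 7.1]{CPoS}, every plaque of $\Lambda$ near a point of the middle one is pinched through $\cW^s_{loc}(z_0)$ (Lemma~\ref{lem-openset}).
\item Take stably related $x_k,y_k\in\Lambda$ and the open dense sets $\Omega_k\subset\cW^u_{loc}(x_k)$ of points whose stable plaque misses $\cW^u_{loc}(y_k)$. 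A maximal ball in $\Omega_k$ grows exponentially under iteration until its size is comparable to $\eps$.
\item By minimality, an iterate of that ball then enters the pinched region. This produces a point of $\Omega_\ell$ whose stable plaque meets $\cW^u_{loc}(y_\ell)$, a contradiction.
\end{enumerate}
Your proposal never uses the cardinality of $\cW^s(z_0)\cap\Lambda$, which is what separates the two regimes. The ``no-flip'' question you identify as the main obstacle is not where the difficulty lies.
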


\begin{cor}\label{C.QAorTrans}

There is a dense $G_\delta$ subset $\cG\subset \PH^1_{c=1}(M)$ such that, for $f \in \cG$, any $f$-minimal unstable lamination is transverse or is a proper uniformly hyperbolic transitive attractor whose center direction is attracting.

For any $\varepsilon>0$, there is a dense open subset $\cO_{\mathrm{vol}} \subset \PH^1_{c=1, \mathrm{vol}}(M)$ such that, for $f \in \cO_{\mathrm{vol}}$, the unstable foliation $\cW^u$ is $\varepsilon$-transverse. 
\end{cor}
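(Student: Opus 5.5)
The plan is to combine Theorem~\ref{T.RobustSH}, Theorem~\ref{t.transverse-periodic}, Proposition~\ref{prop-QAgeneric} and Theorem~\ref{teo-hyp}.

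\emph{Dissipative case.} Let $\cG$ be the intersection of the dense $G_\delta$ sets given by Theorem~\ref{T.RobustSH}, Proposition~\ref{prop-QAgeneric}, Theorem~\ref{t.palis} and Theorem~\ref{teo-hyp}. Fix $f\in\cG$ and an $f$-minimal unstable lamination $\Lambda$. By Theorem~\ref{T.RobustSH}, $\Lambda$ is strongly non-jointly integrable. If $\Lambda$ has no periodic point, Theorem~\ref{t.transverse-periodic} shows that $\Lambda$ is transverse, and we are done.

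Otherwise $\Lambda$ contains a periodic point $p$, and Proposition~\ref{prop-QAgeneric} makes $\Lambda$ a quasi-attractor. The goal is then to show that $\Lambda$ is a uniformly hyperbolic attractor with contracting center, unless it is transverse. The key step is to exploit the periodic point together with strong non-joint integrability.
\begin{itemize}
\item If $\Lambda$ has no stable connection, Theorem~\ref{teo-hyp} applies directly. The set $\Lambda$ is hyperbolic. It cannot have expanding center, since it is an unstable lamination and not a finite set of saddles; hence the center is contracting and $\Lambda$ is an attractor. It is transitive by Corollary~\ref{c.minimaltransitive} and proper by dissipativity, as $M$ is not normally contracted.
\item If $\Lambda$ has stable connections, I would show transversality using the strong non-joint integrability. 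The argument of Theorem~\ref{t.transverse-periodic} should go through everywhere except near periodic orbits whose center is contracting. So I would first use Theorem~\ref{t.palis} and Corollary~\ref{c.palis}. If $\Lambda$ is not hyperbolic, it contains saddles with expanding center, and the blender of Corollary~\ref{c.palis} gives unstable plaques crossing stable leaves in both sides. This yields $\eps$-transversality in the sense of Definition~\ref{def.transverse}, propagated over disks of radius $R$ by Lemma~\ref{p.fminimalit}. If $\Lambda$ is hyperbolic with contracting center, it is an attractor; if the center is expanding, $\Lambda$ would have to be a finite set, which is absurd.
\end{itemize}
I expect this periodic-point case, especially the use of the blender to get crossing on both sides, to be the main obstacle.

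\emph{Conservative case.} Fix $\varepsilon>0$. Take $f$ in the conservative generic set from Theorem~\ref{T.RobustSH} intersected with the one from the remark after Proposition~\ref{prop-QAgeneric}. Proper invariant laminations then contain no periodic points, and there are no attractors. Each $f$-minimal unstable lamination is therefore transverse by Theorem~\ref{t.transverse-periodic}.

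I then need $\Lambda=M$. If $\Lambda\neq M$ were a proper $f$-minimal lamination, transversality and Proposition~\ref{prop-njifinite} would make it a finite union of minimal laminations. A volume argument should then rule it out: I would exclude an accessible-class type obstruction, or use transversality plus $\omega$-limits of a positive-measure set and Poincar\'e recurrence. This is a second delicate point. Granting it, $\cW^u$ is $f$-minimal and hence transverse, so in particular it is $\varepsilon$-transverse for the given $\varepsilon$.

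Since $\varepsilon$-transversality is $C^1$-open, the set of conservative $f$ for which $\cW^u$ is $\varepsilon$-transverse contains an open neighborhood of this dense set. I take $\cO_{\mathrm{vol}}$ to be that open dense set.
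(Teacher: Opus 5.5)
Your opening reduction matches the paper. Strong non-joint integrability comes from Theorem~\ref{T.RobustSH}, transversality from Theorem~\ref{t.transverse-periodic} when $\Lambda$ has no periodic point, and otherwise $\Lambda$ is a quasi-attractor by Proposition~\ref{prop-QAgeneric}. The case analysis after that has three genuine gaps.

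First, in the case with stable connections you discard ``hyperbolic with expanding center'' as ``a finite set, which is absurd''. That finiteness is the conclusion of Theorem~\ref{teo-hyp}, which applies only to sets \emph{without} stable connections. With stable connections the case occurs, e.g.\ $\Lambda=M$ for an Anosov diffeomorphism of $\TT^3$ whose center is expanded, and it has to be proved transverse.

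Second, you never isolate $\Lambda=M$. For an Anosov $f$ with contracting center and $f$-minimal $\cW^u$, your argument stops at ``$\Lambda$ is an attractor''. That is not an admissible conclusion, because the statement requires the attractor to be proper. The paper uses Proposition~\ref{p.transverse0} here.

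Third, the blender step is not substantiated. Unstable plaques meeting stable leaves of a hyperbolic set $K$ is not the crossing of Definition~\ref{def.transverse}. Also, Lemma~\ref{p.fminimalit} only makes $D\cup\dots\cup f^n(D)$ dense, so it does not place the two points $x,y$ in the same disc of radius $R$.

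You already have the right ingredient, a periodic point with expanded center (from Theorem~\ref{t.palis}, or trivially in the expanding hyperbolic case). What is missing is how to use it; this handles both remaining cases at once. For a proper quasi-attractor, use Remark~\ref{r-njifinite} and pass to an iterate so that $\Lambda$ is a \emph{minimal} unstable lamination. Since $\Lambda$ is a quasi-attractor containing such a $q$, it contains a whole center-unstable disc $D^{cu}_q$, foliated by plaques $\cW^u_{loc}(y)\subset\Lambda$. Take $z\in\cW^s_{loc}(q)\cap\Lambda$. Strong non-joint integrability says the $\cW^s$-holonomy image of $\cW^u_\eps(z)$ in $D^{cu}_q$ is not contained in $\cW^u(q)$, so by connectedness it crosses both sides of some plaque $\cW^u_{loc}(y)$. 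Every leaf of a minimal lamination is dense, so every disc of large radius $R$ contains subdiscs close to $\cW^u_\eps(z)$ and to $\cW^u_{loc}(y)$. This reproduces the crossing, which is $\eps$-transversality.

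In the conservative part, showing that every $f$-minimal lamination equals $M$ by an unspecified volume argument is unjustified and unnecessary. It is essentially the minimality that the paper only obtains later, through Theorem~\ref{teo-ACW} and the SH property. The paper simply reruns the same dichotomy in $\PH^1_{c=1,\mathrm{vol}}(M)$, where proper attractors cannot exist, so every $f$-minimal unstable lamination is transverse. Passing to the whole foliation $\cW^u$ then needs only two facts: every leaf accumulates on one of the finitely many minimal laminations, and the crossing is an open condition. Openness of $\eps$-transversality then gives $\cO_{\mathrm{vol}}$.
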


\subsection{Transverse structure} 

An unstable lamination $\Lambda$ is \emph{trans\-versally Cantor} if
its intersection with any disc transverse to $E^u$ is totally disconnected. 

\begin{prop}\label{prop-lamtransvcantor}
If $\Lambda$ is an $f$-minimal unstable lamination which is strongly non-jointly integrable and not transversally Cantor, then there is a non-empty open set $U \subset M$ such that $\omega(x) \subset \Lambda$ for every $x \in U$.
\end{prop}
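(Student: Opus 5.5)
Since $\Lambda$ is not transversally Cantor, there is a disc $D$ transverse to $E^u$ (which we may take to be a small center-stable disc of dimension $d_{cs}$) such that $\Lambda\cap D$ has a non-trivial connected component $C$. Using the local product structure, we saturate $C$ by local unstable plaques. The set $\bigcup_{z\in C}\cW^u_{loc}(z)\subset\Lambda$ is then a "thick" piece of $\Lambda$: it is a continuum of unstable plaques parametrized by a non-degenerate continuum transverse to $E^u$. The plan is to show that $\Lambda$ contains a center-unstable disc, i.e. a $d_{cu}$-dimensional disc $\Delta$ tangent to the cone field around $E^c\oplus E^u$. Once we have such a disc, the stable saturation $V=\bigcup_{x\in \operatorname{int}\Delta}\cW^s_{loc}(x)$ contains a non-empty open set by the local product structure. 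For every $y\in V$ we have $d(f^n(y),f^n(x))\to0$ for some $x\in\Lambda$, and since $\Lambda$ is compact and invariant this gives $\omega(y)\subset\Lambda$. So the whole proof reduces to producing $\Delta$.

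To produce $\Delta$, we use the strong non-joint integrability to show that $C$ cannot lie inside the strong stable direction. Assume first that $C$ meets a single local stable leaf in more than one point, and pick $x\ne y$ in $C\cap\cW^s_{loc}(x)$. Definition \ref{def.NJI} gives points $z\in\cW^u_{loc}(x)$ arbitrarily close to $x$ with $\Pi^s_{x,y}(z)\notin\cW^u_{loc}(y)$. This already shows that $\Lambda$ has a nontrivial center spread near $y$: the plaque through $z$ lies in $\Lambda$, and its stable projection sits on a center-unstable disc through $y$ but off $\cW^u_{loc}(y)$. Now consider the case where instead $C$ projects injectively along $\cW^s$ onto a center-unstable disc $D^{cu}_y$. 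Then its projection is a non-trivial continuum in $D^{cu}_y$ transverse to the unstable plaques. Because the quotient of $D^{cu}_y$ by the unstable plaques is one-dimensional ($\dim E^c=1$), this projection covers a non-trivial center interval $I$. In both cases we obtain a center arc $I$ of the quotient such that every unstable plaque over $I$ belongs to the stable projection $\Pi^s(\Lambda)$. The union of these plaques is a center-unstable disc $\Delta'$ contained in $\Pi^s_{loc}(\Lambda)$, not in $\Lambda$ itself. That weaker conclusion is still sufficient: the stable saturation of $\Delta'$ is open, and every point in it is stably asymptotic to $\Lambda$. So it suffices to show that a full interval of plaques is hit.

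The main obstacle is the last step: showing that the continuum meets every plaque over an interval in the quotient, rather than a Cantor-like subset of plaques. The argument I intend to use is that a non-degenerate connected set in $D^{cu}_y$ which is saturated by unstable plaques projects onto a connected subset of the one-dimensional quotient, and therefore onto an interval. In the first case, I would use minimality to reach a connected configuration: iterating by $f^n$ spreads $C$, and Lemma \ref{p.fminimalit} lets us return near $x$. If connectedness of the projection fails, I would fall back on the strong non-joint integrability applied at every pair of points of $C$ sharing a stable leaf, which forces the center spread to be nowhere locally constant, and then argue that the projection is connected directly.
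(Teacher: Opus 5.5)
\textbf{There is a genuine gap.} Your final reduction is correct and is how the paper ends. If the stable projections of plaques of $\Lambda$ cover a non-empty open subset $R$ of a center-unstable disc, then $\bigcup_{w\in R}\cW^s_{loc}(w)$ has non-empty interior and all its points have $\omega$-limit in $\Lambda$. But producing $R$ is the whole content of the proposition, and your mechanism for it does not work.

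There is no one-dimensional ``quotient of $D^{cu}_y$ by the unstable plaques''. The sets $\Gamma_t:=\Pi^s(\cW^u_{loc}(t))$, $t\in C$, are topological $d_u$-discs in $D^{cu}_y$, and they are not leaves of any foliation. Strong non-joint integrability says that two of them sharing a point never coincide locally, so distinct $\Gamma_t$ can cross. Hence ``the plaques over an interval $I$'' and ``their union is a center-unstable disc $\Delta'$'' have no meaning.

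Your case split is also misplaced. In Case 2 you never use strong non-joint integrability, yet that hypothesis is necessary. Take $f=A\times\mathrm{id}$ on $\TT^2\times S^1$, $\Lambda=\TT^2\times\{0\}$, and a disc $D$ tangent to $e_s+e_u$ and $e_c$ (with $e_\bullet$ spanning $E^\bullet$). Then $C=D\cap\Lambda$ is an arc projecting injectively along $\cW^s$, but its projection lies in the single plaque $\cW^u_{loc}(x_0)$. No center interval is covered, and no open set has $\omega$-limits in $\Lambda$. In Case 1, a single pair $x\neq y$ only yields one plaque off $\cW^u_{loc}(y)$ and says nothing about what lies in between. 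Your fallbacks (iterating $C$, ``center spread nowhere locally constant'') do not address the crossing problem.

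The missing idea is a connectedness argument on the parameter continuum $C$, applied to the family $t\mapsto\Gamma_t$ in a fixed center-unstable disc $\cD^{cu}\supset\cW^u_{loc}(x_0)$. It needs an invariant that detects which ``side'' of a plaque a point is on, robustly under crossings.

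Strong non-joint integrability is used only once: to find $c\in C$ near $x_0$ such that $\Gamma_c$ misses a non-empty open piece of $\Gamma_{x_0}$. Otherwise $\Gamma_{x_0}\subset\Gamma_t$ near $x_0$ for every $t\in C$. For $t\neq x_0$ the stable holonomy would then send a neighbourhood of $x_0$ in $\cW^u_{loc}(x_0)$ into $\cW^u_{loc}(t)$, contradicting Definition~\ref{def.NJI}.

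The paper then uses the mod $2$ intersection number $\cI(\Gamma_t,[w,q])$, where $[w,q]$ is an arc from $w$ to a fixed endpoint $q$. It is defined when $w\notin\Gamma_t$ and varies continuously with $(w,t)$.
\begin{itemize}
\item Let $R$ be the set of $w$ with $\cI(\Gamma_{x_0},[w,q])\neq\cI(\Gamma_c,[w,q])$. It is open.
\item $R$ is non-empty: crossing $\Gamma_{x_0}$ at a point of the piece missed by $\Gamma_c$ changes one number but not the other.
\item Suppose some $w\in R$ lay outside $\bigcup_{t\in C}\Gamma_t$. Then $t\mapsto\cI(\Gamma_t,[w,q])$ would be a continuous $\{0,1\}$-valued function on the connected set $C$ taking both values, which is impossible. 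Hence $R\subset\bigcup_{t\in C}\Gamma_t$.
\end{itemize}

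Your ``interval in the quotient'' intuition is the special case where the $\Gamma_t$ are graphs over a common unstable coordinate; there the argument is just the intermediate value theorem. The $\Gamma_t$ come from merely H\"older stable holonomies, so no such height function is available, and the intersection number replaces it.
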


\begin{proof}
Let $D$ be a disk transverse to $E^u$ such that $D \cap \Lambda$ contains some non-trivial connected set $\cC$ and let us fix some $x_0 \in \cC$. We take a center stable disk $\cD^{cu}$ containing the local strong unstable manifold $\cW^{u}_{loc}(x_0)$ and transverse to $E^s$.
For each $t\in \cC$ close to $x_0$ we project the local unstable manifold $\cW^{u}_{loc}(t)$ to $\cD^{cu}$
by $\cW^s$-holonomy and get a continuum $\Gamma_t\subset \cD^{cu}$.

Let us consider some $C^1$ arc $I\subset \cD^{cu}$, connecting two points $p,q$, transverse to $\cW^{u}_{loc}(x_0)$
and which intersects $\Gamma_{x_0}=\cW^{u}_{loc}(x_0)$ only at $x_0$. It is denoted by $I=[p,q]$. For each $t\in \cC$, we denote by $\cI(\Gamma_{t},[p,q])$ the intersection number mod 2 between
$\Gamma_t$ and $[p,q]$. By construction $\cI(\Gamma_{x_0},[p,q])=1$, hence by reducing the connected set $\cC$,
one gets $\cI(\Gamma_t, [p,q])=1$ for all $t \in \cC$.

Let  $B \subset \cD^{cu}$ be some small neighborhood of $x_0$ and for each $y\in B$, let $[p,y]$ and $[y,q]$
be some smooth arcs joining the points $p,y,q$, whose concatenation is close and homotopic to $[p,q]$ (with fixed endpoints). The stability properties of the intersection number give:
\begin{af}
For every $y \in B$ such that $y \notin \Gamma_t$, the numbers $\cI(\Gamma_t, [p,y])$ and $\cI(\Gamma_t, [y,q])$ are well defined, their sum equals $1$. They vary continuously with the pairs $(y,t) \in B\times \cC$ such that $y \notin \Gamma_t$.
\end{af}
\begin{proof}
For every small smooth approximation of $\Gamma_t$ (not necessarily embedded) the intersection number mod 2 is well-defined. It is invariant under homotopy so it is independent from the approximation and from the arc joining $p$ and $q$. The continuity also follows from the invariance under homotopy. 
\end{proof}

From the strong non-joint integrability there is $c\in \cC$ close to $x_0$ and
a non-empty open set $V\subset \Gamma_{x_0}\cap B$ which is disjoint from $\Gamma_{c}$. Consider then an arc $J$ which is obtained by deforming $I$ inside $B$ and such that it intersects $\Gamma_{x_0}$ transversally and uniquely in some point $z \in V$. Without loss of generality (up to reversing the roles of $p,q$), we can assume that $[z,q]$ is disjoint from $\Gamma_c$.  It follows that close to $z$ there is some point $z' \in J$ such that $\cI(\Gamma_{x_0},[z',q]) + \cI(\Gamma_c, [z',q]) = 1$. Hence the set defined by
$$R:= \{w \in B \ : \ \cI(\Gamma_{x_0},[w,q]) + \cI(\Gamma_c, [w,q]) = 1\}$$ is non-empty and open inside $\cD^{cu}$.

We claim that $R \subset \bigcup_{t \in \cC} \Gamma_t$. Indeed if a point $w_0 \in R$ does not belong to the union $\bigcup_{t \in \cC} \Gamma_t$, then $t\mapsto  \cI(\Gamma_t, [w_0,q])$ is well defined, continuous on
$\cC$, takes the two values $0$ and $1$, contradicting the connectedness of $\cC$.
It follows that $\bigcup_{w \in R} \cW^{s}_{loc}(w)$ has non-empty interior in $M$. The $\omega$-limit set of each of its point is contained in $\Lambda$.
\end{proof}

\section{The SH property}\label{s-SH}

\subsection{Definition and first properties} 

Extending the definition of \cite{PS-SH}, we say that an $f$-invariant unstable lamination $\Lambda$ has the \emph{SH property} if there exists $N>0$ and $\eps>0$ such that for every unstable disk $D^u \subset \Lambda$ with inner radius larger than $\eps$ there exists $D' \subset D^u$ sub-disk such that:

\begin{itemize}
\item[--] $f^N(D')$ contains a disk of inner radius larger than $\eps$;
\item[--] every $x\in D'$ and $v \in E^c(x) \setminus \{0\}$ satisfy $\|Df^N v \| > 2 \|v\|$.
\end{itemize}
A stable lamination has the SH property if it has the SH property for $f^{-1}$.
\begin{obs}\label{r.SH-robust}
If $g$ is $C^1$-close to $f$, then any $g$-invariant unstable lamination $\Lambda_g$ contained in a neighborhood of $\Lambda$ also has the SH property.
\end{obs}

The robustness of the minimality of the strong foliations $\cW^s,\cW^u$ can be obtained from the following result, which is a direct consequence of \cite{PS-SH} (see also \cite{CroPot,ACW}).

\begin{prop}\label{prop-robustminimal}
Let $f \in \mathrm{PH}^1_{c=1}(M)$ such that both foliations $\cW^s$, $\cW^u$ are minimal and have the SH property. Then $\cW^s$ and $\cW^u$ are robustly minimal. In particular, $f$ is robustly topologically mixing.
\end{prop}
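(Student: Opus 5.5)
The argument goes through the Pujals--Sambarino mechanism of \cite{PS-SH}. The plan is to show that the SH property for $\cW^u$ persists as a uniform statement for all $C^1$-perturbations $g$ of $f$: unstable disks of $g$ of a definite size always contain subdisks on which the center is expanded. Combining this with minimality of $\cW^s_f$, which is open in a suitable quantitative sense, then gives minimality of $\cW^u_g$. The symmetric argument with $f^{-1}$ gives robust minimality of $\cW^s$.

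\emph{Uniform density and robust SH.} Since $\cW^s_f$ is minimal and $M$ is compact, there is $R>0$ such that every stable disk of radius $R$ is $\delta$-dense in $M$, where $\delta$ is the scale of the local product structure. This is a uniform, finite-scale condition, so it persists: every stable disk of $g$ of radius $2R$ is $\delta$-dense for $g$ $C^1$-close to $f$. Next, by Remark~\ref{r.SH-robust} applied to $\Lambda=M$, the whole foliation $\cW^u_g$ has the SH property with the same constants $N,\eps$, possibly slightly weakened (expansion factor $>3/2$ instead of $2$).

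\emph{Minimality of $\cW^u_g$.} Let $\Lambda_g$ be a minimal $\cW^u_g$-saturated compact set. The aim is to show that $\Lambda_g$ contains a full open set; then $\Lambda_g=M$, since the complement of a minimal unstable lamination is open, the lamination is closed, and $M$ is connected (alternatively, one may argue that the interior of $\Lambda_g$ is $\cW^u$-saturated). The strategy is as follows.
\begin{enumerate}
\item Iterate the SH property: starting from any disk $D\subset\Lambda_g$ of radius $\eps$, find subdisks $D_k$ with $g^{kN}(D_k)\supset$ a disk of radius $\eps$, where the center derivative along $D_k$ is expanded by $(3/2)^k$ under $g^{kN}$.
\item Consider center-unstable \emph{wedges}: a point $x\in D_k$ and a short center curve $\gamma$ at $g^{kN}(x)$. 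Pulling $\gamma$ back, the uniform center expansion on the orbit, together with domination, yields a center-unstable disk of uniform size (a ``center-unstable unstable manifold'') along which points approach the orbit of $x$ in the past. This is the standard Pujals--Sambarino argument: SH gives uniform-size unstable manifolds $W^{cu}$ of dimension $d_u+1$ for points of $\Lambda_g$ with positive density of expanding times.
\item By the $\delta$-density of stable leaves, some stable leaf of any point $y\in M$ crosses this uniform center-unstable disk, which lies near $\Lambda_g$ up to stable holonomy. Since $\Lambda_g$ is $\cW^u$-saturated and compact, forward iterates of the cu-disk accumulate in $\Lambda_g$. Taking backward iterates and using that $\Lambda_g$ is $g$-invariant then shows that $\Lambda_g$ meets every $\delta$-ball in a way that forces the union of its $cu$-plaques to cover open sets. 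This yields $\Lambda_g=M$.
\end{enumerate}

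The main obstacle is the second step: producing center-unstable disks of uniform size from the SH property, robustly in $g$, with only $C^1$ regularity and no domination beyond partial hyperbolicity. For this I would rely directly on the corresponding lemma in \cite{PS-SH}, adapted to laminations as in \cite{CroPot}. Finally, topological mixing follows because the minimality of $\cW^u_g$ implies that any open set $U$ contains an unstable disk whose forward iterates become large; the uniform density from the first step then makes $g^n(U)$ meet any $V$ for all large $n$.
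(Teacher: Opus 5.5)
You have the roles of the two foliations crossed. The two facts you set up first, that $\cW^s_g$-disks of radius $R$ are $\delta$-dense and that $\cW^u_g$ has the SH property, are exactly the ingredients of the Pujals--Sambarino argument for minimality of $\cW^s_g$, and they give it directly. Fix $z\in M$ and an open set $V$, and take a small center-unstable disk in $V$. The SH property (your step~(ii), applied to forward images of this disk rather than to a lamination) shows that some $g^n(V)$ contains a center-unstable disk of uniform size. The $\delta$-dense disk $\cW^s_{g,R}(g^n(z))$ crosses it by the local product structure, so $\cW^s_g(z)\cap V\neq\emptyset$. Robust minimality of $\cW^u$ is the same argument for $g^{-1}$, and it uses the minimality of $\cW^u_f$ together with the SH property of $\cW^s$. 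This is why the proposition assumes both foliations minimal and both with SH, and it is the ``direct consequence of \cite{PS-SH}'' the paper invokes. Your argument for $\cW^u_g$ uses neither of these two hypotheses, which already signals a problem.

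In fact step~3 cannot be repaired. Its claim is that a minimal $\cW^u_g$-saturated compact set has non-empty interior as soon as $\cW^u$ has SH and stable leaves are dense, and this is false. Take $f=A\times B$ on $\TT^4=\TT^2\times\TT^2$, with $A,B$ hyperbolic linear automorphisms of $\TT^2$ whose expanding eigenvalues satisfy $1<\mu<\nu$. Set $E^s=E^s_A\oplus E^s_B$, $E^c=E^u_A$, $E^u=E^u_B$. Then $f\in\PH^1_{c=1}(\TT^4)$ and $\cW^s$ is minimal. Since $E^c$ is uniformly expanded, every unstable lamination has SH. Yet for any $a\in\TT^2$, the set $\{a\}\times\TT^2$ is a minimal unstable lamination with empty interior.

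Concretely, (iii) breaks for three reasons. The center-unstable disks from (ii) are not contained in $\Lambda_g$. Their forward iterates need not accumulate on $\Lambda_g$, since the center is expanded away from it. And a $\cW^u$-saturated set has no center-unstable plaques with which to fill open sets. Proving that an unstable lamination with SH contains a $cu$-disk is the content of Theorem~\ref{teo-ACW}, which needs $C^2$ regularity and $\eps$-transversality, and it is not needed here. Even granting an interior point, closedness of $\Lambda_g$ and connectedness of $M$ would not give $\Lambda_g=M$.

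Finally, for mixing you need density of large \emph{unstable} disks of $g$. That comes from compactness and the robust minimality of $\cW^u_g$ (Corollary~\ref{c.minimaltransitive}), not from the stable-disk density of your first step.
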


\subsection{A criterion for minimality}
One way to obtain the robust transitivity is to prove that all minimal sets are transverse and have the SH property. Both properties are robust and, after perturbation, they allow to improve $\eps$-density of the orbits to density using the fact that every small disk tangent to $E^{c} \oplus E^u$ will reach size larger than $\eps$ (compare to \cite[Theorem 1.2]{HT} or \cite[Proposition 5.14]{CroPot}). However, we will instead use the following stronger result from \cite{ACW} to obtain minimality of the strong foliations:

\begin{teo}
\label{teo-ACW} 
Let $f \in \PH^2_{c=1}(M)$ be a $C^2$ diffeomorphism and let $\eps>0$ be smaller than $r_0$ (cf. \S\ref{ss.localproductstructure}). If $\Lambda$ is an $\eps$-transverse $f$-invariant unstable lamination with the SH-property, then it contains a disk $D$ tangent to $E^c \oplus E^u$. If $f$ is transitive, then $\Lambda=M$. 
\end{teo}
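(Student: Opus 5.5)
The plan is to produce the disk $D$ as a limit of unstable plaques of $\Lambda$. These plaques accumulate on one another in the center direction at a controlled rate. Transversality supplies the accumulation, SH keeps it at a uniform center scale, and the $C^2$ hypothesis controls distortion.

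\emph{Step 1: crossing pairs.} By $\eps$-transversality (Definition~\ref{def.transverse}), every unstable disk of $\Lambda$ of inner radius $R$ contains points $x\neq y$ in a common local stable leaf. For such a pair, the stable holonomy image of $\cW^u_\eps(x)$ in a center-unstable disk $D_y$ through $y$ meets both components of $D_y\setminus \cW^u_{loc}(y)$. I would fix a one-dimensional center arc $J\subset D_y$ through $y$, transverse to $\cW^u_{loc}(y)$. The projection of $\Lambda\cap \cW^u_\eps(x)$, together with $\Lambda\cap\cW^u_{loc}(y)$, then produces points of $\Lambda$ on the stable saturation of $J$ on both sides of $y$. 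By SH, inside any such disk there is a sub-disk $D'$ on which $Df^N|_{E^c}$ expands by a factor $2$, and whose image $f^N(D')$ again has inner radius $\eps$. Choosing $R$ large relative to $\eps$ and $N$, I can place the crossing pair inside the region controlled by SH.

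\emph{Step 2: iterate and keep the center scale.} Apply $f^{N}$ repeatedly. The stable distance between $f^{kN}x$ and $f^{kN}y$ shrinks exponentially, so the two unstable plaques become nearly coincident along $\cW^s$. At the same time, the center displacement created by the crossing is expanded at rate at least $2$ per block of $N$ iterates, by SH. The $C^2$ hypothesis is used here: it gives bounded distortion of $\log\|Df|_{E^c}\|$ along unstable plaques and of the center-unstable holonomies. With it, the ``both sides'' crossing configuration is carried to a configuration at the uniform center scale $\eps$. At each stage, I would re-apply transversality inside the new disk $f^{kN}(D')$ to produce new crossing pairs.

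In this way I expect to obtain, in a fixed transversal arc $J$, points of $\Lambda$ (after stable projection) that are $\delta_k$-dense in a center interval of length comparable to $\eps$, with $\delta_k\to0$. Passing to the limit with compactness of $\Lambda$ and continuity of the holonomies, $\Lambda$ projects onto a whole center interval. Unstable saturation then gives a $C^0$ disk tangent to $E^c\oplus E^u$ contained in $\Lambda$ (up to stable holonomy, which I would remove by choosing the limit plaques in a single center-unstable leaf). This step is the main obstacle. The hard part is making the gaps between consecutive plaques actually tend to zero rather than stabilize. I would try a ``gap-shrinking'' argument: a maximal gap in $J\setminus\Lambda$ is expanded by SH, hence under forward iteration it reaches size $\eps$. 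Transversality then inserts a crossing plaque inside that large gap, and pulling back contradicts maximality. Bounded distortion, coming from $C^2$, is what allows the pull-back comparison.

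\emph{Step 3: transitive case.} Suppose $D\subset\Lambda$ is tangent to $E^c\oplus E^u$. The set $U=\bigcup_{z\in D}\cW^s_{loc}(z)$ has non-empty interior by the local product structure. For every $w\in U$ we have $d(f^n w,f^n z)\to0$ for some $z\in D$, and $\Lambda$ is invariant and compact, so $\omega(w)\subset\Lambda$. If $f$ is transitive, some point with dense forward orbit enters $U$, and its $\omega$-limit set equals $M$. Hence $M\subset\Lambda$, that is, $\Lambda=M$.
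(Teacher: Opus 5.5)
Your Step 3 is correct. It is essentially the argument the paper gives right after the statement: the local stable saturation of $D$ has interior, so a point with dense forward orbit enters it, and hence $\Lambda=M$.

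\textbf{The first assertion is not proved.} The paper does not prove it either; it quotes it from \cite[Theorem C]{ACW}. Your Steps 1--2 do not supply a proof. The decisive claim is that $\eps$-transversality, SH and $C^2$ force the stable projection of $\Lambda$ to fill a center interval. You only announce this, and the gap-shrinking mechanism you propose fails for the following reasons.
\begin{enumerate}
\item \emph{Scale mismatch.} To pass from density on $J$ to an interval, you must project along local stable manifolds of a fixed size $r$. The union of the full leaves $\cW^s(w)$, $w\in\Lambda$, is invariant, but it is in general dense and not closed (e.g.\ whenever $\cW^s$ is minimal). The fixed-scale projection, however, is not invariant. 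If $G\subset J$ avoids the projections at scale $r$, then $f^n(G)$ only avoids those at scale of order $\mu^n r$. The crossing plaque that $\eps$-transversality provides at time $n$ sits at stable distance up to $r_0$. Pulled back, it lies at stable distance of order $\mu^{-n}r_0$ from $G$, so it contradicts nothing.
\item \emph{No expansion at $J$.} SH expands $E^c$ only on the sub-disks $D'$. The $C^2$ distortion bound only compares center derivatives at points whose images lie in a common $\eps$-plaque at the final time. A gap attached to the fixed arc $J$ therefore need not grow. Nor can you slide it along $E^u$ into an SH region, since the projected plaques bounding it cross each other (that is exactly transversality) and pinch it off.
\item \emph{Inconsistency near the base points.} As $d(f^{kN}x,f^{kN}y)\to 0$, continuity of $\cW^u$ forces $\Pi^s\cW^u_{loc}(f^{kN}x)$ to converge to $\cW^u_{loc}(f^{kN}y)$. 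So the crossing amplitude near the base points tends to zero. Center growth can only appear far out along the leaves. You give no mechanism turning that into projected plaques meeting the fixed arc $J$ at $\delta_k$-dense heights.
\item \emph{Step 1 placement.} Nothing lets you place the crossing pair, which transversality gives somewhere in an $R$-disk, inside the SH sub-disk $D'$, which may be much smaller.
\end{enumerate}

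\textbf{The passage from filling to a disk in $\Lambda$ is also wrong as stated.} Even granting that the stable projection of $\Lambda$ contains a center interval, you only get that the local stable saturation of $\Lambda$ contains a disk $\Delta$ tangent to a $cu$-cone. Your way of removing the stable holonomy, ``choosing the limit plaques in a single center-unstable leaf'', presupposes a center-unstable foliation, which does not exist in this generality. Also, a $C^0$ disk cannot be tangent to $E^c\oplus E^u$. The standard fix is as follows. Push $\Delta$ forward, so that its points approach $\Lambda$ along exponentially contracting stable leaves. SH, transported to $\Delta$ along these converging orbits, keeps a center size bounded from below, and cone invariance gives uniform $C^1$ bounds. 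A Hausdorff limit then yields a genuine disk tangent to $E^c\oplus E^u$ inside $\Lambda$.

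That step is routine, but the filling statement it needs is not: it is exactly the content of \cite[Theorem C]{ACW}. As written, your proposal establishes only the implication from the existence of the disk to $\Lambda=M$ for transitive $f$. You need either to cite \cite{ACW}, as the paper does, or to give a genuine argument for the filling step.
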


This theorem, under a weaker notion of transversality, is \cite[Theorem C]{ACW}, but the statement above will be enough for our purposes. Note that the second part of the conclusion follows from the first, because the union of the stable manifolds of points in $D$ has non-empty interior; hence any point with a dense forward orbit belongs to a stable manifold of $\Lambda$, proving that there is some point in $\Lambda$ which has dense forward orbit. Since $\Lambda$ is closed and $f$-invariant, it must be equal to $M$. 

\begin{obs}\label{r.disk-uniform}
It follows from Theorem \ref{teo-ACW} and the definition of the SH property that
the disk $D$ may be chosen with a size which only depends on:
\begin{itemize}
\item the angle between the bundles of the partially hyperbolic splitting, 
\item the domination constants and contraction/expansion on the bundles,
\item the $C^1$-norm of $f$, 
\item the strength of the SH-property.
\end{itemize}
\end{obs}

With this in mind, one obtains the following:

\begin{cor}\label{cor-ACW}
Let $f \in \PH^1_{c=1}(M)$ and $\Lambda$ be a transverse $f$-invariant unstable lamination with the SH-property.
Let us assume that there exists a sequence of $C^2$ diffeomorphisms $(g_n)$ which converges to $f$ in the $C^1$-topology and which admit $g_n$-invariant unstable laminations $\Lambda_g$ which converge to $\Lambda$ for the Hausdorff topology. Then, $\Lambda$ contains a disk $D$ tangent to $E^c \oplus E^u$. If $f$ is transitive then $\Lambda=M$. 
\end{cor}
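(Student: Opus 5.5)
The plan is to pass the conclusion of Theorem~\ref{teo-ACW} to the limit using the uniformity in Remark~\ref{r.disk-uniform}. First I will fix the constants. The lamination $\Lambda$ is transverse, so it is $\eps$-transverse for some small $\eps<r_0$; I also fix the constants $N,\eps'$ of its SH property.

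The first step is to show that these two properties pass to the approximating laminations $\Lambda_n:=\Lambda_{g_n}$ for $n$ large, with uniform constants. For the SH property this is Remark~\ref{r.SH-robust}. Since $\Lambda_n$ lies in a small neighborhood of $\Lambda$ and $g_n$ is $C^1$-close to $f$, the same $N$ and comparable constants (say expansion $>3/2$ instead of $>2$ and radius $\eps'/2$) work, by continuity of $Dg^N$ and of $E^c_g$. For $\eps$-transversality (the $C^1$-open property noted after Definition~\ref{def.transverse}), I argue as follows. An unstable disk of $g_n$ in $\Lambda_n$ with radius $R+1$ is $C^1$-close to an unstable disk of $f$ in $\Lambda$, by Hausdorff convergence and continuity of unstable plaques. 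The latter contains a pair $x,y$ in a common local stable manifold whose holonomy sends $\cW^u_\eps(x)$ to both sides of $\cW^u_{loc}(y)$; since $\eps$-transversality is an open (strict) condition, I can take it to hold with a margin. Continuity of the stable and unstable foliations in $g$ then lets me move the pair to nearby points $x_n,y_n\in\Lambda_n$: I take $y_n=\cW^s_{loc,g_n}(x_n)\cap D_{y}$ and use that $\Lambda_n$ is close to $\Lambda$ near $y$. I expect this to be the main obstacle, because $y_n$ must lie in $\Lambda_n$ and not merely near it. To handle it I will use the robustness of crossing both sides: I pick $y_n\in\Lambda_n$ close to $y$ and $x_n$ on the $g_n$-stable leaf through $y_n$; then $x_n$ is close to $x$, but I still need $x_n\in\Lambda_n$. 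If this direct approach fails, I will go back to how transversality is used in \cite{ACW} and require the weaker formulation with approximate pairs, which is stable under perturbation. Alternatively, I will directly cite the openness of $\eps$-transversality asserted in the text, applied to families of laminations converging in the Hausdorff topology.

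Second, since each $g_n$ is $C^2$, Theorem~\ref{teo-ACW} gives a disk $D_n\subset\Lambda_n$ tangent to $E^c_{g_n}\oplus E^u_{g_n}$. By Remark~\ref{r.disk-uniform}, its size is bounded below by a constant $\rho>0$ that depends only on quantities that are uniform along the sequence: angles, domination constants and $C^1$ norms converge to those of $f$, and the SH strength is uniform by the first step.

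Third, I take a limit. The disks $D_n$ have radius $\ge\rho$ and are tangent to cone fields close to $E^c\oplus E^u$, so they are uniformly Lipschitz graphs. By Arzel\`a--Ascoli, a subsequence converges to a disk $D$ of radius $\ge\rho$ that is tangent to $E^c_f\oplus E^u_f$, since tangency to $E^c_{g_n}\oplus E^u_{g_n}$ passes to the limit as these bundles converge to those of $f$. Moreover $D\subset\Lambda$ by Hausdorff convergence $\Lambda_n\to\Lambda$. Finally, if $f$ is transitive, the argument given after Theorem~\ref{teo-ACW} applies verbatim: the union of the local stable manifolds through $D$ has nonempty interior, so a point with dense forward orbit lies on the stable manifold of some point of $\Lambda$. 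Hence $\Lambda$ contains a point with dense forward orbit, and since $\Lambda$ is closed and invariant, $\Lambda=M$.
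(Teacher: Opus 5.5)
Your plan is the paper's proof: transfer SH and $\eps$-transversality to $\Lambda_{g_n}$, apply Theorem~\ref{teo-ACW} to the $C^2$ maps $g_n$, get a uniform disk size from Remark~\ref{r.disk-uniform}, pass to the Hausdorff limit, and finish with the transitivity argument. Your limiting step is more careful than the paper's single line.

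The one step you leave open is that $\Lambda_{g_n}$ is $\eps$-transverse for large $n$. The paper does not prove this either; it only invokes the $C^1$-openness stated after Definition~\ref{def.transverse}, so your last fallback is exactly what the paper does. Your attempted direct argument, however, cannot work as set up. If you choose $y_n\in\Lambda_{g_n}$ first and then $x_n$ on its stable leaf, nothing puts $x_n$ in $\Lambda_{g_n}$. Moreover, since $d_s+d_u=\dim M-1$, an intersection between a stable plaque and an unstable plaque is never robust by itself. The ``approximate pairs'' variant would also no longer match the hypothesis of Theorem~\ref{teo-ACW} as stated.

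The missing idea is that the two-sided crossing is what \emph{produces} the intersection.
\begin{itemize}
\item Take both points inside the $g_n$-unstable disk $D^u_n\subset\Lambda_{g_n}$; they then lie in $\Lambda_{g_n}$ automatically.
\item Let $x,y$ be the pair given by $\eps/2$-transversality of $\Lambda$ (available because $\Lambda$ is transverse) in the nearby $f$-disk. Choose $a^\pm\in\cW^u_{\eps/2}(x)$ with $\Pi^s_{x,y}(a^\pm)$ on opposite sides of $\cW^u_{loc}(y)$.
\item For $n$ large, take points $a^\pm_n\in\cW^u_{\eps/2,g_n}(x_n)\subset D^u_n$ close to $a^\pm$. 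Their $g_n$-stable holonomy images lie on opposite sides of the projection of the plaque of $D^u_n$ near $y$. This follows from stability of the mod~$2$ intersection number with an arc, as in the proof of Proposition~\ref{prop-lamtransvcantor}.
\item Hence the projection of an arc from $a^+_n$ to $a^-_n$ inside $\cW^u_{\eps/2,g_n}(x_n)$ meets that plaque. This yields $x'_n\in\cW^u_{\eps/2,g_n}(x_n)$ and $y'_n\in D^u_n$ near $y$ on a common local stable manifold.
\item Since $\cW^u_{\eps,g_n}(x'_n)\ni a^\pm_n$, the holonomy $\Pi^s_{x'_n,y'_n}$ maps $\cW^u_{\eps,g_n}(x'_n)$ into both sides of $\cW^u_{loc,g_n}(y'_n)$.
\item A compactness argument over the disks $\cW^u_R(p)$, $p\in\Lambda$ (with $R$ slightly enlarged), makes the threshold on $n$ uniform. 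This gives the $\eps$-transversality of $\Lambda_{g_n}$ that the rest of your argument needs.
\end{itemize}
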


\begin{proof}
Let us consider a sequence of $C^2$ diffeomorphisms $(g_n)$ which converges to $f$ in the $C^1$-topology.
Note that the number $r_0$ satisfies the properties in \S\ref{ss.localproductstructure} for all the diffeomorphisms $g_n$. Let us fix $\varepsilon>0$. For $n$ large, the Lamination $\Lambda_{g_n}$ is $\varepsilon$-transverse (see \S\ref{ss.def-transvserse}) and satisfies the SH property (by Remark~\ref{r.SH-robust}). Hence $\Lambda_{g_n}$ contains a disc $D_n$ tangent to $E^s_{g_n} \oplus E^{c}_{g_n}$.
By Remark~\ref{r.disk-uniform} these disks may be chosen with a uniform size.
Since $\Lambda_{g_n} \to \Lambda$ in the Hausdorff topology, one concludes that $\Lambda$ must also contain a disk $D$ tangent to $E^s \oplus E^c$. 
The last part of the conclusion of the corollary follows from the existence of the disk as explained above.
\end{proof}

\subsection{A generic property} 

The next result will be proved in \S\ref{s.sectiondynamics} and will allow us to apply Corollary \ref{cor-ACW}:

\begin{teo}\label{T.SHorQA} 
There exists a dense $G_\delta$ subset $\cG \subset \PH^1_{c=1}(M)$  (resp. $\cG_{\mathrm{vol}} \subset \PH^1_{c=1, \mathrm{vol}}(M)$) such that, for $f \in \cG$  (resp. $f \in \cG_{\mathrm{vol}}$), any $f$-minimal unstable lamination $\Lambda$ satisfies one of the following properties:
\begin{itemize}
\item either $\Lambda$ is a uniformly hyperbolic transitive attractor (possibly $\Lambda=M$) and its center direction is attracting,
\item or $\Lambda$ has the SH property. 
\end{itemize} 
\end{teo}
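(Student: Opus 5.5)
We intersect the dense $G_\delta$ sets of Proposition~\ref{prop-QAgeneric}, Theorem~\ref{t.palis}, Theorem~\ref{teo-hyp} and Theorem~\ref{T.RobustSH}. Fix an $f$-minimal unstable lamination $\Lambda$; by Theorem~\ref{T.RobustSH} it is strongly non-jointly integrable. The plan is a dimension reduction in the spirit of \S\ref{s.surface}, applied to the dynamics \emph{transverse} to the lamination. Locally we quotient $\Lambda$ by the unstable plaques: choose finitely many small center-stable discs $\Sigma_1,\dots,\Sigma_m$ transverse to $E^u$ and meeting every plaque of $\Lambda$. Their intersections $\Sigma_i\cap\Lambda$, together with the holonomy along $\cW^u$, give a transverse system. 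On this system $f$ induces a (pseudo-)dynamics with a dominated splitting $E^s\oplus E^c$ and $\dim E^c=1$.

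The central step is to replace the $\Sigma_i$ by an \emph{invariant} family of transverse sections. We would build a locally invariant center-stable object $S$ containing the transverse trace of $\Lambda$, by the same graph-transform argument as in Theorem~\ref{teo-surface} with stable and unstable roles exchanged. The strong non-joint integrability is what makes the transverse picture injective, playing the role that ``no stable connection'' plays in Theorem~\ref{teo-surface}. Points of $S\cap\Lambda$ in the same local stable leaf must be distinguished, and for that we would use the unstable holonomy between sections. Once this is done, the transverse dynamics is that of a diffeomorphism preserving $E^s\oplus E^c$ with $\dim E^c=1$, and \cite{PS-IHP} applies after a generic perturbation, exactly as in the proof of Theorem~\ref{teo-hyp}. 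Here $K_n(g)$ is replaced by the maximal invariant set in a neighborhood of the sections, and we use a Baire argument over a countable basis. Conclusion: for generic $f$, either $E^c$ is uniformly contracted along $\Lambda$ in the quotient sense, or the center has a weak expansion property in the sense of Pujals--Sambarino.

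\textbf{Contracting case.} $E^c|_\Lambda$ is contracting, so $\Lambda$ is uniformly hyperbolic with stable bundle $E^s\oplus E^c$ and saturated by unstable leaves. Hence it is a hyperbolic attractor (as in the end of Theorem~\ref{teo-hyp}). Transitivity follows from Corollary~\ref{c.minimaltransitive}, which gives the first alternative.

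\textbf{Non-contracting case.} We want SH, i.e.\ every unstable disc of radius $\eps$ contains a subdisc on which $Df^N|_{E^c}$ expands by $2$. We argue by contradiction. Failure of SH for all $N$ yields, via Lemma~\ref{p.fminimalit} and a limiting argument, an invariant unstable sublamination (hence all of $\Lambda$, by minimality) on which no subdisc is center-expanded. This gives an $f$-invariant measure, or a set, on $\Lambda$ with non-positive center exponent everywhere along unstable discs. We then feed this into the Pujals--Sambarino dichotomy on the transverse sections. There are two possibilities:
\begin{itemize}
\item The center is contracted, which is the previous case.
\item There exist periodic points or normally hyperbolic center curves with non-expanding center. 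The curves are excluded generically, as in Theorem~\ref{teo-hyp}. A periodic point in $\Lambda$ with contracting center makes $\Lambda$ a quasi-attractor by Proposition~\ref{prop-QAgeneric}. Theorem~\ref{t.palis} and Corollary~\ref{c.palis} then provide a hyperbolic set with expanding center whose stable leaves are hit by an open set of unstable plaques; by $f$-minimality and Lemma~\ref{p.fminimalit}, these plaques are in fact hit from every large unstable disc. Iterating gives the required center-expanded subdiscs, i.e.\ SH, unless $\Lambda$ is uniformly hyperbolic with contracting center.
\end{itemize}

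In the volume-preserving case there are no attractors other than $M$, and the same perturbations are done conservatively (Moser's trick, as in Theorem~\ref{teo-hyp}).

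\textbf{Main obstacle.} The hardest step will be constructing the invariant transverse sections and checking that the induced transverse dynamics really satisfies the hypotheses of \cite{PS-IHP}, in particular that generic perturbations can be made inside the sections. We would attempt it by mimicking the plaque-family construction of \cite{BC-Center}, using the holonomy along $\cW^u$ inside $\Lambda$ to glue local center-stable plaques into a single locally invariant object.
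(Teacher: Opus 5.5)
\textbf{There is a genuine gap, and it is the step you flag as the main obstacle.}

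\emph{The invariant section cannot be built as you describe.} Theorem~\ref{teo-surface}, with the roles of $s$ and $u$ exchanged, only produces a locally invariant center-stable submanifold around a compact \emph{invariant} set that meets each \emph{unstable} leaf in at most one point. Neither $\Lambda$ nor its trace $\Sigma\cap\Lambda$ on transversals is such a set. After one iterate, the image of a plaque crosses the sections many times, so the unstable holonomy gives a multivalued ``transverse dynamics''. Gluing center-stable plaques along $\cW^u$-holonomy does not make it single-valued.

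Strong non-joint integrability plays no role here. It says that \emph{stable} holonomy does not preserve unstable plaques, whereas stable connections are harmless for a center-stable object. What is actually needed is a continuous, injective choice of one branch, and that is only possible when $\Sigma\cap\Lambda$ is totally disconnected.

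\emph{The missing idea is therefore a dichotomy on the transverse structure of $\Lambda$.}
\begin{itemize}
\item If $\Lambda$ is not transversally Cantor, this is where strong non-joint integrability is used. An intersection-number argument (Proposition~\ref{prop-lamtransvcantor}) gives an open set of points with $\omega$-limit in $\Lambda$. Genericity (Proposition~\ref{prop-QAgeneric}) then makes $\Lambda$ a quasi-attractor, and Corollary~\ref{c.palis} with $f$-minimality gives SH.
\item If $\Lambda$ is transversally Cantor, take a global section $\Sigma$, unstable discs $D_x$, and a continuous injective $\phi$ on $\Sigma\cap\Lambda$ with $f^N(D_x)\supset D_{\phi(x)}$. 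This yields an $f^N$-invariant compact set $K\subset\Lambda$ with no unstable connections, whose stable set meets every unstable leaf of $\Lambda$ uniformly (Proposition~\ref{p.section}).
\end{itemize}
Only at this point can Theorem~\ref{teo-hyp} be applied, to $f^N$ with roles exchanged, to get that $K$ is hyperbolic. Your generic set must accordingly be chosen to work for all iterates of $f$.

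\emph{Your treatment of the two outcomes also fails as written.}
\begin{itemize}
\item Hyperbolicity of the transverse dynamics only controls $E^c$ on the section $K$, which is a proper subset of $\Lambda$. So ``contracting in the quotient sense'' does not give $E^c|_\Lambda$ contracting, nor $\Lambda$ a hyperbolic attractor.
\item The paper instead handles the case where $\Lambda$ is uniformly hyperbolic at the outset, and rules out contraction of $E^c$ on $K$. By shadowing, such contraction would produce a periodic orbit whose stable manifold meets an unstable leaf of $\Lambda$, hence a periodic point in $\Lambda$. That case is already reduced to the quasi-attractor argument you describe correctly.
\item Your contradiction route, ``failure of SH $\Rightarrow$ an invariant measure with non-positive center exponent $\Rightarrow$ Pujals--Sambarino'', is unsupported and unnecessary. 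Once $E^c$ is uniformly expanded on $K$, SH follows directly. Every unstable leaf of $\Lambda$ contains a point whose iterates $f^{kN}$, for $k\geq k_0$ with $k_0$ uniform, stay close to $K$, where the center is uniformly expanded.
\end{itemize}
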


Note that in the volume preserving case the first possibility can only happen if $f$ is Anosov (i.e. $\Lambda=M$).

\section{Proof of the main results}\label{s.proofsAB}

In this section we establish the proof of Theorem C assuming the results stated in the previous sections.
Theorem A is a direct corollary of Theorem C.
Since Theorem B deals perturbations which may not be volume-preserving, we will give a short proof at the end of this section. 

\subsection{Proof of Theorem C (dissipative case)} \label{ss.dissipative}
We consider $C^1$-generic diffeomorphism $f$: it belongs to a dense $G_\delta$ subset $\cG \subset \PH^1_{c=1}(M)$ such that Proposition \ref{prop-QAgeneric},  Corollary \ref{C.QAorTrans}, and Theorems \ref{t.palis}, \ref{T.RobustSH} and \ref{T.SHorQA} hold. 

Let $\Lambda$ be an $f$-minimal saturated set. By Theorem \ref{T.RobustSH}, it admits a continuation $\Lambda_g$ for any $g$ $C^1$-close to $f$. As a preliminary discussion, we distinguish two settings:
\begin{itemize}
\item If $\Lambda$ is not transverse or does not have the SH property, then Corollary \ref{C.QAorTrans} and Theorem \ref{T.SHorQA} imply that $\Lambda$ is a uniformly hyperbolic transitive attractor such that $E^c$ is contracted.
The unstable manifolds of $\Lambda$ coincide with the leaves of $\cW^u$, hence the classical theory of hyperbolic attractors shows that $\Lambda$ is a finite union of minimal unstable laminations which are cyclically permuted by $f$.
\item If $\Lambda$ is transverse and has the SH-property, then by Corollary \ref{cor-ACW} it contains a disc tangent to $E^c \oplus E^u$. Hence, there is a non-empty open set of points in $M$ whose $\omega$-limit set is contained in $\Lambda$. Proposition \ref{prop-QAgeneric} then implies that $\Lambda$ is a quasi-attractor.
Moreover Proposition~\ref{prop-njifinite} implies that $\Lambda$ is a finite union of minimal unstable laminations which are cyclically permuted by $f$.
\end{itemize}
We are now able to consider the two cases of the dichotomy in the theorem.
\medskip

\paragraph{\bf First case: $f$ is chain-transitive.}
Then $M$ is a quasi-attractor and $M$ is an $f$-minimal unstable lamination, which is a finite union of minimal unstable laminations. Since $M$ is connected, the unstable lamination $\cW^u$ is minimal. The same applies to $f^{-1}$ and the stable lamination $\cW^s$ is also minimal.
If $f$ is not Anosov, then both $\cW^s$ and $\cW^u$ have the SH property and are transverse:
Proposition \ref{prop-robustminimal} shows that both foliations $\cW^s,\cW^u$ are robustly minimal
and $f \in \cO_m$
If $f$ is Anosov, this is also the case for all its $C^1$-perturbations and $f \in \cO_m$ also.

\begin{obs}\label{r.uniqueness-minimal}
In the case $f$ is a transitive Anosov (for instance if $E^c$ is uniformly expanded), then one of the strong foliations is robustly minimal (here $\cW^s$). We claim that for any diffeomorphism $g$ that is $C^1$-close, the other lamination (here $\cW^u$) contains a unique minimal sublamination. Indeed as $\cW^u$ is minimal and transverse for $f$, there exists $R>0$ such that for any $g$ close to $f$ and any $x,y\in M$, the leaves $\cW^u_R(x)$ and $\cW^u_R(y)$ intersect a same stable leaf. This implies that any two minimal unstable laminations intersect, hence coincide.
See also the proof of \cite[Proposition 1.5]{ACW}.
\end{obs}

\paragraph{\bf Second case: $f$ is not chain-transitive.}
By Theorem \ref{T.RobustSH} there are finitely many $f$-minimal unstable laminations.
From the discussion above, they coincide with the proper quasi-attractors, and the minimal unstable laminations are the connected components of the quasi-attractors. Consequently, there are finitely many minimal unstable laminations, 
$\Lambda_1,\dots,\Lambda_k$, each of them is periodic. Moreover the semi-continuity in Theorem \ref{T.RobustSH}
implies that for any diffeomorphism $g$ that is $C^1$ close to $f$, any minimal unstable lamination is Hausdorff close to one of the $\Lambda_i$.
We will consider each of them separately. In the following we fix $\Lambda_i$ and denote by $\tau$ its period.

If $L:=\Lambda_i\cup f(\Lambda_i)\cup \dots\cup f^{\tau-1}(\Lambda_i)$ is uniformly hyperbolic with contracting center bundle, then for any diffeomorphism $g$ close to $f$ any minimal unstable lamination $\Lambda$ for $g$ that is Hausdorff close to $\Lambda_i$ coincides with the hyperbolic continuation of $\Lambda_i$, hence it is still the connected component of a proper uniformly hyperbolic transitive attractor.

If $L=\Lambda_i\cup f(\Lambda_i)\cup \dots\cup f^{\tau-1}(\Lambda_i)$ is uniformly hyperbolic with expanding center bundle, 
then it is transverse and arguing as in Remark~\ref{r.uniqueness-minimal}, for any diffeomorphism $g$ close to $f$,
there exists a unique minimal unstable lamination for $g$ in a neighborhood of $\Lambda_i$. It is contained in the hyperbolic continuation of $\Lambda_i$, which is a connected component of a proper uniformly hyperbolic transitive attractor.

If $L=\Lambda_i\cup f(\Lambda_i)\cup \dots\cup f^{\tau-1}(\Lambda_i)$ is not uniformly hyperbolic,
then by Theorem~\ref{t.palis}, $\Lambda_i$ contains a hyperbolic periodic point $p$ whose stable space is
$E^s(p)\oplus E^c(p)$. As a consequence for any diffeomorphism $g$ that is $C^1$-close to $f$,
any minimal unstable lamination $\Lambda_g$ in a neighborhood of $\Lambda_i$ intersects the stable set of the continuation $p_g$, hence coincides with $\overline {\cW^u(p_g)}$. In particular such a minimal unstable lamination is unique. Since $L$ has arbitrarily small trapping neighborhoods,
for any $g$ close to $f$ there exists a quasi-attractor with a connected component close to $\Lambda_i$;
such a component contains a minimal unstable lamination, which then has to coincide with $\Lambda_g$ by uniqueness.

We have proved that for any diffeomorphism $g$ close to $f$: (1) any minimal unstable lamination is contained in the connected component of a quasi-attractor
and is close to some $\Lambda_i$; (2) any connected component of a quasi-attractor contains a minimal unstable lamination which is unique.
In particular $g \in \cO_a$.
\medskip

\paragraph{\bf Conclusion.}
We have proved that the open set $\cO_m\cup \cO_a$ is dense in $\PH^1_{c=1}(M)$.
By Corollary~\ref{c.minimaltransitive} the diffeomorphisms in $\cO_m$ are robustly topologically mixing.
This completes the proof of Theorem C in the dissipative case.

\subsection{Proof of Theorem C (volume-preserving case) and Theorem~B}
The proof is similar to the dissipative case. By Corollary~\ref{C.QAorTrans} and Theorem~\ref{T.SHorQA}, there exists an open and dense subset $\cO^1_\mathrm{vol}\subset \PH^1_{c=1, \mathrm{vol}}(M)$ of diffeomorphisms $f$ such that: (1) both $\cW^s$ and $\cW^u$ are $\varepsilon$-transverse
for some $\varepsilon>0$ smaller than $r_0$; (2) either $f$ is Anosov or both $\cW^s$ and $\cW^u$ satisfy the SH property. By Theorem~\ref{teo-ACW} and Proposition~\ref{prop-njifinite}, for any $C^2$-diffeomorphism $f\in \cO^1_\mathrm{vol}$
both foliations $\cW^s$, $\cW^u$ are minimal. By Proposition~\ref{prop-robustminimal}, there exists a $C^1$-neighborhood $\cO_\mathrm{vol}\subset \PH^1_{c=1, \mathrm{vol}}(M)$ of the set of $C^2$ diffeomorphisms in $\cO^1_\mathrm{vol}$ such that for any $f\in \cO_\mathrm{vol}$ which is not Anosov both strong foliations $\cW^s,\cW^u$ are robustly minimal. Hence any diffeomorphism in $\cO_\mathrm{vol}$ satisfies the conclusions of Theorem C.

Let us consider again any diffeomorphism $f\in \cO_\mathrm{vol}$.
If $f$ is Anosov, it is structurally stable and thus robustly topologically mixing in the sense that small $C^1$-perturbations (conservative or not) remain topologically mixing.
Otherwise both foliations are robustly minimal, and by Corollary~\ref{c.minimaltransitive} we get that $f$ is robustly topologically mixing.


\section{Transversality}\label{s.transverse}

Here we show Theorem \ref{t.transverse-periodic} and Corollary \ref{C.QAorTrans}.
Theorem \ref{t.transverse-periodic} is a direct consequence of Propositions~\ref{prop-lyapunovexponents} and~\ref{p.transverse} proved in the next sections. In particular we obtained the following stronger dichotomy for an $f$-minimal lamination $\Lambda$ which is strongly non-jointly integrable:

\begin{itemize}
\item Either $\Lambda$ contains a dense set of periodic points with negative center Lyapunov exponent.
\item Or there is a strong stable manifold which intersects $\Lambda$ in at least three points and $\Lambda$ is transverse. 
\end{itemize}

\subsection{Lyapunov exponents and entropy}\label{ss.prelimlyap}

The following result is based on \cite{ABC} and entropy considerations (compare with \cite[Proposition 7.4]{CPoS}). 

\begin{prop}\label{prop-lyapunovexponents}
Let $\Lambda$ be an $f$-minimal unstable lamination. Then there exists a dense set of points $z_0\in \Lambda$ such that, either $\cW^s_{loc}(z_0)\cap \Lambda$ is uncountable, or $z_0$ is periodic with a negative center Lyapunov exponent.
\end{prop}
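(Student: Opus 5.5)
We split according to the centre Lyapunov exponents of the invariant measures supported on $\Lambda$. Let $\mu$ be an ergodic $f$-invariant measure supported on $\Lambda$; since $\dim E^c=1$, it has a single centre exponent $\lambda^c(\mu)$. The proof has two cases.

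\textbf{Case 1: some ergodic $\mu$ on $\Lambda$ has $\lambda^c(\mu)\geq 0$.}
\begin{enumerate}
\item We first produce a measure of positive entropy. For this we consider the measures of maximal $u$-entropy (or Gibbs $u$-states) supported on $\Lambda$. Since $\Lambda$ is saturated by $\cW^u$-leaves, which are uniformly expanded, such a measure has entropy at least the volume growth of unstable disks. This growth is at least $\log\lambda\cdot d_u>0$.
\item If the centre exponent is non-negative, the entropy is not carried solely by the $\cW^u$ direction. By the Ledrappier--Young type argument used in \cite[Prop.~7.4]{CPoS}, the conditional measures along $\cW^s\oplus E^c$ partitions are then non-atomic in the stable direction. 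Hence $\mu$-a.e.\ point $z$ has $\cW^s_{loc}(z)\cap\Lambda$ uncountable.
\item The alternative is that every such measure has conditionals on local stable sets that are Dirac masses. In that case we argue that the $\cW^s\oplus E^c$ dynamics on $\Lambda$ is injective along strong stable plaques $\mu$-a.e. Combined with the $u$-entropy formula, this forces negative centre exponent, which is a contradiction.
\item To obtain density, we use that the support of $\mu$ is an invariant unstable lamination in $\Lambda$. By $f$-minimality it equals $\Lambda$, so the set of good points $z_0$ is dense.
\end{enumerate}

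\textbf{Case 2: every ergodic measure on $\Lambda$ has negative centre exponent.}
\begin{enumerate}
\item By \cite{ABC}, every ergodic measure on $\Lambda$ is approximated by periodic orbits whose exponents are close to those of the measure. For a measure with negative centre exponent, these periodic orbits are hyperbolic with stable dimension $d_s+1$.
\item Their strong unstable manifolds are shadowed inside the lamination, so the periodic orbits lie close to $\Lambda$.
\item To place the periodic points inside $\Lambda$, we use a Pliss-lemma argument. Uniform negative exponents on $\Lambda$ give a uniform size of centre-stable plaques at points of $\Lambda$.
\item A periodic point shadowing a typical orbit of $\Lambda$ then has a local centre-stable manifold meeting $\cW^u_{loc}$ of a point of $\Lambda$. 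Iterating forward and using that $\Lambda$ is closed and $\cW^u$-saturated, the periodic orbit itself lies in $\Lambda$. Equivalently, the shadowing point is in $\Lambda$ by the closing lemma of \cite{ABC} applied within the lamination.
\item Density of these periodic points in $\Lambda$ again follows from $f$-minimality. The closure of the unstable manifolds of the periodic orbits is an invariant unstable lamination inside $\Lambda$, hence equals $\Lambda$. Periodic points are then dense up to shadowing.
\end{enumerate}

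\textbf{Main obstacle.} The hardest step is Case~1, turning a non-negative centre exponent into uncountably many points of $\Lambda$ on a single local stable leaf. I would run the entropy comparison there carefully. Let $\xi$ be a partition subordinate to the one-dimensional quotient of $\cW^{cs}$. Suppose $\cW^s_{loc}(z)\cap\Lambda$ were countable, and hence $\mu$-trivial on a full measure set. Then $h_\mu(f)$ would equal the unstable entropy, and the Ruelle inequality along the centre direction would force $\lambda^c\leq 0$. Exploiting strictness would then give a contradiction, using the uniform $u$-volume growth.
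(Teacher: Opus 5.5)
There is a genuine gap: your dichotomy is set up on the wrong measure. The Case~1 hypothesis only gives \emph{some} ergodic measure with $\lambda^c\geq 0$. Such a measure may have zero entropy and small support (e.g.\ a periodic orbit of $\Lambda$ with expanding centre), so your entropy and density arguments do not apply to it. The measure you actually use in steps 1--4 is a Gibbs $u$-state, and nothing in the case hypothesis controls its centre exponent.

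The uncovered situation is the following: an ergodic Gibbs $u$-state $\mu$ on $\Lambda$ has $\lambda^c(\mu)<0$, while another ergodic measure on $\Lambda$ has $\lambda^c\geq 0$. For instance, a non-hyperbolic quasi-attractor as in Theorem~\ref{t.palis} contains periodic orbits with expanded centre, while its $u$-states may well contract the centre on average. This situation lies in Case~1, where step~3 claims a contradiction that does not exist: a negative exponent for $\mu$ is compatible with another measure having non-negative exponent. Case~2 does not cover it either. Its hypothesis (all ergodic measures on $\Lambda$ have negative centre exponent) forces $E^c|_\Lambda$ to be uniformly contracted, since $E^c$ is one-dimensional and continuous, and that is a very special situation.

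The repair, which is exactly the paper's argument, is to split on the centre exponent of one ergodic Gibbs $u$-state $\mu$. Its support is an invariant unstable lamination, hence equals $\Lambda$ by $f$-minimality, and $h_\mu(f)\geq \int\log|\det Df|_{E^u}|\,d\mu>0$.

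If $\lambda^c(\mu)<0$, \cite[\S 8]{ABC} gives periodic points $p_n$ accumulating on $\mu$-typical points, with centre-stable plaques $W^{cs}_{loc}(p_n)$ of uniform size. These plaques meet unstable plaques contained in $\Lambda$, so the orbit of $p_n$ lies in $\Lambda$. This is your step~4 of Case~2, but it needs only $\mu$, not uniform contraction. Full support of $\mu$ then gives density.

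If $\lambda^c(\mu)\geq 0$, then for $f^{-1}$ the only expanded direction at $\mu$-typical points is $E^s$. So the positive entropy $h_\mu(f^{-1})=h_\mu(f)$ is carried by $\cW^s$ (Ledrappier--Young for $f^{-1}$). The conditionals of $\mu$ along $\cW^s$ are therefore non-atomic, so $\cW^s_{loc}(x)\cap\Lambda$ is uncountable for $\mu$-a.e.\ $x$, which gives a dense set.

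Your ``main obstacle'' paragraph is essentially this second branch, although the relevant entropy is that of $f^{-1}$, not the unstable entropy of $f$. So the ingredients are present; it is the case structure that must be organised around the single fully supported, positive-entropy measure $\mu$.
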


\begin{proof}
Let us consider an ergodic u-Gibbs measure $\mu$ on $\Lambda$ (see e.g. \cite[\S{11}]{BDV}). It has positive entropy and full support since $\Lambda$ is $f$-minimal.

If the Lyapunov exponent of $\mu$ along the center direction is negative, then $\mu$-almost every point is the limit of periodic points $p_n$ having a uniformly large stable manifold $W^{cs}_{loc}(p_n)$ tangent to $E^s \oplus E^c$ (see \cite[\S 8]{ABC}). Since $p_n$ is close to $\Lambda$, for $n$ large enough the plaque $W^s_{loc}(p_n)$ intersects $\Lambda$. Since $\Lambda$ and the orbit of $p_n$ are invariant, this implies that the orbit of $p_n$ belongs to $\Lambda$. Since $\mu$ is fully supported, the set of such periodic points is dense in $\Lambda$.

If the Lyapunov exponent of $\mu$ along the center direction is non-negative and since $\mu$ has positive entropy,
the entropy of balls inside $\cW^s_{loc}(x)$ under negative iterations is positive for $\mu$-almost every $x$.
In particular $\cW^s_{loc}(x)\cap \Lambda$ has positive dimension for $x$ in a dense subset of $\Lambda$.
\end{proof}

\subsection{Unstable laminations with no periodic points}

\begin{prop}\label{p.transverse}
Let $\Lambda$ be an $f$-minimal unstable lamination which is strongly non-jointly integrable.
If there exists $z_0$ such that $\cW^s(z_0)\cap \Lambda$ contains at least $3$ points then $\Lambda$ is transverse.
\end{prop}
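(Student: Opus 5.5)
The plan is to first produce one configuration where two unstable plaques of $\Lambda$ meet a local stable manifold on opposite sides of a third plaque. We then spread this configuration over $\Lambda$ using $f$-minimality (Lemma~\ref{p.fminimalit}) and backward contraction along $\cW^s$.

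Step 1 (three points, local picture). Let $x_1,x_2,x_3$ be three distinct points of $\cW^s(z_0)\cap\Lambda$. Iterating forward by $f^n$ contracts the stable arcs joining them. Since the leaves of $\cW^s$ are simply connected, we may assume they all lie in a single local stable manifold $\cW^s_{loc}(x)$. Because $\dim E^c=1$, each local unstable plaque $\cW^u_{loc}(x_i)$, projected by stable holonomy onto a fixed center-unstable disk $\cD^{cu}$ through $x_1$, is a codimension one hypersurface $\Gamma_i$. Its position along the one-dimensional center direction at the base point is the ordering of the $x_i$ along the stable leaf. We choose the labels so that $\Gamma_1$ is "between" $\Gamma_2$ and $\Gamma_3$ near $x_1$. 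In the center coordinate transverse to $\Gamma_1$, the projections of $x_2$ and $x_3$ lie on opposite sides of $\Gamma_1$. This is the situation we want to reach, except that here the base points are distinct rather than being an $\eps$-disk $\cW^u_\eps(x)$ mapped across $\cW^u_{loc}(y)$.

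If the three projections do not lie on both sides, they lie on one side, and we only get a one-sided crossing. This is exactly where strong non-joint integrability is needed. Take a pair $x_i\neq x_j$. Definition~\ref{def.NJI} gives points $z\in\cW^u_{loc}(x_i)$ arbitrarily close to $x_i$ whose holonomy leaves $\cW^u_{loc}(x_j)$. We then use the connectedness argument (the intersection-number claim in the proof of Proposition~\ref{prop-lamtransvcantor}) to compare sides. Holonomy from $\cW^u_\eps(x_1)$ to $D_{x_2}$ sends $x_1$ to $x_2$, which lies on one side, say above $\cW^u_{loc}(x_3)$ after projection. Holonomy to $\cW^u_{loc}(x_3)$ must meet it. Then we take the pair $(x_2,x_3)$: since $x_1$ and $x_3$ sit on opposite sides relative to $\Gamma_2$, the image of $\cW^u_\eps(x_2)$ under $\Pi^s_{x_2,x_3}$ is a connected set. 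It contains the point $x_3$ on $\cW^u_{loc}(x_3)$, and by non-joint integrability it is not contained in it. What we actually need is that it meets both components.

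Step 1 is the main obstacle: strong non-joint integrability alone only yields points off the plaque, perhaps on one side only. We will try to get the second side from the third point, by using the ordering $x_2<x_1<x_3$ on the stable segment. Pushing $\cW^u_\eps(x_1)$ by holonomy along the stable segment, its image on the $x_2$ side and its image on the $x_3$ side must bracket the plaques. Combined with non-joint integrability applied at the middle point, this should give a middle plaque whose holonomy images cross to both sides of a neighbouring plaque.

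Step 2 (propagation). The crossing property for a pair $(x,y)$ with a given $\eps$ is open in the base point and invariant under iterates up to rescaling. Backward iterates of the crossing configuration keep $x,y$ in a common local stable manifold, and unstable disks shrink, so $\eps$ becomes smaller. Hence for each small $\eps$ we first produce a configuration at scale $\eps$ near some point of $\Lambda$, using backward iterates, or forward iterates combined with the fact that $\cW^u_\eps(f^n x)\supset f^n\cW^u_{\eps\lambda^{-n}}(x)$. Openness then gives a neighbourhood $V$ in $\Lambda$ where it holds. Lemma~\ref{p.fminimalit} provides $n$ such that every unstable disk in $\Lambda$ of radius $r$ has an iterate $f^k(D)$, $k\le n$, meeting $V$. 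Pulling back by $f^{-k}$ keeps the stable pair and its two-sided crossing, at a scale bounded below in terms of $n$. Taking $R$ large compared with the expansion over $n$ iterates, we get that every disk of radius $R$ contains such a pair at scale $\eps$. This proves $\eps$-transversality for every small $\eps$, that is, transversality.
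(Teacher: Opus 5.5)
Your Step~1 is where the proof has to happen, and it does not go through. The picture is already off. Since $x_1,x_2,x_3$ lie on one local stable leaf, stable holonomy sends all of them to the same point of $\cD^{cu}$, so the three projected plaques $\Gamma_1,\Gamma_2,\Gamma_3$ pass through a common point. There is no ``ordering along the stable leaf'' (which is transverse to $\cD^{cu}$ and may have dimension $d_s>1$) that places $x_2,x_3$ on opposite sides of $\Gamma_1$.

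More seriously, no local argument can produce the two-sided crossing. Three hypersurfaces through a point, pairwise distinct arbitrarily close to it (all that strong non-joint integrability provides) but pairwise non-crossing, are a perfectly consistent configuration: e.g.\ $\{c=0\}$, $\{c=|u|^2\}$, $\{c=-|u|^2\}$ in coordinates $(u,c)$ on $\cD^{cu}$. So your sentence ``this should give a middle plaque whose holonomy images cross to both sides'' is exactly the unproved claim.

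The paper argues by contradiction and uses the dynamics:
\begin{itemize}
\item Assume $\Lambda$ is not $\eps$-$f$-transverse. Then the three plaques do not cross, and one lies between the other two. Every plaque of $\Lambda$ near a point $w$ of the middle one is squeezed between the outer ones, hence meets $\cW^s_{loc}(z_0)$ (Lemma~\ref{lem-openset}, via \cite[Lemma 7.1]{CPoS}).
\item Take a pair $x_0,y_0\in\Lambda$ on a common local stable leaf. Strong non-joint integrability makes $\Omega_k=\{z\in\cW^u_{loc}(x_k):\cW^s_{loc}(z)\cap\cW^u_{loc}(y_k)=\emptyset\}$ open and dense.
\item A ball in $\Omega_k$ is grown by forward iteration up to radius $\lambda_2^{-1}\eps$. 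Then $f$-minimality brings a bounded iterate of it into the squeezing region. There it must meet the stable leaf of a point coming from $\cW^u_{loc}(y_\ell)$, which is a contradiction.
\end{itemize}
This growth-plus-squeezing mechanism is the missing idea. The third point serves to create the squeezing region, not a crossing.

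Step~2 has two further problems.
\begin{itemize}
\item Backward iteration expands stable distances, so backward iterates of a crossing pair do not stay in a common local stable manifold. You cannot shrink the scale that way; the paper simply fixes $\eps$ first and runs the contradiction at that scale.
\item Lemma~\ref{p.fminimalit} only brings some iterate $f^k(D)$ near $x$. The partner $y$ lies on another plaque, and reaching it may need a different iterate $f^{k'}(D)$. Since $\Lambda$ is only $f$-minimal, a single leaf need not be dense in $\Lambda$. What you get is therefore only $\eps$-$f$-transversality (times $n_x\neq n_y$ allowed), not Definition~\ref{def.transverse}, which requires $x,y$ in the same disk.
\end{itemize}
The paper needs a separate lemma for this upgrade. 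An $f$-minimal $f$-transverse lamination is a disjoint union $\Lambda'\cup\dots\cup f^{\ell-1}(\Lambda')$ of minimal laminations, and the stable leaves of $\Lambda'$ do not meet the other components. This is what allows taking $n_x=n_y$.
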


For the proof we modify Definition~\ref{def.transverse} and introduce another notion of transversality:
an $f$-invariant lamination $\Lambda$ is $\varepsilon$-$f$-transverse, if
there are $R>0$ and $N\geq 0$ such that for every unstable disk with internal radius $R$ in $\Lambda$
there exist $x\neq y$ in the disk and $0\leq n_x,n_y\leq N$ with the following property:
$x':=f^{n_x}(x)$ and $y':=f^{n_y}(y)$ belong to a same local stable manifold and the holonomy $\Pi^s_{x',y'}$ maps $\cW^u_\varepsilon(x')$ into both connected components of $D_y\setminus \cW^u_{loc}(y)$.
And $\Lambda$ is \emph{$f$-transverse} if it is $\varepsilon$-$f$-transverse for all $\varepsilon>0$.

Note that if $\Lambda$ is $f$-minimal and if there exist $x,y$ in a same local stable manifold
such that $\Pi^s_{x,y}\cW^u_\varepsilon(x)$ intersects both connected components of
$D_y\setminus \cW^u_{loc}(y)$, then $\Lambda$ is $\varepsilon$-$f$-transverse.
\medskip

We will first prove that $\Lambda$ is $f$-transverse and proceed by contradiction. We fix $\eps>0$ smaller than the constant $r_0$ of local product structure (see  \S\ref{ss.localproductstructure}). We consider $z_0\in\Lambda$ such that $\cW^s_{loc}(z_0)\cap \Lambda$ contains at least $3$ points and assume that $\Lambda$ is not $\eps$-$f$-transverse.

\begin{lema}\label{lem-openset}
There exists an open set $U\subset M$ with $U \cap \Lambda \neq \emptyset$ such that $\cW^u_{\eps}(y) \cap \cW^s_{loc}(z_0) \neq \emptyset$ for every $y \in \Lambda \cap U$.
\end{lema}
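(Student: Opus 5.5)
The idea is to exploit the non-transversality assumption to show that the three points of $\cW^s_{loc}(z_0)\cap\Lambda$ sit on local unstable plaques that are ``ordered'' along the center direction, and then to use minimality to produce the open set. Let $a_1,a_2,a_3$ be three distinct points of $\cW^s_{loc}(z_0)\cap\Lambda$. Fix a center-unstable disk $\cD^{cu}$ through $z_0$, transverse to $E^s$, and project the plaques $\cW^u_{loc}(a_i)$ to $\cD^{cu}$ by stable holonomy. This gives three codimension-one (inside $\cD^{cu}$) hypersurfaces $\Gamma_1,\Gamma_2,\Gamma_3$ through the common point $z_0$.

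First I will use that $\Lambda$ is not $\eps$-$f$-transverse. Together with the note after the definition, this says that for any two of the points $a_i,a_j$ the holonomy image of $\cW^u_\eps(a_i)$ lies (weakly) on one side of $\Gamma_j$. So the hypersurfaces $\Gamma_i$ are pairwise tangent-like at $z_0$: they touch without crossing, locally in the $\eps$-scale. Strong non-joint integrability prevents $\Gamma_i$ and $\Gamma_j$ from coinciding near $z_0$. Hence the three plaques are weakly ordered along the one-dimensional center direction, and I can pick the index, say $2$, whose hypersurface lies ``between'' $\Gamma_1$ and $\Gamma_3$. Here I use that $\dim E^c=1$, so that $\cD^{cu}\setminus\Gamma_j$ has two sides. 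Moreover, $\Gamma_1$ lies on one side of $\Gamma_2$ and $\Gamma_3$ on the other, each not identically equal to $\Gamma_2$.

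Next I define $U$ as a small neighborhood of a point of $\cW^u_{loc}(a_2)$ chosen so that it lies strictly between $\Gamma_1$ and $\Gamma_3$ after projection. Such a point exists by strong non-joint integrability. Take $y\in\Lambda\cap U$ and project $\cW^u_\eps(y)$ to $\cD^{cu}$; this gives a hypersurface piece $\Gamma_y$ close to the corresponding piece of $\Gamma_2$.

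The key claim is that $\Gamma_y$ must pass through $z_0$, that is, $\cW^u_\eps(y)\cap\cW^s_{loc}(z_0)\neq\emptyset$. Applying non-$\eps$-$f$-transversality to the pairs $(y,a_1)$ and $(y,a_3)$, $\Gamma_y$ cannot cross $\Gamma_1$ or $\Gamma_3$ inside the $\eps$-scale. But $\Gamma_y$ starts in the region between them, and near $z_0$ that region pinches to the single point $z_0$, since $\Gamma_1$ and $\Gamma_3$ both pass through $z_0$. A continuous hypersurface piece trapped in the closed region between $\Gamma_1$ and $\Gamma_3$ and extending over the $\eps$-neighborhood is therefore forced through $z_0$. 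I will make this rigorous with the mod-$2$ intersection-number argument used in Proposition~\ref{prop-lamtransvcantor}. Take an arc $I$ in $\cD^{cu}$ through $z_0$, transverse to the plaques. Since $\Gamma_y$ is $C^0$-close to $\Gamma_2$, it meets $I$ with intersection number $1$. Now deform $I$ to arcs crossing the pinch point: $\Gamma_y$ avoids the open regions beyond $\Gamma_1$ and beyond $\Gamma_3$. So if it missed $z_0$, the intersection number would have to change on a path of arcs that only meet $\Gamma_y$ in its complement, which is a contradiction.

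The main obstacle I expect is this last topological step. The ``sides'' are only weak, since the $\Gamma_i$ may touch along large sets. So I need to ensure that the region between $\Gamma_1$ and $\Gamma_3$ really pinches at $z_0$ on the $\eps$-scale, and that the choice of $U$ stays uniform, with $U$ small relative to $\eps$ and the plaque $\cW^u_\eps(y)$ reaching $z_0$'s neighborhood. I would handle this by first choosing $U$ very close to $z_0$ itself, so that $\cW^u_\eps(y)$ covers a fixed neighborhood of $z_0$ after projection.
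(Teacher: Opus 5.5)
Your proposal is correct and follows essentially the same route as the paper: stable-holonomy projections of three plaques through $z_0$, non-crossing from the failure of $\eps$-$f$-transversality, non-coincidence from strong non-joint integrability, $U$ taken around a point of the middle plaque lying strictly between the two outer ones, and a trapping argument forcing the projected plaque of $y$ through $z_0$. The only difference is the last step: the paper invokes \cite[Lemma 7.1]{CPoS} with an arc $\beta$ through $z_0$ whose two half-arcs lie in the open outer regions, whereas you redo it with the mod-$2$ intersection number from Proposition~\ref{prop-lamtransvcantor}; choosing your arc $I$ to be such a $\beta$ from the outset makes your deformation step unnecessary.
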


\begin{figure}[htbp]
\begin{center}
\includegraphics[scale=0.5]{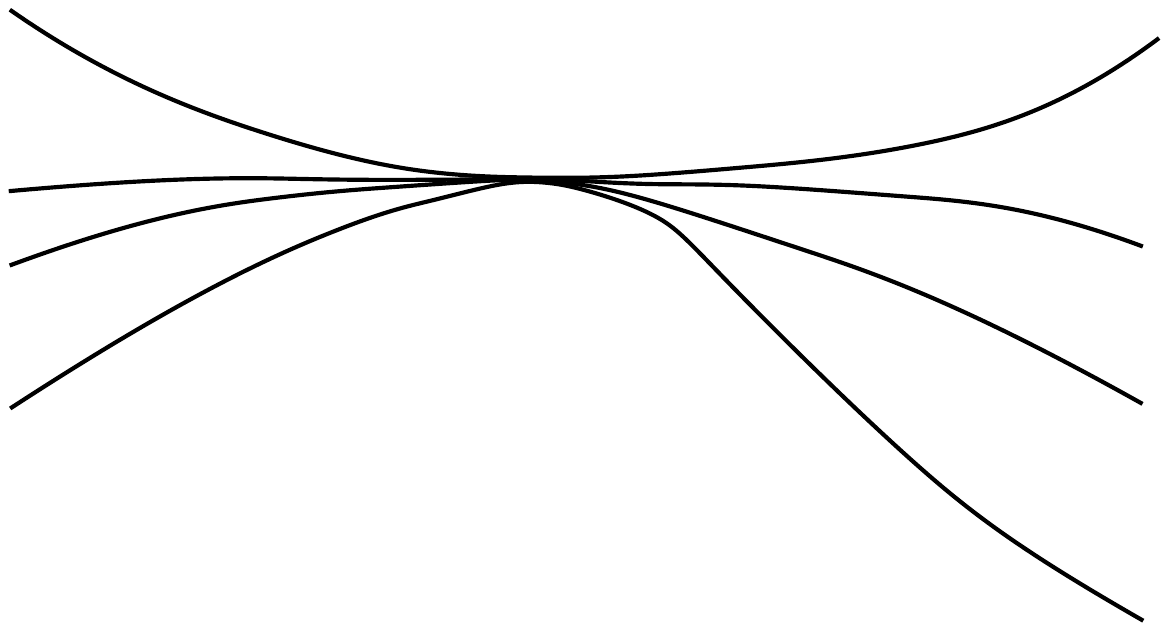}
\begin{picture}(0,0)
\end{picture}
\end{center}
\vspace{-0.4cm}
\caption{{\small When the lamination is not transverse but is strongly non-jointly integrable (picture after projection by $\Pi^s$-holonomy).}\label{f.nontransverse}}
\end{figure}

\begin{proof}
Fix some small $cu$-disk $D_{z_0}$ around $z_0$ which is separated by $\cW^u_{\eps}(z_0)$ and let us consider two points $z,z' \in \cW^s_{loc}(z_0)\setminus \{z_0\}$ close to $z_0$. The topological submanifolds $\Pi^s_{z,z_0}\cW^u_\varepsilon(z)$ and $\Pi^s_{z',z_0}\cW^u_\varepsilon(z')$ also separate $D_{z_0}$: this follows from the fact that the holonomies $\Pi^s_{z,z_0}$ and $\Pi^s_{z',z_0}$ are homeomorphisms from $cu$-disks $D_z$ and $D_{z'}$ to $D_{z_0}$.
Since $\Lambda$ is strongly non-jointly integrable these topological submanifolds do not coincide.

Considering a local transverse orientation, we can talk about being in the upper or lower component of $D_{z_0} \setminus \Pi^s_{y,z_0} \cW^{u}_{\eps}(y)$. Since $\Lambda$ is not $\varepsilon$-$f$-transverse, these submanifolds do not cross but intersect at $z_0$.

One of the three projected submanifolds separates the two other ones. Up to changing the roles of the points we can assume for instance that $\cW^{u}_{\eps}(z_0)$ is between $\Pi^s_{z',z_0}\cW^u_{\eps}(z')$ and $\Pi^s_{z,z_0}\cW^u_{\eps}(z)$ meaning that $\cW^u_{\eps}(z_0)$ is disjoint from the upper component of $D_{z_0} \setminus \Pi^s_{z',z_0} \cW^u_{\eps}(z')$ and from the lower component of $D_{z_0} \setminus \Pi^s_{z,z_0} \cW^u_{\eps}(z)$. 

Let $U$ be a small open neighborhood of $w \in \cW^u_\eps(z_0)$ which is disjoint from the projections $\Pi^s_{z',z_0}\cW^u_{\eps}(z')$ and $\Pi^s_{z,z_0}\cW^u_{\eps}(z)$. Fix $\beta \subset D_{z_0}$ an arc
through the point $z_0$ which intersects $\cW^u_{\eps}(z_0)$,  $\Pi^s_{z',z_0}\cW^u_{\eps}(z')$ and $\Pi^s_{z,z_0}\cW^u_{\eps}(z)$ only in $z_0$ and which is the union of two half arcs, one contained in the upper component of $D_{z_0} \setminus \Pi^s_{z',z_0} \cW^u_{\eps}(z')$ and one contained in  the lower component of $D_{z_0} \setminus \Pi^s_{z,z_0} \cW^u_{\eps}(z)$. For every $y\in U\cap \Lambda$ the plaque $\Pi^s_{y,z_0}\cW^u_{\eps}(y)$ is arbitrarily close to $\cW^u_{\eps}(z_0)$ in the Hausdorff topology, but does not cross $\Pi^s_{z,z_0}\cW^u_{\eps}(z)$ and $\Pi^s_{z',z_0}\cW^u_{\eps}(z')$. Applying \cite[Lemma 7.1]{CPoS} we deduce that $\Pi^s_{y,z_0} \cW^u_{\eps}(y)$ intersects $\beta$ at $z_0$ which is what we wanted to show. 
\end{proof}

We continue with the proof of Proposition~\ref{p.transverse}. We choose an open set $V$ which meets $\Lambda$ and whose closure is contained in $U$ given by Lemma \ref{lem-openset}.
As explained in \S~\ref{ss.localproductstructure}, one can assume that the metric is adapted and introduce:
$$1< \lambda_1 = \min \bigg\{ \frac{\|Df v\|}{\|v\|} \ : \ v \in E^u \bigg\} \leq \lambda_2 =  \max \bigg\{ \frac{\|Df v\|}{\|v\|} \ : \ v \in E^u \bigg\}.$$
Using the $f$-minimality of $\Lambda$, there is $n_0\geq 1$ such that for every $x \in \Lambda$, some iterate $f^{n}(\cW^u_{\lambda_2^{-1}\eps}(x))$ with $0\leq n\leq n_0$ contains a plaque $\cW^u_\eps(\bar x)$ with $\bar x\in V$.
In particular it meets $\cW^s_{loc}(z_0)$ at some point $x'$. Given $\delta>0$ small, by continuity of the local product structure there exists $\rho>0$ such that if $\bar y\in \Lambda$ satisfies $d(\bar x,\bar y)<\rho$ then $\cW^u_\eps(\bar y)$ also meets
$\cW^s_{loc}(z_0)$ and the intersection point $y'$ satisfies $d(x',y')<\delta$.
The number $\delta>0$ is chosen small enough so that if two points $x',y'\in \cW^s_{loc}(z_0)$ satisfy $d(x',y')<\delta$,
then $f^{-n}(y')\in \cW^s_{loc}(f^{-n}(x'))$.

By assumption, there are points $x_0, y_0 \in \Lambda$ that belong to the same local stable manifold. Up to take some iterates, we can assume that the $C^0$-distance between $\cW^{u}_{loc}(x_0)$ and $\cW^{u}_{loc}(y_0)$ is smaller than $\rho$.
We will define inductively points $x_k,y_k\in\Lambda$ which belong to the same local stable manifold and such that
$d(x_k,f(x_{k-1}))<\lambda_2\eps$ and $d(y_k,f(y_{k-1}))<\lambda_2\eps$. Since the stable distance is contracted by forward iterations, these properties imply that the $C^0$-distance between $\cW^{u}_{loc}(x_k)$ and $\cW^{u}_{loc}(y_k)$ remains smaller than $\rho$.

For each $k$ we define: 
 $$ \Omega_k = \{z \in \cW^{u}_{loc}(x_k) \ : \ \cW^{s}_{loc}(z) \cap \cW^u_{loc}(y_k) = \emptyset \}. $$
By strong non-joint integrability, this is an open and dense subset of $ \cW^{u}_{loc}(x_k)$.
Since $\cW^u_{loc}(x_k)\subset f(\cW^u_{loc}(x_{k-1}))$, $\cW^u_{loc}(y_k)\subset f(\cW^u_{loc}(y_{k-1}))$,
and since the distance between $x_k,y_k$ remains small,
we have $\Omega_{k}=f(\Omega_{k-1})\cap \cW^u_{loc}(x_k)$. Note also that by definition $x_k\not\in \Omega_k$.

Assuming that $x_{k}$ has been defined, the construction of $x_{k+1},y_{k+1}$ is done as follows.
Let us consider a ball $B_{k}$ of maximal radius in $\Omega_{k} \cap \cW^u_{\eps}(x_{k})$:
the point $x_{k+1}$ is chosen so that $f^{-1}(x_{k+1})$ belongs to the boundaries of $\Omega_{k}$ and of $B_{k}$.
While the radius $a_{k}$ of $B_{k}$ satisfies $\lambda_2 a_{k} < \eps$ we have
$$\lambda_1 a_{k}<a_{k+1}<\lambda_2 a_{k}.$$
Eventually we reach the size $ \lambda^{-1}_2 \eps$ at some step $\ell$ and stop the construction.

By our choices above, there exists $n\leq n_0$ such that $f^n(B_\ell))$ meets $\cW^s_{loc}(z_0)$ at some point $x'$.
Since $y_\ell$ is $\rho$-close to $x_\ell$, the image $f^n(\cW^s_{loc}(y_\ell))$ also meets $\cW^s_{loc}(z_0)$ at some point $y'$
and $d(x',y')<\delta$. This implies that $\Omega_\ell$ contains a point $f^{-n}(x')$ such that
$\cW^s_{loc}(f^{-n}(x'))$ meets $\cW^u_{loc}(y_{\ell})$, which contradicts the definition of $\Omega_\ell$.

We have thus obtained a contradiction and proved that $\Lambda$ is $\varepsilon$-$f$-transverse.
Since $\varepsilon$ is arbitrary, $\Lambda$ is $f$-transverse. One concludes the proof of the proposition with the following lemma.

\begin{lema}
If $\Lambda$ is an $f$-minimal $f$-transverse lamination, then it is the finite union of minimal laminations and it is transverse.
\end{lema}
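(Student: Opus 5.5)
The plan has two steps. First, $f$-transversality gives finiteness, by an argument of the type of Proposition~\ref{prop-njifinite}. Then finiteness lets us remove the iterates $n_x,n_y$ from the definition, giving transversality.

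\emph{Step 1: finiteness.} Fix a minimal unstable lamination $\Lambda_0\subset\Lambda$. The sets $f^k(\Lambda_0)$ are again minimal unstable laminations. Any two of them are either equal or disjoint, and their union is dense in $\Lambda$ by $f$-minimality.

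Fix $\varepsilon<r_0$ and take $R,N$ from the $\varepsilon$-$f$-transversality. Choose $\delta>0$ much smaller than $\varepsilon$ and than the $C^0$-continuity scale of $f^{-n}$, $n\le N$. I claim that if two of these laminations, say $\Lambda_0$ and $f^k(\Lambda_0)$, contain plaques at distance less than $\delta$, then they intersect.

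To see this, take an $R$-disk in $f^k(\Lambda_0)$ and apply $f$-transversality. This gives points $x',y'$ in the same local stable leaf with $0\le n_x,n_y\le N$, such that $\Pi^s_{x',y'}\cW^u_\varepsilon(x')$ crosses $\cW^u_{loc}(y')$. Crossing is a $C^0$-open condition. Hence any unstable plaque of $\Lambda$ that is $C^0$-close to $\cW^u_\varepsilon(x')$ also projects so as to meet $\cW^u_{loc}(y')$. In other words, the stable leaves of points of that plaque hit the plaque through $y'$.

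This crossing must be turned into an actual intersection of the laminations. I would use it as in Remark~\ref{r.uniqueness-minimal}. Nearby plaques of the two laminations are forced to share stable leaves in a robust way. Iterating forward contracts stable distances, so any two minimal laminations having $\delta$-close plaques accumulate on each other, and by minimality and closedness they coincide.

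By compactness, there are only finitely many pairwise $\delta$-separated laminations. So $f^k(\Lambda_0)=\Lambda_0$ for some $k$, and $\Lambda=\Lambda_0\cup\dots\cup f^{k-1}(\Lambda_0)$.

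\emph{Step 2: transversality.} Let $\tau$ be the period of $\Lambda_0$, so that $f^\tau|_{\Lambda_0}$ preserves $\Lambda_0$, which is minimal. Fix $\varepsilon$. Take one pair $x',y'$ with the crossing property for a slightly smaller $\varepsilon'$. Its crossing persists for all pairs of points in a small neighbourhood $W$ of $(x',y')$ taken along $\Lambda$, where $y''$ is taken in the stable leaf of $x''$.

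Both $x'$ and $y'$ lie in $\Lambda$. By minimality of each component, every unstable disk of radius $R'$ in $\Lambda_0$ (for $R'$ large, uniformly by Lemma~\ref{p.fminimalit}) contains a point of $W$ near $x'$. The stable leaf through that point meets $\Lambda$ near $y'$, because the local product structure and the crossing persist. This produces the required pair $x\neq y$ inside, or within bounded distance of, the disk, with no iterates needed.

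To get $y$ in the same disk, note that $y$ lies on a plaque through a point of $\Lambda$ within $\varepsilon$ of $\cW^u_\varepsilon(x)$. Using the Hausdorff-closeness of plaques, I enlarge $R'$ so that the pair is realised with both points in the given disk; otherwise I fall back on the weaker reading in which $y$ is merely in $\Lambda$. Doing this for every $\varepsilon$ gives transversality.

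\emph{Main obstacle.} The delicate point is the claim in Step 1 that $\delta$-close distinct minimal laminations must intersect. Crossing of projected plaques gives a common stable leaf, but not a common point. I would try to exploit the crossing on both sides together with the argument of \cite[Lemma 7.1]{CPoS}, as in Lemma~\ref{lem-openset}, to trap a plaque of one lamination between plaques of the other. I would then iterate forward so that the trapped plaque converges to the other lamination, yielding a point in the intersection.
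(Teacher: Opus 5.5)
There is a genuine gap, at the point you flag as the main obstacle, and the remedy you propose does not close it. The robust crossing only gives a common local stable leaf between two laminations $A,B$ (in your set-up, iterates such as $f^{n_x}(\Lambda_0)$ and $f^{k+n_y}(\Lambda_0)$). Iterating forward, directly or after trapping a plaque with \cite[Lemma 7.1]{CPoS}, only yields $d(f^n(A),f^n(B))\to 0$. Both laminations move with $n$, so this says nothing about $A\cap B$. Hence the claim that $\delta$-close plaques force an intersection is unproved, and your count of pairwise $\delta$-separated laminations rests entirely on it.

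The missing idea, which is how the paper argues, is to produce the common stable leaf at very negative times, so that pushing forward lands on a \emph{fixed} pair of laminations.
By $f$-minimality, $\bigcup_{i\ge m}f^{-i}(\Lambda_0)$ is dense in $\Lambda$ for every $m$: the decreasing intersection over $m$ of these closures is an $f$-invariant unstable lamination in $\Lambda$, hence equals $\Lambda$. A compactness argument on unstable laminations then gives a uniform window $N$. So for every $m$ there are $i\in[m,m+N]$ and $i'\in[m+N+1,m+2N+1]$ such that $f^{-i}(\Lambda_0)$ has a plaque close to $\cW^u_\eps(x')$ and $f^{-i'}(\Lambda_0)$ has a plaque close to $\cW^u_{loc}(y')$. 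By robustness of the crossing these two plaques share a local stable leaf. Applying $f^{i}$ gives points of $\Lambda_0$ and of $f^{-\ell}(\Lambda_0)$, with $\ell=i'-i\in[1,2N+1]$, at distance tending to $0$ as $m\to\infty$. Fixing $\ell$ along a subsequence, the compact sets $\Lambda_0$ and $f^{-\ell}(\Lambda_0)$ intersect, hence coincide by minimality, which gives periodicity directly. This is also how the argument of Remark~\ref{r.uniqueness-minimal} you invoke works: there a common stable leaf is available for every pair of laminations, in particular for their backward images.

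Step 2 has a further hole. You never show that the crossing pair $x',y'$ lies in a single component $f^j(\Lambda_0)$; without this, a disk in one component cannot be expected to contain both points. Falling back on $y$ merely lying in $\Lambda$ is not the definition of transversality. The fix is the same contraction argument, now with periodicity in hand. If a local stable leaf met both $f^i(\Lambda_0)$ and $f^j(\Lambda_0)$ with $i\not\equiv j$ modulo the period $\tau$, iterating by multiples of $\tau$ would force these two fixed compact laminations to intersect, hence to coincide, which is absurd. So every crossing pair lies in one component. Minimality of that component and robustness of the crossing then place a crossing pair inside every unstable disk of large enough radius in it. To reach the other components, take a crossing at scale $\lambda_2^{-\tau}\eps$, where $\lambda_2$ bounds the expansion along $E^u$, and push it forward by $f^j$, $0\le j<\tau$.
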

\begin{proof}

Let $\Lambda'\subset \Lambda$ be a minimal lamination (it is not invariant if $\Lambda$ is not minimal).
Let us fix $\varepsilon>0$ small so that the $\varepsilon$-$f$-transversality is satisfied.
In particular there exist $x,y\in \Lambda'$ and $0\leq n_x,n_y$ with $f^{n_x}(x)\in \cW^s_{loc}(f^{n_y}(y))$
such that the holonomy $\Pi^s_{x',y'}$ maps $\cW^u_\varepsilon(x')$ into both connected components of $D_y\setminus \cW^u_{loc}(y)$, where $x'=f^{-n_x}(x)$ and $y'=f^{-n_y}(y)$. By $f$-minimality, there exists $N\geq 0$ and
for any $m\geq 0$, there exist $0\leq j,k \leq N$ such that $f^{-m-j}(\Lambda')$ and $f^{-m-k}(\Lambda')$
contain discs respectively close to $f^{-n_x}(\Lambda')$ and $f^{-n_y}(\Lambda')$.
In particular, there exists a sequence $m_k\to\infty$ and $\ell\geq 0$ such that
$f^{-m_k}(\Lambda)$ and $f^{-m_k-\ell}(\Lambda')$ share two points in a same local stable manifold.
Consequently $\Lambda'=f^\ell(\Lambda')$. One deduces that $\Lambda$ is the union of the $\ell$ first iterates of $\Lambda'$.
Choosing $\ell$ minimal, this union is disjoint.

Note that the stable leaves of $\lambda'$ cannot intersect $f^{i}(\Lambda')$ for $1\leq i<\ell$ since otherwise $\Lambda'=f^i(\Lambda')$ contradicting the minimality of $\ell$. The $\eps$-$f$-transversality of $\Lambda$ is thus satisfied with $n_x=n_y$ for any unstable disc. The minimality of each iterate $f^i(\Lambda')$ and the periodic decomposition imply that $n_x,n_y$ can be chosen in $[0,\ell]$. Hence the $\eps$-$f$ transversality is satisfied with $N=0$, provided $R>0$ is chosen large enough. This implies that $\Lambda$ is transverse.
\end{proof}

\subsection{Decomposition of strongly non-jointly integrable laminations}\label{s-njifinite}
We prove here Remark~\ref{r-njifinite}. Let $\Lambda$ be an $f$-minimal lamination which is strongly non-jointly integrable.
Let $\Lambda'\subset \Lambda$ be a minimal lamination. Its backward orbit is dense inside $\Lambda$.
We apply the dichotomy given by Proposition~\ref{prop-lyapunovexponents}.

If $\Lambda$ contains a periodic orbit with negative center Lyapunov exponent, then its local stable manifold intersects large backward iterates of $\Lambda'$. This implies that $\Lambda'$ intersects arbitrarily small neighborhood of the periodic orbit.
Hence the orbit of $\Lambda'$ is finite and $\Lambda$ is the union of these iterates.

If there exists $z_0$ such that $\cW^s_{loc}(z_0)\cap \Lambda$ contains at least three points, then by Proposition~\ref{p.transverse} $\Lambda$ is transverse. We can thus apply Proposition~\ref{prop-njifinite} and conclude that $\Lambda$ is a finite union of minimal laminations.

\subsection{Generic consequence, proof of Corollary \ref{C.QAorTrans}} 
Let us consider a dense G$_\delta$ subset $\cG \subset \PH^1_{c=1}(M)$ such that Proposition \ref{prop-QAgeneric}, Theorem~\ref{t.palis} and Theorem \ref{T.RobustSH} hold.
Let $f \in \cG$ and $\Lambda$ be an $f$-minimal unstable lamination.
Theorem \ref{T.RobustSH} implies that $\Lambda$ is strongly non-jointly integrable.
By Theorem \ref{t.transverse-periodic} if the unstable lamination $\Lambda$ does not contain any periodic point, then it is transverse and the conclusion of Corollary \ref{C.QAorTrans} holds.

We are thus reduced to the case where $\Lambda$ contains a periodic point and by Proposition~\ref{prop-QAgeneric} it is a quasi-attractor. If $\Lambda=M$, then the foliation $\cW^u$ is $f$-minimal. By Proposition~\ref{p.transverse0} the unstable foliation is transverse and the conclusion of Corollary \ref{C.QAorTrans} is satisfied. We can thus now assume that $\Lambda$ is a proper quasi-attractor. By Remark~\ref{r-njifinite}, $\Lambda$ is a union of finitely many disjoint minimal unstable laminations; up to replace $f$ by an iterate, we can thus reduce to the case where $\Lambda$ is a minimal unstable lamination.

We first address the case where $\Lambda$ contains a periodic point $q$ whose center direction is expanded.
Any small disc $D^{cu}_q$ centered at $q$ inside the center-unstable manifold of $q$ is contained in $\Lambda$.
Let us consider some point $z\in \cW^s_{loc}(q)\cap \Lambda$ and let $\varepsilon>0$. By the strong non-joint integrability, $\Pi^s_{q,z}(\cW^u_{\varepsilon}(z))$ is not included inside $\cW^u(q)$, hence it crosses both connected components of $D^{cu}_q\setminus \cW^u_{loc}(y)$ for some $y\in D^{cu}_q$ close to $q$.
By minimality of the lamination $\Lambda$, any unstable leaf inside $\Lambda$ contains subdiscs close to $\cW^u_{\varepsilon}(z)$ and $\cW^u_{loc}(y)$. This shows that $\Lambda$ is $\varepsilon$-transverse. We deduce that $\Lambda$ is transverse and the conclusion of Corollary \ref{C.QAorTrans} is satisfied.
\smallskip

We conclude the proof by considering three cases:

If $\Lambda$ is uniformly hyperbolic and its center bundle is uniformly contracted, it is a proper transitive attractor, and the conclusion of Corollary \ref{C.QAorTrans} holds.

If $\Lambda$ is uniformly hyperbolic and its center bundle is uniformly expanded, $\Lambda$ contains a periodic point whose center direction is uniformly expanded and we conclude by the argument above.

If $\Lambda$ is not uniformly hyperbolic, by Theorem~\ref{t.palis} it contains a periodic point whose center direction is expanded and we conclude as before.
\smallskip

The argument works identically inside $\PH^1_{c=1,\mathrm{vol}}(M)$ because those results are satisfied in that setting too.

\section{Section for the dynamics}\label{s.sectiondynamics}

Here we show Theorem \ref{T.SHorQA}. The proof first reduces to the case where the set is not a quasi-attractor, and uses Proposition \ref{prop-lamtransvcantor} to conclude that $\Lambda$ is transversally Cantor. This will allow us to construct a section for the dynamics and then to apply Theorem \ref{teo-hyp} to obtain the SH property along the minimal unstable lamination $\Lambda$.

\subsection{A section for the dynamics}

We first show the following property:

\begin{prop}\label{p.section}
Let $\Lambda$ be an $f$-minimal unstable lamination which is transversally Cantor. Then, there exists $N\geq 1$ and a compact $f^N$-invariant subset $K \subset \Lambda$ such that $\cW^u(x) \cap K = \{x\}$ for every $x\in K$ and $\cW^u(y) \cap W^s(K) \neq \emptyset$ for every $y \in \Lambda$.  
\end{prop}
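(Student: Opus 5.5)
We will build $K$ as the trace of $\Lambda$ on a finite family of small transverse discs, chosen so that $f^N$ maps this trace back into itself. Since $\Lambda$ is transversally Cantor, for every point $x\in\Lambda$ and every small $\cW^u$-transverse disc $D$ of dimension $d_{cs}$ through $x$, the set $D\cap\Lambda$ is totally disconnected. Hence we can find small clopen pieces of $D\cap\Lambda$. Fix a finite cover of $\Lambda$ by local product boxes of size $r_0$. In each box choose a transverse disc $D_i$ tangent to a cone around $E^s\oplus E^c$, and a clopen subset $C_i\subset D_i\cap\Lambda$ of small diameter. Such a set plays the role of a ``cross-section of finitely many unstable plaques''. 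Then $\bigcup_{x\in C_i}\cW^u_{loc}(x)$ is a compact, relatively open piece of $\Lambda$, and by Lemma~\ref{p.fminimalit} finitely many iterates of such pieces cover $\Lambda$.

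The key step is to make the section invariant. We would rather work with center-stable ``sections'' that are pulled back under $f^{-1}$, since $f^{-1}$ contracts $\cW^u$ and makes transverse discs approximately invariant. Consider a candidate $\Sigma=\bigcup_i C_i$. For $N$ large, $f^{-N}$ maps each $\cW^u_{loc}$-plaque through $C_i$ into a tiny unstable plaque. By $f$-minimality and Lemma~\ref{p.fminimalit}, each such tiny plaque is $\eps$-close, through stable holonomy, to the plaque of some point of $\Sigma$. Define $K$ as the set of points $x\in\Lambda$ whose whole $f^N$-orbit stays in the union of the local plaques $\cW^u_{loc}(C_i)$ and is projected to the same clopen piece at each step. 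Equivalently, $K=\bigcap_{n} f^{nN}(\text{box})$, intersected with a choice of one point per plaque via the local product structure. Taking a nested intersection of the sets $f^{-nN}(\cdots)$ and using uniform contraction of $f^{-N}$ along $\cW^u$, each unstable plaque of $\Lambda$ inside the box contributes a single limit point. This gives $\cW^u(x)\cap K=\{x\}$, after also arranging, by shrinking the boxes, that distinct plaques of the same global leaf are not both selected. Compactness and $f^N$-invariance come from the nested-intersection construction. The covering property $\cW^u(y)\cap W^s(K)\neq\emptyset$ holds because every unstable leaf of $\Lambda$ crosses some box, so a plaque of $\cW^u(y)$ is forward-shadowed by an element of $K$ within the local stable manifold.

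We expect the main obstacle to be ensuring that the selected point per plaque is well defined and compatible with $f^N$. Distinct plaques of one leaf could return to the box, and a single leaf might then meet $K$ twice. To handle this we would use the Cantor transverse structure to take the $C_i$ so small, and the pieces so separated, that the return map of plaques is a well-defined map on a totally disconnected space. This is a kind of symbolic coding in which each plaque is mapped to exactly one plaque. Then $K$ is the graph of a continuous choice over this coding, obtained as the limit of the contracting maps $f^{-N}$ restricted to plaques. This is a graph-transform fixed point argument in the unstable direction. If a leaf still met $K$ twice, both points would have forward orbits staying in corresponding plaques with the same transverse coordinate, and we would contradict the choice of separation.
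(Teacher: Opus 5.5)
Your final picture (a single-valued coding of plaques over the totally disconnected transversal, with $K$ the set of nested backward limits) is essentially what the paper does. However, the step that makes the coding single-valued is the heart of the proof, and your argument for it fails. No shrinking or separation of the pieces $C_i$ can make ``the return map of plaques'' well defined. For the covering property the family of plaques must meet every unstable disc of some fixed radius $R$ in $\Lambda$. But $f^N$ maps a plaque to a disc of radius of order $\lambda^N$, which therefore contains many plaques of the family. So the return is intrinsically multivalued. The set you actually define, the maximal $f^N$-invariant subset of $\bigcup_i\cW^u_{loc}(C_i)$, typically contains several points of one plaque (one per admissible itinerary), and $\cW^u(x)\cap K=\{x\}$ fails. ``Choosing one point per plaque via the local product structure'' would destroy invariance. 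Also, the return must be by containment in the same leaf, not by being $\eps$-close through stable holonomy as in your second paragraph; otherwise there is no $f^N$-orbit to follow.

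The missing idea is a \emph{selection}. The paper takes a single global section $\Sigma$ transverse to $E^u$ with $\partial\Sigma\cap\Lambda=\emptyset$. It takes pairwise disjoint discs $D_x\subset\cW^u_{loc}(x)$, depending continuously on $x\in\Lambda\cap\Sigma$, with $D_x\cap\Sigma=\{x\}$, and $N$ such that every $f^N(D_x)$ contains some $D_y$. Total disconnectedness of $\Lambda\cap\Sigma$ is used exactly to make the choice $y=\phi(x)$ continuous, by gluing local continuous choices along a finite clopen partition. The map $\phi$ is injective because the $D_x$ are disjoint and $f^N$ is injective. Restricting to $\cA=\bigcap_k\phi^k(\Lambda\cap\Sigma)$ makes $\phi$ a homeomorphism. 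Then $K$ is the maximal $f^N$-invariant subset of $\cB=\bigcup_{x\in\cA}D_{\phi(x)}$, the union of \emph{selected} discs only. If $z\in D_x$ and $f^N(z)\in D_{\phi(y)}\subset f^N(D_y)$, then $z\in D_y$, so $y=x$. Hence a point of $K\cap D_x$ is forced into $D_{\phi(x)}$. Two points of $K$ on one leaf (after backward iteration, in one disc) therefore stay in a common disc forever, and forward expansion along $\cW^u$ makes them coincide. Finally, the covering property only yields $d(f^{kN}(z),K)\to 0$ for a point $z$ of the leaf whose orbit follows the selected discs. Your claim that $z$ lies on a local stable manifold of a point of $K$ is neither proved nor needed.
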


We emphasize that $W^s(K)$ here does not denote the union of the strong stable manifolds through points of $K$ but the stable set:
$$W^s(K) = \{ z \in M \ : \ d(f^{kN}(z), f^{kN}K) \to 0 \ \text{ as } k \to +\infty\}.$$
In particular $\cW^u(y)\cap W^s(K) \neq \emptyset$ means that for every $\eps>0$ there is $k_0$ (which does not depend on $y$) and $z \in \cW^u_{loc}(y)$ such that $d(f^{kN}(z), K) < \eps$ for every $k>k_0$. 

\begin{proof}
Consider a submanifold with boundary $\Sigma$ transverse to $E^u$ which is a global section of $\Lambda$ and such that $\partial \Sigma \cap \Lambda = \emptyset$: it exists since $\Lambda$ is totally disconnected along the center-stable direction. Choose, for each $x \in \Lambda \cap \Sigma$ a disk $D_x \subset \cW^u_{loc}(x)$ centered at $x$ varying continuously with $x$  and such that $D_x\cap \Sigma=\{x\}$. Since $f$ expands $\cW^u$ leaves, there is some $N>0$ such that for every $x \in \Lambda \cap \Sigma$ the image $f^N(D_x)$ contains some $D_y$, $y \in \Lambda \cap \Sigma$. 
Since $\Lambda \cap \Sigma$ is totally disconnected and since the discs $D_x$ are pairwise disjoint, there exists a continuous injective function $\phi: \Lambda \cap \Sigma \to \Lambda \cap \Sigma$,  such that:  
$$ f^{N} (D_x) \supset D_{\phi(x)} . $$
 
Let $\cA = \bigcap_{k > 0} \phi^k (\Lambda \cap \Sigma)$: it is compact and non-empty. The restriction $\phi|_\cA$ is a homeomorphism of $\cA$: we have to check that $\phi\colon \cA\to \cA$ is surjective; as the intersection $\bigcap_{k > 0} \phi^k (\Lambda \cap \Sigma)$ is decreasing, it coincides with $\bigcap_{k > 1} \phi^k (\Lambda \cap \Sigma)$, proving that $\cA=\phi(\cA)$.

Now, let us consider $\cB = \bigcup_{x \in \cA} D_{\phi(x)}$. It is backward $f^N$-invariant since
for each $x\in \cA$ there is $y \in \cA$ such that $\phi(y)=x$ and since $f^{-N}(D_{\phi(y)}) \subset D_y$.  So, we can introduce the compact $f^N$-invariant set $ K = \bigcap_{k>0} f^{-kN}(\cB)$.

Let us see that $K$ has the desired properties. If $y \in \Lambda$, its unstable manifold intersects $\Sigma \cap \Lambda$ and contains a disc $D_x$. Therefore $\cW^u(y)$ contains a point $z$ whose forward iterates under $f^N$ belong to $\cB$.
For $\ell\geq 1$,, the iterate $f^{\ell N}(z)$ belongs to $\bigcap_{\ell\geq k>0} f^{-kN}(\cB)$ which is contained in an arbitrarily small neighborhood of $K$ when $\ell$ is large enough. Hence $z$ belongs to $W^s(K)$.

Finally, let us consider two points $z,z'\in K$ which belong to a same unstable manifold.
Up to replacing them by backward iterates, we may assume that they belong to a same disc $D_x$, $x\in \cA$.
By definition of $K$ and injectivity of $\phi$, their forward iterates $f^{kN}(z)$ and $f^{kN}(z')$ belong to a same disc $D_{x_k}$.
Since unstable leaves are uniformly expanded, we conclude that $z=z'$, proving that each unstable leaf intersects $K$ in at most one point.\end{proof}

\subsection{SH property, proof of Theorem \ref{T.SHorQA}} 
We explain the proof in the case of $\PH^1_{c=1}(M)$ as the proof in the $\PH^1_{c=1,vol}(M)$ is identical given that we have already proved the used results in both settings. 
We consider a dense G$_\delta$ subset $\cG\subset \PH^1_{c=1}(M)$ such that any iterate of diffeomorphisms $f\in \cG$ satisfies Proposition \ref{prop-QAgeneric}, Corollary~\ref{c.palis} and Theorems \ref{teo-hyp} and \ref{T.RobustSH}. Let $f \in \cG$ and let $\Lambda$ be a $f$-minimal unstable lamination. By Theorem~\ref{T.RobustSH}, $\Lambda$ is strongly non-jointly integrable.
We consider different cases.
\smallskip

If $\Lambda$ is uniformly hyperbolic, either $E^c|_\Lambda$ is uniformly expanded and the property SH holds on $\Lambda$, or $E^c|_\Lambda$ is uniformly contracted, $\Lambda$ is a uniformly hyperbolic set saturated by its unstable manifolds and is an attractor. This concludes the proof of the theorem in this case.

If $\Lambda$ is a quasi-attractor and not uniformly hyperbolic, by Corollary~\ref{c.palis} and the $f$-minimality of $\Lambda$, there exists a uniformly hyperbolic set $K\subset \Lambda$ such that $E^c|_K$ is uniformly expanded and there exists $N\geq 1$ such that for any $y\in \Lambda$, the unstable leaf $f^N\cW^u_{loc}(y)$ intersects some $\cW^s_{loc}(x)$, $x\in K$ at some point $z$.
The center space $E^c(z)$ is uniformly expanded under forward iterations. This proves that Property SH holds on $\Lambda$.

If $\Lambda$ contains a periodic point and is not uniformly hyperbolic, Proposition~\ref{prop-QAgeneric} shows that it is a quasi-attractor and the property SH holds as in the previous case.

If $\Lambda$ is not transversally Cantor and is not uniformly hyperbolic, by Proposition~\ref{prop-lamtransvcantor} there exists a non-empty open set of points $x$ satisfying $\omega(x)\subset \Lambda$. By Proposition~\ref{prop-QAgeneric} the set $\Lambda$ is a quasi-attractor and satisfies the property SH as explained before.
\smallskip

It remains to consider the case where $\Lambda$ is not uniformly hyperbolic, does not contain a periodic point, and is transversally Cantor. We first apply Proposition~\ref{p.section}: there is an $f^N$-invariant set $K \subset \Lambda$ such that $\cW^u(y) \cap W^s(K) \neq \emptyset$ for every $y \in \Lambda$. Moreover every unstable leaf intersects $K$ in at most one point, so that, by Theorem \ref{teo-hyp}, the set $K$ is hyperbolic for $f^N$. Note that the center direction is not uniformly contracting: by the shadowing lemma, there would exist a periodic orbit contained in a small neighborhood of $K$ so that its stable manifold would intersect a $\cW^u$-leaf of $\Lambda$; this would imply that $\Lambda$ contains a periodic point, 
and contradict our assumption. So the bundle $E^c$ is uniformly expanding on $K$. 

By Proposition~\ref{p.section}, there exists $k_0\geq 0$ such that for every $y\in \Lambda$ there exists
$z\in \cW^u_{loc}(y)$ whose forward iterates $f^{kN}(z)$, $k\geq k_0$, are all contained in a small neighborhood of $K$; in particular the center bundle at $z$ is uniformly expanded under forward iterations.
This shows that $\Lambda$ verifies the SH property and completes the proof of the theorem.


\bigskip

\noindent
\emph{Sylvain Crovisier}\\
{CNRS -- Laboratoire de Math\'ematiques d'Orsay (UMR 8628),\\
Universit\'e Paris-Saclay, 91405 Orsay, France.}\\
\url{www.imo.universite-paris-saclay.fr/~sylvain.crovisier/}\\
\texttt{sylvain.crovisier@universite-paris-saclay.fr}

\bigskip

\noindent
\emph{Rafael Potrie}\\
{CMAT, Facultad de Ciencias, Universidad de la Rep\'ublica, Uruguay, and},\\
{CNRS -- Laboratorio del Plata (IRL IFUMI-2030)}\\
\url{www.cmat.edu.uy/~rpotrie/}\\
\texttt{rpotrie@cmat.edu.uy}
\bigskip


\begin{thebibliography}{2}


\bibitem[ABC]{ABC} F. Abdenur, C. Bonatti, S. Crovisier,  Non-uniform hyperbolicity for $C^1$-generic diffeomorphisms. \emph{Israel J. of Math.} {\bf 183} (2011), 1--60.

\bibitem[AC]{AC} F. Abdenur, S. Crovisier,  Transitivity and topological mixing for $C^1$ diffeomorphisms.
In \emph{Essays in Mathematics and its Applications: In Honor of Stephen Smale's 80th Birthday.} Springer (2012), 1--16.

\bibitem[ALOS]{ALOS} S. Alvarez, M. Leguil, D. Obata, B. Santiago, Rigidity of u-Gibbs measures near conservative Anosov diffeomorphisms on $\mathbb{T}^3$. \emph{Journal of the EMS} \textbf{28} (2026), 2659--2750.

\bibitem[A]{Avila} A. Avila, On the regularization of conservative maps. \emph{Acta Math.}  \textbf{205} (2010), 5--18. 

\bibitem[ACEPWZ]{ACEPWZ} A. Avila, S. Crovisier, A. Eskin, R. Potrie, A. Wilkinson, Z. Zhang.
\emph{Unstable foliation dynamics in dimension 3}. In preparation.

\bibitem[ACW$_1$]{ACW-blender} A. Avila, S. Crovisier, A. Wilkinson, $C^1$-density of stable ergodicity. \emph{Adv. Math.} {\bf 379} (2021), paper No. 107496, 68 pp.

\bibitem[ACW$_2$]{ACW} A. Avila, S. Crovisier, A. Wilkinson, Minimality of strong foliations of Anosov and partially hyperbolic diffeomorphisms. ArXiv:2504.01085.


\bibitem[BC$_1$]{BC-rec} C. Bonatti, S. Crovisier, R\'ecurrence et g\'en\'ericit\'e. \emph{Inventiones Math.} {\bf 158} (2004), 33--104.

\bibitem[BC$_2$]{BC-Center} C. Bonatti, S. Crovisier, Center manifolds for partially hyperbolic set without strong unstable connections. \emph{Journal of the IMJ} {\bf15} (2016), 785--828.

\bibitem[BCGP]{BCGP} C. Bonatti, S. Crovisier, N. Gourmelon, R. Potrie,  Tame dynamics and robust transitivity: chain-recurrence classes versus homoclinic classes. \emph{Transactions of the AMS} {\bf 366} (2014), 4849--4871.

\bibitem[BDP]{BDP} C. Bonatti, L. Diaz, E. Pujals, A $C^1$-generic dichotomy for diffeomorphisms: weak forms of hyperbolicity or infinitely many sinks or sources. \emph{Ann. of Math.} {\bf 158} (2003), 355--418.


\bibitem[BDU]{BDU} C. Bonatti, L. Diaz, R. Ures, Minimality of strong stable and unstable foliations for partially hyperbolic diffeomorphisms. \emph{J. Inst. Math. Jussieu} {\bf 1} (2002), 513--541.

\bibitem[BDV]{BDV} C. Bonatti, L. D\'iaz and M. Viana, \emph{Dynamics beyond uniform hyperbolicity. A global geometric and probabilistic perspective}. Encyclopaedia of Mathematical Sciences {\bf 102}. Mathematical Physics III. Springer-Verlag (2005).

\bibitem[BGHP]{BGHP} C. Bonatti, A. Gogolev, A. Hammerlindl, R. Potrie,  Anomalous partially hyperbolic diffeomorphisms III: Abundance and incoherence. \emph{Geom. Topol.} {\bf 24} (2020), 1751--1790.

\bibitem[BMVW]{BMVW} C. Bonatti, C. Matheus, M. Viana, A. Wilkinson,  Abundance of stable ergodicity. \emph{Comment. Math. Helv.} {\bf 79} (2004), 753--757.


\bibitem[C]{Crov-habilitation} S. Crovisier, Perturbation de la dynamique de diff\'eomorphismes en topologie $C^1$. \emph{Ast\'erisque} \textbf{354} (2013). 


\bibitem[CPo$_1$]{CroPot} S. Crovisier, R. Potrie, Introduction to partially hyperbolic dynamics,  \emph{Lecture notes for School on Dynamical Systems}, ICTP, Trieste (2015). \emph{Unpublished.}

\bibitem[CPo$_2$]{CPo} S. Crovisier, R. Potrie, Attractors for derived from Anosov diffeomorphisms. \emph{Manuscript.}

\bibitem[CPoS]{CPoS} S. Crovisier, R. Potrie, M. Sambarino, Finiteness of partially hyperbolic attractors with one-dimensional center.  \emph{Ann. Sci. \'Ec. Norm. Sup\'er.} {\bf 53} (2020), 559--588.

\bibitem[CPu]{CP} S. Crovisier, E. Pujals, Essential hyperbolicity and homoclinic bifurcations: a dichotomy phenomenon/mechanism for diffeomorphisms. \emph{Inventiones Mathematicae} {\bf 201} (2015), 385--517. 

\bibitem[CPuS]{CPS} S. Crovisier, E. Pujals, M. Sambarino, Hyperbolicity of the extreme bundles. \emph{In preparation}.

\bibitem[EPZ]{EPZ} A. Eskin, R. Potrie, Z. Zhang, Geometric properties of partially hyperbolic measures and applications to measure rigidity. ArXiv:2302.12981.

\bibitem[HT]{HT} A. Herrera, A. Tercia, Robust transitivity and density of periodic points of partially hyperbolic diffeomorphisms. ArXiv:1403.3979.

\bibitem[HHU]{HHU} J. Rodriguez Hertz, F. Rodriguez Hertz, R. Ures,  Some results on the integrability of the center bundle for partially hyperbolic diffeomorphisms. \emph{Fields Inst. Commun.} \textbf{51}.
American Mathematical Society, Providence, RI (2007) 103--109.


\bibitem[HUY]{HUY} J. Rodriguez Hertz, R. Ures, J. Yang, Robust minimality of strong foliations for DA diffeomorphisms: cu-volume expansion and new examples. \emph{Trans. Amer. Math. Soc.} \textbf{375} (2022), 4333--4367.

\bibitem[HPS]{HPS} M. Hirsch, C. Pugh and M. Shub, Invariant Manifolds.  \emph{Lecture Notes in Math.} {\bf 583} (1977).

\bibitem[KH]{KH} A. Katok, B. Hasselblatt, Introduction to the modern theory of dynamical systems.
\emph{Encyclopedia Math. Appl.} \textbf{54}
Cambridge University Press, Cambridge, (1995).

\bibitem[Ka]{Katz} A. Katz, Measure rigidity of Anosov flows via the factorization method. \emph{Geom. Funct. Anal.} {\bf 33} (2023), 468--540.

\bibitem[PuSa$_1$]{PS-SH} E. Pujals, M. Sambarino, A sufficient condition for robustly minimal foliations. \emph{Ergodic Theory Dynam. Systems} {\bf 26} (2006), 281--289.

\bibitem[PuSa$_2$]{PS-IHP} E. Pujals, M. Sambarino, Density of hyperbolicity and tangencies in sectional dissipative regions. \emph{Ann. Inst. H. Poincar\'e C Anal. Non Lin\'eaire} {\bf 26} (2009), 1971--2000.



\bibitem[Ya]{JYang} J. Yang, Entropy along expanding foliations. \emph{Advances in Math.}  {\bf 389} (2021) 107893.

\end{thebibliography}
\end{document}